\documentclass[12pt]{amsart}
\usepackage{graphicx}
\usepackage{amsmath, amscd, amssymb, amsthm}
\usepackage[subnum]{cases}
\usepackage{changepage}
\usepackage{xcolor}
\usepackage{marginnote}
\usepackage{hyperref}
\usepackage{cleveref}
\usepackage[all]{xy}
\usepackage{tabularx}
\usepackage{ltablex}
\usepackage{longtable}
\usepackage[T1]{fontenc}

\newtheorem{theorem}{Theorem}[section]

\newtheorem{proposition}[theorem]{Proposition}
\newtheorem{corollary}[theorem]{Corollary}

\theoremstyle{definition}
\newtheorem{definition}[theorem]{Definition}
\newtheorem{remark}[theorem]{Remark}
\numberwithin{equation}{section}

\newtheorem{assumption}[theorem]{Assumption}
\newtheorem{setting}[theorem]{Setting}

\setlength\textwidth{6.5in}
\setlength\textheight{8.6in}

\addtolength{\hoffset}{-1.8cm}
\addtolength{\voffset}{-1cm}

\usepackage{fancyhdr}
\pagestyle{empty}
\lhead{}
\rhead{}
\rfoot{}
\cfoot{}

\begin{document}

\normalfont

\title{Analytic Geometry and Hodge-Frobenius Structure Continued}
\author{Xin Tong}

\maketitle

\begin{abstract}
\rm This is our sequel to our previous work on the corresponding generalized Frobenius modules over some big multivariate Robba rings. We will go beyond our previous discussion where we focused on the corresponding analytic functions on polydiscs and polyannuli in the strictly affinoid situation, and general Hodge-Frobenius structures which are admissible in the corresponding context in our previous work. 
\end{abstract}

\newpage

\tableofcontents

\newpage

\section{Introduction}

\subsection{Introduction and Summary}

\noindent Generally speaking analytic geometry studies very general manifolds or varieties which are locally related to analytic functions of several variables. In complex analytic geometry, the special domains (like those classical domains) form a very important subject in the corresponding understanding of higher dimensional analytic geometry and their application to other subjects such as number theory. One definitely would like to understand the corresponding story in nonarchimedean geometry, namely what are the corresponding significant special domains. Another issue is that if we could have the chance to find the links with other geometries, such as formal ones and pure algebraic geometry. In the study of analytic geometry, certainly not only the analytic methods will be definitely applied, but also we have algebraic ones. Algebraic ones certainly work well in some very significant cases, for instance those in the Stein cases in classical complex geometry, quasi-Stein cases within the generality of Kedlaya-Liu \cite{KL2} and Fr\'echet-Stein cases after Schneider-Teitelbaum \cite{ST}.\\

\indent While the corresponding nonarchimedean analytic geometry has its own very significant geometric rights, many useful applications also exist extensively as well. For instance in \cite{T1} we followed \cite{CKZ} and \cite{PZ} to have investigated some relative cohomologies corresponding to very specific multi Frobenius structures. The picture in \cite{T1} was a very practical first step towards more general pictures and more interesting pictures.\\

\indent The tools in \cite{T1} extend to our current paper since in more general framework of geometry as in \cite{KL1} and \cite{KL2} the methods and ideas could be applied directly to our situation with possible suitable more detailization. For instance in our situation we could consider nonnoetherian base rings. Note that this does not mean they are necessarily preperfectoid or perfect, since in our situation we can consider some very interesting mixed-type Robba rings in very general sense. For instance if we have two variables, we could have one living in some preperfectoid part while the other one living in the some rigid analytic affinoid component. Note that we do not have to maintain in the corresponding strictly analytic situation, especially if one would like to work with Kedlaya's reified adic spaces \cite{Ked1}.\\

\indent We will also consider the corresponding generalized $B$-pairs after Berger and Nakamura. However for some deep study we could introduce some mixed-type Hodge structures. Namely we could introduce $(\varphi,\Gamma)$-$B$-objects. For instance in the situation where we have two factors we can defined the corresponding period rings:
\begin{displaymath}
\mathrm{B}_{\mathrm{dR},\mathbb{Q}_p}\widehat{\otimes}_{\mathbb{Q}_p}\Pi_{\mathrm{an},\mathrm{con},A},\mathrm{B}^+_{\mathrm{dR},\mathbb{Q}_p}\widehat{\otimes}_{\mathbb{Q}_p}\Pi_{\mathrm{an},\mathrm{con},A}	
\end{displaymath}
with finite projective objects defined over them, carrying one partial action from the Galois groups and carrying one partial action from the corresponding $(\varphi,\Gamma)$ operators.\\

\indent To make summary, we have:\\

\noindent A. Over really general Banach rings, we have defined many useful mixed type Robba rings by perfection along some partial variables. In fact we also have defined some $\infty$-period rings following the recent work from Bambozzi-Kremnizer \cite{BK}. The corresponding issue we encountered is certainly some common issue around sheafiness in adic geometry in some very general classical sense. These directly will be expected to give the chance for one to compare Hodge structures over multivariate imperfect Robba rings with the corresponding Hodge structures over multivariate perfect Robba rings as those with accents $\breve{.}$ and $\widetilde{.}$ in \cite{KL2}, see sections from 2-3. \\

\noindent B. Over really general Banach rings, we defined many useful  $(\varphi_I,\Gamma_I)$-modules over the mixed type Robba rings by perfection along some partial variables. This directly gives the chance for one to compare multivariate $(\varphi_I,\Gamma_I)$-modules over partially imperfect Robba rings with the corresponding multivariate $(\varphi_I,\Gamma_I)$-modules over partially perfect Robba rings as those with accents $\breve{.}$ and $\widetilde{.}$ in \cite{KL2}, see section 4.\\

\noindent C. Over really general Banach rings over $\mathbb{Q}_p$, we defined many useful mixed type big period rings by taking product with Fontaine's de Rham period ring along some partial variables. First of all, we then immediately have the definition for some $B_I'$-$(\varphi_I,\Gamma_I)$-modules. Therefore relying on these rings (although they are very interesting in their rights) proved the equivalence between multivariate $B_I'$-pairs and multivariate $(\varphi_I,\Gamma_I)$-modules generalizing Berger's work and \cite{KP}, see section 5.\\

\noindent D. In sections 6-8, we defined the corresponding $B_I'$-$(\varphi_I,\Gamma_I)$-cohomologies for $B_I'$-$(\varphi_I,\Gamma_I)$-modules, and we promote the equivalence to some quasi-isomorphisms within the derived category $\mathrm{D}^\flat(A)$, where $A$ is the base Banach relative ring. \\

\subsection{Comments on the Notation}

Our notations on the corresponding multivariate mixed type big Hodge structures are inspired by essentially some Langlands programs in $\ell$-adic situation such as in \cite{VL}, namely the Drinfeld's Lemma. However the work \cite{PZ} ad \cite{CKZ} use $\Delta$ which is inspired by essentially some Langlands programs in $p$-adic situation such as that rooted in the work of Z\'abr\'adi and is related to reductive datum. We remind the readers that these are actually not the corresponding intervals for the Robba rings in order to eliminate the corresponding possible confusion. \\

\subsection{Future Study}

\indent The current geometric discussion covered in this paper is basically around the commutative analytic geometry. We have not made it to add the corresponding discussion on the noncommutative setting, but this will be pushed to our further study. Certainly the corresponding noncommutative deformation will require some further well-established understanding on the foundational issues, such as the corresponding noncommutative descent as in the commutative situation from \cite{KL1} and \cite{KL2}. Definitely any good understanding on the noncommutative settings of these sorts will be essential to the corresponding good understanding on noncommutative analytic geometry and noncommutative Tamagawa number conjectures after \cite{FK1}, \cite{BF1} and \cite{BF2}.  \\

\indent Since our dreams will be really those where we can handle very general analytic spaces. Certainly adic spaces need extensively restrictive requirement on the sheafiness of the Banach rings. However this might be resolved completely by considering \cite{BK} (or possibly equivalently the work of Clausen-Scholze \cite{CS}). We have already defined many interesting $\infty$-analytic stacks and the $\infty$-Robba rings. We will study Kedlaya-Liu glueing on this level after \cite{KL1} and \cite{KL2} in future work.\\

\indent We have discussed the corresponding higher dimensional $B$-pairs by introducing the corresponding higher dimensional de Rham period rings. We certainly hope to amplify the corresponding discussion in more $p$-adic Hodge theoretic sense. However one could definitely maintain in the world of $(\varphi_I,\Gamma_I)$-modules in some very flexible way, literally after Berger \cite{Ber2}. We believe that we have more rigidity in the current context. In fact, we should mention that this higher dimensionalization of \cite{Ber2} is literally motivated by the work \cite{KL1} and \cite{KL2}, as well as the work \cite{CKZ} and \cite{PZ}.

\newpage

\section{Big Robba Rings over Rigid Analytic Affinoids and Fr\'echet Objects in Mixed-characteristic Case}

\subsection{Big Robba Rings over Rigid Analytic Affinoids}

\noindent We follow \cite{T1} to give the thorough definition and discussion of those very big period rings we will need in the further discussions in the following body of the paper, where we consider as in \cite{T1} a finite set $I$ with some subset $J$. Our Robba rings in this paper will be depending on the $I$ and $J$ simultaneously.

\begin{definition}
Let $A$ be any affinoid algebra over $\mathbb{Q}_p$ in rigid analytic geometry. We consider the corresponding multi intervals $[\omega^{r_I},\omega^{s_I}]$. Recall we have the corresponding Robba rings defined in \cite[Definition 2.4]{T1}:
\begin{displaymath}
\Pi_{[s_I,r_I],I,A}	
\end{displaymath}
which is defined to be the corresponding affinoid:
\begin{displaymath}
A\widehat{\otimes}_{\mathbb{Q}_p}\mathbb{Q}_p\{\omega^{r_1}/T_1,...,\omega^{r_I}/T_I,T_1/\omega^{s_1},...,T_I/\omega^{s_I}\}.	
\end{displaymath}
Then we have the corresponding rings:
\begin{displaymath}
\Pi_{\mathrm{an},r_I,I,A}:= \varprojlim_{s_I} \Pi_{[s_I,r_I],I,A}.			
\end{displaymath}
with 
\begin{displaymath}
\Pi_{\mathrm{an,con},I,A}:= \bigcup_{r_I}\varprojlim_{s_I} \Pi_{[s_I,r_I],I,A}.			
\end{displaymath}
However we will in this paper to consider some more complicated version of the rings. We will use some partial Frobenius to perfectize partially the rings defined above. Therefore we will in some more uniform way to denote the rings in the following different way:
\begin{align}
\Pi_{[s_I,r_I],I,I,\emptyset,A}:=\Pi_{[s_I,r_I],I,A}\\
\Pi_{\mathrm{an},r_I,I,I,\emptyset,A}:=\Pi_{\mathrm{an},r_I,I,A}\\
\Pi_{\mathrm{an,con},I,I,\emptyset,A}:=\Pi_{\mathrm{an,con},I,\emptyset,A}.	
\end{align}
	
\end{definition}

\indent Now we follow the idea in \cite[Definition 5.2.1]{Ked2} to define some extended version of the rings. We will have the following rings to be:

\begin{align}
\Pi_{[s_I,r_I],I,\breve{J},I\backslash J,A},\\	
\Pi_{[s_I,r_I],I,\widetilde{J},I\backslash J,A},\\
\Pi_{[s_I,r_I],I,J,\breve{I\backslash J},A},\\	
\Pi_{[s_I,r_I],I,\breve{J},\breve{I\backslash J},A},\\	
\Pi_{[s_I,r_I],I,\widetilde{J},\breve{I\backslash J},A},\\
\Pi_{[s_I,r_I],I,J,\widetilde{I\backslash J},A},\\	
\Pi_{[s_I,r_I],I,\breve{J},\widetilde{I\backslash J},A},\\	
\Pi_{[s_I,r_I],I,\widetilde{J},\widetilde{I\backslash J},A}.	
\end{align}

and 

\begin{align}
\Pi_{\mathrm{an},r_I,I,\breve{J},I\backslash J,A},\\	
\Pi_{\mathrm{an},r_I,I,\widetilde{J},I\backslash J,A},\\
\Pi_{\mathrm{an},r_I,I,J,\breve{I\backslash J},A},\\	
\Pi_{\mathrm{an},r_I,I,\breve{J},\breve{I\backslash J},A},\\	
\Pi_{\mathrm{an},r_I,I,\widetilde{J},\breve{I\backslash J},A},\\
\Pi_{\mathrm{an},r_I,I,J,\widetilde{I\backslash J},A},\\	
\Pi_{\mathrm{an},r_I,I,\breve{J},\widetilde{I\backslash J},A},\\	
\Pi_{\mathrm{an},r_I,I,\widetilde{J},\widetilde{I\backslash J},A}.	
\end{align}

and 

\begin{align}
\Pi_{\mathrm{an},\mathrm{con},I,\breve{J},I\backslash J,A},\\	
\Pi_{\mathrm{an},\mathrm{con},I,\widetilde{J},I\backslash J,A},\\
\Pi_{\mathrm{an},\mathrm{con},I,J,\breve{I\backslash J},A},\\	
\Pi_{\mathrm{an},\mathrm{con},I,\breve{J},\breve{I\backslash J},A},\\
\Pi_{\mathrm{an},\mathrm{con},I,\widetilde{J},\breve{I\backslash J},A},\\
\Pi_{\mathrm{an},\mathrm{con},I,J,\widetilde{I\backslash J},A},\\	
\Pi_{\mathrm{an},\mathrm{con},I,\breve{J},\widetilde{I\backslash J},A},\\	
\Pi_{\mathrm{an},\mathrm{con},I,\widetilde{J},\widetilde{I\backslash J},A}.	
\end{align}

\begin{definition}\mbox{\bf{(After Kedlaya-Liu, \cite[Definition 5.2.1]{KL2})}}
We first define the corresponding first group of the rings. The corresponding rings in groups as mentioned above are defined by using the corresponding partial Frobenius $\varphi_1,...,\varphi_I$ and the corresponding Fr\'echet completion.	For the ring $\Pi_{[s_I,r_I],I,\breve{J},I\backslash J,A}$, this is defined by:
\begin{align}
\Pi_{[s_I,r_I],I,\breve{J},I\backslash J,A}:=\varinjlim_{n_\alpha\geq 0,\alpha  \in J}\prod_{\alpha\in J}\varphi_\alpha^{n_\alpha}\Pi_{[s_I,r_I],I,{J},I\backslash J,A}.
\end{align}
Note that for the corresponding rings getting involved in the corresponding definition above we consider the corresponding various Fr\'echet norms for each $t_I>0$:
\begin{displaymath}
\|.\|_{\prod_{\alpha\in J}\varphi_\alpha^{n_\alpha}\Pi_{[s_I,r_I],I,{J},I\backslash J,A},t_I}	
\end{displaymath}
Then we define the corresponding ring $\Pi_{[s_I,r_I],I,\widetilde{J},I\backslash J,A}$, this is defined by the following Fr\'echet completion process:
\begin{align}
\Pi_{[s_I,r_I],I,\widetilde{J},I\backslash J,A}:=\left(\varinjlim_{n_\alpha\geq 0,\alpha  \in J}\prod_{\alpha\in J}\varphi_\alpha^{n_\alpha}\Pi_{[s_I,r_I],I,{J},I\backslash J,A}\right)^\wedge_{\|.\|_{\prod_{\alpha\in J}\varphi_\alpha^{n_\alpha}\Pi_{[s_I,r_I],I,{J},I\backslash J,A},t_I},t_I\in [s_I,r_I]}.
\end{align}

\end{definition}


\indent Then in the corresponding symmetric way we have the following definition:

\begin{definition}\mbox{\bf{(After Kedlaya-Liu, \cite[Definition 5.2.1]{KL2})}}
For the ring $\Pi_{[s_I,r_I],I,{J},\breve{I\backslash J},A}$, this is defined by:
\begin{align}
\Pi_{[s_I,r_I],I,{J},\breve{I\backslash J},A}:=\varinjlim_{n_\alpha\geq 0,\alpha  \in I\backslash J}\prod_{\alpha\in I\backslash J}\varphi_\alpha^{n_\alpha}\Pi_{[s_I,r_I],I,{J},I\backslash J,A}.
\end{align}
Note that for the corresponding rings getting involved in the corresponding definition above we consider the corresponding various Fr\'echet norms for each $t_I>0$:
\begin{displaymath}
\|.\|_{\prod_{\alpha\in I\backslash J}\varphi_\alpha^{n_\alpha}\Pi_{[s_I,r_I],I,{J},I\backslash J,A},t_I}	
\end{displaymath}
Then we define the corresponding ring $\Pi_{[s_I,r_I],I,{J},\widetilde{I\backslash J},A}$, this is defined by the following Fr\'echet completion process:
\begin{align}
\Pi_{[s_I,r_I],I,{J},\widetilde{I\backslash J},A}:=\left(\varinjlim_{n_\alpha\geq 0,\alpha  \in I\backslash J}\prod_{\alpha\in I\backslash J}\varphi_\alpha^{n_\alpha}\Pi_{[s_I,r_I],I,\breve{J},\breve{I\backslash J},A}\right)^\wedge_{\|.\|_{\prod_{\alpha\in I}\varphi_\alpha^{n_\alpha}\Pi_{[s_I,r_I],I,{J},I\backslash J,A},t_I},t_I\in [s_I,r_I]}.
\end{align}

\end{definition}

\indent Then we do the following one:

\begin{definition}\mbox{\bf{(After Kedlaya-Liu, \cite[Definition 5.2.1]{KL2})}}
For the ring $\Pi_{[s_I,r_I],I,\breve{J},\breve{I\backslash J},A}$, this is defined by:
\begin{align}
\Pi_{[s_I,r_I],I,\breve{J},\breve{I\backslash J},A}:=\varinjlim_{n_\alpha\geq 0,\alpha  \in I}\prod_{\alpha\in I}\varphi_\alpha^{n_\alpha}\Pi_{[s_I,r_I],I,{J},I\backslash J,A}.
\end{align}
Note that for the corresponding rings getting involved in the corresponding definition above we consider the corresponding various Fr\'echet norms for each $t_I>0$:
\begin{displaymath}
\|.\|_{\prod_{\alpha\in I}\varphi_\alpha^{n_\alpha}\Pi_{[s_I,r_I],I,{J},I\backslash J,A},t_I}	
\end{displaymath}
Then we define the corresponding ring $\Pi_{[s_I,r_I],I,\widetilde{J},\widetilde{I\backslash J},A}$, this is defined by the following Fr\'echet completion process:
\begin{align}
\Pi_{[s_I,r_I],I,\widetilde{J},\widetilde{I\backslash J},A}:=\left(\varinjlim_{n_\alpha\geq 0,\alpha  \in I}\prod_{\alpha\in I}\varphi_\alpha^{n_\alpha}\Pi_{[s_I,r_I],I,{J},I\backslash J,A}\right)^\wedge_{\|.\|_{\prod_{\alpha\in I}\varphi_\alpha^{n_\alpha}\Pi_{[s_I,r_I],I,{J},I\backslash J,A},t_I},t_I\in [s_I,r_I]}.
\end{align}

\end{definition}

\indent Then we consider the following definition building on the definitions above:

\begin{definition}\mbox{\bf{(After Kedlaya-Liu, \cite[Definition 5.2.1]{KL2})}}
For the ring $\Pi_{[s_I,r_I],I,\widetilde{J},\breve{I\backslash J},A}$, this is defined by:
\begin{align}
\Pi_{[s_I,r_I],I,\widetilde{J},\breve{I\backslash J},A}:=\varinjlim_{n_\alpha\geq 0,\alpha  \in I\backslash J}\prod_{\alpha\in I\backslash J}\varphi_\alpha^{n_\alpha}\Pi_{[s_I,r_I],I,\widetilde{J},I\backslash J,A}.
\end{align}
For the ring $\Pi_{[s_I,r_I],I,\breve{J},\widetilde{I\backslash J},A}$, this is defined by:
\begin{align}
\Pi_{[s_I,r_I],I,\breve{J},\widetilde{I\backslash J},A}:=\varinjlim_{n_\alpha\geq 0,\alpha  \in J}\prod_{\alpha\in J}\varphi_\alpha^{n_\alpha}\Pi_{[s_I,r_I],I,{J},\widetilde{I\backslash J},A}.
\end{align}

\end{definition}

\indent Then we have the following definitions:

\begin{definition} \mbox{\bf{(After Kedlaya-Liu, \cite[Definition 5.2.1]{KL2})}}
\begin{align}
\Pi_{\mathrm{an},r_I,I,\breve{J},I\backslash J,A}:=\varprojlim_{s_I}\Pi_{[s_I,r_I],I,\breve{J},I\backslash J,A},\\	
\Pi_{\mathrm{an},r_I,I,\widetilde{J},I\backslash J,A}:=\varprojlim_{s_I} \Pi_{[s_I,r_I],I,\widetilde{J},I\backslash J,A},\\
\Pi_{\mathrm{an},r_I,I,J,\breve{I\backslash J},A}:=\varprojlim_{s_I}\Pi_{[s_I,r_I],I,J,\breve{I\backslash J},A},\\	
\Pi_{\mathrm{an},r_I,I,\breve{J},\breve{I\backslash J},A}:=\varprojlim_{s_I} \Pi_{[s_I,r_I],I,\breve{J},\breve{I\backslash J},A},\\	
\Pi_{\mathrm{an},r_I,I,\widetilde{J},\breve{I\backslash J},A}:=\varprojlim_{s_I} \Pi_{[s_I,r_I],I,\widetilde{J},\breve{I\backslash J},A},\\
\Pi_{\mathrm{an},r_I,I,J,\widetilde{I\backslash J},A}:=\varprojlim_{s_I} \Pi_{[s_I,r_I],I,J,\widetilde{I\backslash J},A},\\	
\Pi_{\mathrm{an},r_I,I,\breve{J},\widetilde{I\backslash J},A}:=\varprojlim_{s_I} \Pi_{[s_I,r_I],I,\breve{J},\widetilde{I\backslash J},A},\\	
\Pi_{\mathrm{an},r_I,I,\widetilde{J},\widetilde{I\backslash J},A}:=\varprojlim_{s_I} \Pi_{[s_I,r_I],I,\widetilde{J},\widetilde{I\backslash J},A}.	
\end{align}

and 

\begin{align}
\Pi_{\mathrm{an},\mathrm{con},I,\breve{J},I\backslash J,A}:=\varinjlim_{r_I}\varprojlim_{s_I}\Pi_{[s_I,r_I],I,\breve{J},I\backslash J,A},\\	
\Pi_{\mathrm{an},\mathrm{con},I,\widetilde{J},I\backslash J,A}:=\varinjlim_{r_I}\varprojlim_{s_I} \Pi_{[s_I,r_I],I,\widetilde{J},I\backslash J,A},\\
\Pi_{\mathrm{an},\mathrm{con},I,J,\breve{I\backslash J},A}:=\varinjlim_{r_I}\varprojlim_{s_I}\Pi_{[s_I,r_I],I,J,\breve{I\backslash J},A},\\	
\Pi_{\mathrm{an},\mathrm{con},I,\breve{J},\breve{I\backslash J},A}:=\varinjlim_{r_I}\varprojlim_{s_I} \Pi_{[s_I,r_I],I,\breve{J},\breve{I\backslash J},A},\\
\Pi_{\mathrm{an},\mathrm{con},I,\widetilde{J},\breve{I\backslash J},A}:=\varinjlim_{r_I}\varprojlim_{s_I} \Pi_{[s_I,r_I],I,\widetilde{J},\breve{I\backslash J},A},\\
\Pi_{\mathrm{an},\mathrm{con},I,J,\widetilde{I\backslash J},A}:=\varinjlim_{r_I}\varprojlim_{s_I} \Pi_{[s_I,r_I],I,J,\widetilde{I\backslash J},A},\\	
\Pi_{\mathrm{an},\mathrm{con},I,\breve{J},\widetilde{I\backslash J},A}:=\varinjlim_{r_I}\varprojlim_{s_I} \Pi_{[s_I,r_I],I,\breve{J},\widetilde{I\backslash J},A},\\	
\Pi_{\mathrm{an},\mathrm{con},I,\widetilde{J},\widetilde{I\backslash J},A}:=\varinjlim_{r_I}\varprojlim_{s_I} \Pi_{[s_I,r_I],I,\widetilde{J},\widetilde{I\backslash J},A}.\\	
\end{align}	
\end{definition}


\begin{setting}
The corresponding construction we established above is also motivated from the corresponding construction of multivariate Robba rings in \cite{Ked2}.\\	
\end{setting}

\subsection{Fr\'echet Objects}

\indent Now we perform some of the corresponding construction parallel to the corresponding Fr\'echet-Stein construction used originally in \cite{KPX}, which is essentially developed in \cite[Section 2.6]{KL2}. In effect, the corresponding construction was already used in \cite{T1}, although the paper behaves as if we are not fully after \cite{KL2}. The reason of such impression left for the readers is essentially due to the fact that we are in the noetherian situation. Now we are not definitely in the noetherian situation, but \cite[Section 2.6]{KL2} has already tackled this issue.

\begin{setting} \mbox{\bf{(After Kedlaya-Liu, \cite[Definition 2.6.2]{KL2})}}
Following \cite[Definition 2.6.2]{KL2} we call a Banach uniform adic ring $(R,R^+)$ quasi-Stein if it could be written as the following inverse limit with the corresponding transition map of dense image for some inverse system $\{\alpha\}$ of indexes:
\begin{displaymath}
(R,R^+):=\varprojlim_{\alpha}  (R_\alpha,R_\alpha^+).	
\end{displaymath}
And we call the ring ind-Fr\'echet-Stein if it is further could be written the following injective-projective limit:
\begin{displaymath}
(R,R^+):=\varinjlim_{\alpha'}\varprojlim_{\alpha}(R_{\alpha,\alpha'} ,R_{\alpha,\alpha'}^+).	
\end{displaymath}
	
\end{setting}

\indent After this axiomization we could now study the corresponding sheaves over these rings. In what follow, we consider the corresponding radii living in the set of all the rational numbers.

\begin{definition} \mbox{\bf{(After KPX, \cite[Definition 2.1.3]{KPX})}}
Over $\Pi_{\mathrm{an},r_{I,0},I,\breve{J},I\backslash J,A}$ we define the corresponding stably pseudocoherent sheaves to mean a collection of stably pseudocoherent modules $(M_{[s_I,r_I]})$	over each $\Pi_{[s_I,r_I],I,\breve{J},I\backslash J,A}$ satisfying the corresponding compatibility condition and the obvious cocycle condition with respect to the family of the corresponding multi-intervals $\{[s_I,r_I]\}$.
\end{definition}

\begin{definition} \mbox{\bf{(After KPX, \cite[Definition 2.1.3]{KPX})}}
Over $\Pi_{\mathrm{an},r_{I,0},I,\widetilde{J},I\backslash J,A}$ we define the corresponding stably pseudocoherent sheaves to mean a collection of stably pseudocoherent modules $(M_{[s_I,r_I]})$	over each $\Pi_{[s_I,r_I],I,\widetilde{J},I\backslash J,A}$ satisfying the corresponding compatibility condition and the obvious cocycle condition with respect to the family of the corresponding multi-intervals $\{[s_I,r_I]\}$.
\end{definition}

\begin{definition} \mbox{\bf{(After KPX, \cite[Definition 2.1.3]{KPX})}}
Over $\Pi_{\mathrm{an},r_{I,0},I,{J},\breve{I\backslash J},A}$ we define the corresponding stably pseudocoherent sheaves to mean a collection of stably pseudocoherent modules $(M_{[s_I,r_I]})$	over each $\Pi_{[s_I,r_I],I,{J},\breve{I\backslash J},A}$ satisfying the corresponding compatibility condition and the obvious cocycle condition with respect to the family of the corresponding multi-intervals $\{[s_I,r_I]\}$.
\end{definition}

\begin{definition} \mbox{\bf{(After KPX, \cite[Definition 2.1.3]{KPX})}}
Over $\Pi_{\mathrm{an},r_{I,0},I,\breve{J},\breve{I\backslash J},A}$ we define the corresponding stably pseudocoherent sheaves to mean a collection of stably pseudocoherent modules $(M_{[s_I,r_I]})$	over each $\Pi_{[s_I,r_I],I,\breve{J},\breve{I\backslash J},A}$ satisfying the corresponding compatibility condition and the obvious cocycle condition with respect to the family of the corresponding multi-intervals $\{[s_I,r_I]\}$.
\end{definition}

\begin{definition} \mbox{\bf{(After KPX, \cite[Definition 2.1.3]{KPX})}}
Over $\Pi_{\mathrm{an},r_{I,0},I,\widetilde{J},\breve{I\backslash J},A}$ we define the corresponding stably pseudocoherent sheaves to mean a collection of stably pseudocoherent modules $(M_{[s_I,r_I]})$ over each $\Pi_{[s_I,r_I],I,\widetilde{J},\breve{I\backslash J},A}$ satisfying the corresponding compatibility condition and the obvious cocycle condition with respect to the family of the corresponding multi-intervals $\{[s_I,r_I]\}$.
\end{definition}

\begin{definition} \mbox{\bf{(After KPX, \cite[Definition 2.1.3]{KPX})}}
Over $\Pi_{\mathrm{an},r_{I,0},I,{J},\widetilde{I\backslash J},A}$ we define the corresponding stably pseudocoherent sheaves to mean a collection of stably pseudocoherent modules $(M_{[s_I,r_I]})$	over each $\Pi_{[s_I,r_I],I,{J},\widetilde{I\backslash J},A}$ satisfying the corresponding compatibility condition and the obvious cocycle condition with respect to the family of the corresponding multi-intervals $\{[s_I,r_I]\}$.
\end{definition}

\begin{definition} \mbox{\bf{(After KPX, \cite[Definition 2.1.3]{KPX})}}
Over $\Pi_{\mathrm{an},r_{I,0},I,\breve{J},\widetilde{I\backslash J},A}$ we define the corresponding stably pseudocoherent sheaves to mean a collection of stably pseudocoherent modules $(M_{[s_I,r_I]})$ over each $\Pi_{[s_I,r_I],I,\breve{J},\widetilde{I\backslash J},A}$ satisfying the corresponding compatibility condition and the obvious cocycle condition with respect to the family of the corresponding multi-intervals $\{[s_I,r_I]\}$.
\end{definition}

\begin{definition} \mbox{\bf{(After KPX, \cite[Definition 2.1.3]{KPX})}}
Over $\Pi_{\mathrm{an},r_{I,0},I,\widetilde{J},\widetilde{I\backslash J},A}$ we define the corresponding stably pseudocoherent sheaves to mean a collection of stably pseudocoherent modules $(M_{[s_I,r_I]})$ over each $\Pi_{[s_I,r_I],I,\widetilde{J},\widetilde{I\backslash J},A}$ satisfying the corresponding compatibility condition and the obvious cocycle condition with respect to the family of the corresponding multi-intervals $\{[s_I,r_I]\}$.
\end{definition}

\begin{remark}
There is some overlap and repeating on some objects within the eight categories. Therefore in the following we are going to then work with only 1st, 2nd, 4th, 5th, 8th categories. 	
\end{remark}

\begin{proposition} \mbox{\bf{(After Kedlaya-Liu, \cite[Corollary 2.6.8]{KL2})}}
The corresponding pseudocoherent finitely projective bundles over
\begin{align}
\Pi_{\mathrm{an},r_{I,0},I,\breve{J},I\backslash J,A},\\
\Pi_{\mathrm{an},r_{I,0},I,\widetilde{J},I\backslash J,A},\\
\Pi_{\mathrm{an},r_{I,0},I,\breve{J},\breve{I\backslash J},A},\\
\Pi_{\mathrm{an},r_{I,0},I,\widetilde{J},\breve{I\backslash J},A},\\
\Pi_{\mathrm{an},r_{I,0},I,\widetilde{J},\widetilde{I\backslash J},A}
\end{align}
defined as above have global sections which are finite projective if and only if the global sections are finitely generated. 	
\end{proposition}

\begin{proof}
Just apply \cite[Corollary 2.6.8]{KL2}.	
\end{proof}

\begin{proposition} \mbox{\bf{(After Kedlaya-Liu, \cite[Proposition 2.6.17]{KL2})}} \label{proposition2.18}
The corresponding pseudocoherent sheaves over
\begin{align}
\Pi_{\mathrm{an},r_{I,0},I,\breve{J},I\backslash J,A},\\
\Pi_{\mathrm{an},r_{I,0},I,\widetilde{J},I\backslash J,A},\\
\Pi_{\mathrm{an},r_{I,0},I,\breve{J},\breve{I\backslash J},A},\\
\Pi_{\mathrm{an},r_{I,0},I,\widetilde{J},\breve{I\backslash J},A},\\
\Pi_{\mathrm{an},r_{I,0},I,\widetilde{J},\widetilde{I\backslash J},A}
\end{align}
defined as above have global sections which are finitely generated as long as we have the uniform bound on the rank of the bundles over quasi-compacts with respect to closed multi-intervals taking the general form of $[s_I,r_I]$. 	
\end{proposition}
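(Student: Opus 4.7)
The plan is to reduce the statement to the direct application of the Kedlaya-Liu result \cite[Proposition 2.6.17]{KL2}, working ring-by-ring through the five cases listed. The key structural input I would need to establish is that each of the rings
\begin{align*}
\Pi_{\mathrm{an},r_{I,0},I,\breve{J},I\backslash J,A},\quad \Pi_{\mathrm{an},r_{I,0},I,\widetilde{J},I\backslash J,A},\quad \Pi_{\mathrm{an},r_{I,0},I,\breve{J},\breve{I\backslash J},A},\quad \Pi_{\mathrm{an},r_{I,0},I,\widetilde{J},\breve{I\backslash J},A},\quad \Pi_{\mathrm{an},r_{I,0},I,\widetilde{J},\widetilde{I\backslash J},A}
\end{align*}
fits the quasi-Stein (or ind-Fr\'echet-Stein) formalism recalled in the setting after \cite[Definition 2.6.2]{KL2}, so that the abstract finite generation theorem can be invoked.

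First I would note that by construction each of these rings is the inverse limit $\varprojlim_{s_I} \Pi_{[s_I,r_I],\ldots,A}$ over shrinking multi-intervals, and the transition maps are restrictions of analytic functions from a wider annulus to a narrower one; these have dense image by standard polynomial/Laurent approximation arguments over the affinoid building blocks $\Pi_{[s_I,r_I],I,I,\emptyset,A}$, and density is preserved after taking the filtered colimits over $n_\alpha$ in the perfection constructions and after Fr\'echet completion, since the colimit operation and completion both commute with having dense image in the appropriate Fr\'echet seminorms $\|\cdot\|_{t_I}$. This identifies each $\Pi_{\mathrm{an},r_{I,0},\ldots,A}$ as a Banach uniform adic ring that is quasi-Stein in the sense of the setting above.

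Next I would invoke \cite[Proposition 2.6.17]{KL2}, which precisely states that for a pseudocoherent sheaf over a quasi-Stein space, the global sections are finitely generated provided one has a uniform bound on the ranks over the quasi-compact strata of the defining inverse system. In our setting, the hypothesis of the proposition supplies exactly such a uniform bound on closed multi-intervals $[s_I,r_I]$, so the cited result applies once the quasi-Stein identification is in place. The proof then reduces to checking the compatibility and cocycle conditions in our definitions of stably pseudocoherent sheaves match the sheaf-theoretic setup required by Kedlaya-Liu, which is immediate from the way the families $(M_{[s_I,r_I]})$ were defined above.

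The main obstacle will be the perfectized cases $\breve{J}, \widetilde{J}, \breve{I\backslash J}, \widetilde{I\backslash J}$: here the transition maps go through a filtered colimit over iterates of the partial Frobenii $\varphi_\alpha$ followed by Fr\'echet completion, and I need to check that the dense-image condition for the inverse system $\{s_I\}$ survives both operations. The argument I have in mind is to handle this one operation at a time: first verify density at the pre-completion colimit level using that $\varphi_\alpha$ is an isometry for the relevant norms, then push through the completion using that dense-image maps of Banach rings remain dense-image after completion with respect to a compatible family of submultiplicative norms. Once this technical point is settled, all five cases follow uniformly from \cite[Proposition 2.6.17]{KL2}, giving the claimed finite generation of global sections.
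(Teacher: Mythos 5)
Your proposal takes essentially the same route as the paper: both reduce the statement to a direct application of \cite[Proposition 2.6.17]{KL2}. The paper's own proof is a single sentence, and the one substantive point it makes is precisely the point your write-up omits: the cited Kedlaya--Liu result is stated for a quasi-Stein space admitting an $n$-uniform covering, and the paper notes that the multi-radius parameter space $(0,r_{I,0}]$ admits a $2^{|I|}$-uniform covering by the closed multi-intervals $[s_I,r_I]$ (in each coordinate direction the half-open interval is covered by closed intervals overlapping pairwise, and the product structure gives the exponent $2^{|I|}$). You should add this verification, since the uniform-covering hypothesis --- not merely the quasi-Stein property --- is what licenses the finite-generation conclusion in \cite[Proposition 2.6.17]{KL2}.

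Conversely, the material you do supply --- the dense-image transition maps surviving the partial-Frobenius colimits and the Fr\'echet completion --- is a reasonable elaboration of why the rings fit the quasi-Stein formalism, and it is content the paper leaves implicit (it is essentially absorbed into the earlier Setting quoting \cite[Definition 2.6.2]{KL2}). So your argument is compatible with the paper's, just weighted toward a different part of the verification; with the uniform-covering point added, the two proofs coincide.
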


\begin{proof}
This is actually a direct consequence of \cite[Proposition 2.6.17]{KL2}, where the space $(0,r_{I,0}]$ admits $2^{|I|}$-uniform covering.	
\end{proof}

\begin{proposition} \mbox{\bf{(After Kedlaya-Liu, \cite[Corollary 2.6.8, Proposition 2.6.17]{KL2})}} \label{proposition2.19}
The corresponding pseudocoherent sheaves over
\begin{align}
\Pi_{\mathrm{an},r_{I,0},I,\breve{J},I\backslash J,A},\\
\Pi_{\mathrm{an},r_{I,0},I,\widetilde{J},I\backslash J,A},\\
\Pi_{\mathrm{an},r_{I,0},I,\breve{J},\breve{I\backslash J},A},\\
\Pi_{\mathrm{an},r_{I,0},I,\widetilde{J},\breve{I\backslash J},A},\\
\Pi_{\mathrm{an},r_{I,0},I,\widetilde{J},\widetilde{I\backslash J},A}
\end{align}
defined as above have global sections which are finite projective as long as we have the uniform bound on the rank of the bundle over each quasi-compact with respect to each closed multi-interval $[s_I,r_I]$, and we have that the sheaves admits section actually finite projective over each quasi-compact with respect to each closed multi-interval $[s_I,r_I]$. 	
\end{proposition}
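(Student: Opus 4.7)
The plan is to deduce Proposition \ref{proposition2.19} by combining the two preceding propositions with the structural results on quasi-Stein / Fr\'echet--Stein rings from \cite[Section 2.6]{KL2}. Concretely, fix one of the five rings in the list and a pseudocoherent sheaf $M=(M_{[s_I,r_I]})$ on it satisfying the two hypotheses.

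First I would invoke Proposition~\ref{proposition2.18}: the uniform bound on ranks over each quasi-compact $[s_I,r_I]$ is precisely the hypothesis there, so the global sections $M(\Pi_{\mathrm{an},r_{I,0},\ldots,A})$ form a finitely generated module. The content of this step is that the Fr\'echet--Stein presentation of the ring, obtained from the $2^{|I|}$-uniform covering of $(0,r_{I,0}]$ by closed multi-intervals as in the proof of \ref{proposition2.18}, together with \cite[Proposition 2.6.17]{KL2}, lets one glue the local bounded-rank data to a finitely generated global module. Next, combine this finite generation with the second hypothesis, namely that on each closed multi-interval $[s_I,r_I]$ the local sections $M_{[s_I,r_I]}$ are in fact finite projective, not merely pseudocoherent. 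Apply Proposition~2.17 above (which is just \cite[Corollary 2.6.8]{KL2}): for pseudocoherent bundles over these quasi-Stein rings, finite generation of global sections upgrades automatically to finite projectivity, provided the local pieces are finite projective. Both conditions are now available, so the conclusion follows.

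The only step requiring genuine care is verifying that the five target rings fall within the scope of \cite[Corollary 2.6.8]{KL2} and \cite[Proposition 2.6.17]{KL2}, that is, that each one is indeed (ind-)Fr\'echet--Stein in the sense of the Setting recalled above. For the ``non-perfected'' direction (the affinoid variables indexed by $I\setminus J$ without a $\breve{\cdot}$ or $\widetilde{\cdot}$) this is immediate from the affinoid presentation. For the perfected directions one must check that the partial-Frobenius colimits and their Fr\'echet completions that enter the definitions of $\Pi_{[s_I,r_I],I,\breve{J},\breve{I\setminus J},A}$ etc.\ remain compatible with the quasi-Stein inverse limit in the radii $s_I$; this is what allows the descent of pseudocoherent data from the rings on closed intervals to the global ring on the half-open polyannulus $(0,r_{I,0}]$. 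The fact that we are no longer Noetherian (because of the perfection) is absorbed by our reliance on \cite[Section 2.6]{KL2}, which was precisely designed to handle the non-Noetherian quasi-Stein case.

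I expect the main technical obstacle to be tracking uniform-rank bounds through the Fr\'echet completion and partial-Frobenius colimit: a priori the rank could be measured on the non-perfected ring $\Pi_{[s_I,r_I],I,J,I\setminus J,A}$, and one needs to ensure that tensoring up to, say, $\Pi_{[s_I,r_I],I,\widetilde{J},\widetilde{I\setminus J},A}$ preserves both pseudocoherence and the projectivity rank. However, this is entirely parallel to the arguments Kedlaya--Liu run in \cite[Section 5.2]{KL2}, so after setting up the bookkeeping of indices the argument reduces to a direct citation. Aside from this bookkeeping, the proof is essentially a two-line combination: uniform-rank bound $\Rightarrow$ finite generation (via \ref{proposition2.18}), plus pointwise finite projectivity $\Rightarrow$ global finite projectivity (via \cite[Corollary 2.6.8]{KL2}).
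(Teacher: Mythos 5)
Your proposal is correct and follows exactly the paper's own argument: the paper proves this proposition as a direct corollary of the two preceding ones, using the uniform rank bound to get finite generation of global sections via Proposition~\ref{proposition2.18} and then upgrading to finite projectivity via the Corollary~2.6.8 analogue, precisely as you describe. The additional remarks you make about verifying the quasi-Stein hypotheses for the perfected rings are sensible bookkeeping but are treated as part of the ambient setup in the paper rather than as part of this proof.
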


\begin{proof}
This is actually a direct corollary of the previous two propositions. 	
\end{proof}

\newpage

\section{Big Robba Rings over General Banach Affinoids and Fr\'echet Objects in Mixed-characteristic Case}

\subsection{Big Robba Rings over General Banach Affinoids}

\indent Since we have already considered the corresponding foundation from \cite{KL2} on the quasi-Stein nonnoetherian adic Banach uniform algebra over $\mathbb{Q}_p$, we hope then now study more general $p$-adic analysis of several variables. Certainly as mentioned in \cite{KPX} one could carry some strongly noetherian coefficients, where everything is sheafy, but one might be very curious about the situation where we do not have so strong condition on the noetherianness. Actually we could then apply the derived analytic geometry in \cite{BK}.

\begin{definition}
Let $A$ be any commutative Banach algebra over $\mathbb{Q}_p$. We consider the corresponding multi intervals $[\omega^{r_I},\omega^{s_I}]$. We have the corresponding Robba rings defined as in \cite[Definition 2.4]{T1}:
\begin{displaymath}
\Pi_{[s_I,r_I],I,A}	
\end{displaymath}
which is defined to be the corresponding affinoid:
\begin{displaymath}
A\widehat{\otimes}_{\mathbb{Q}_p}\mathbb{Q}_p\{\omega^{r_1}/T_1,...,\omega^{r_I}/T_I,T_1/\omega^{s_1},...,T_I/\omega^{s_I}\}.	
\end{displaymath}
Then we have the corresponding rings:
\begin{displaymath}
\Pi_{\mathrm{an},r_I,I,A}:= \varprojlim_{s_I} \Pi_{[s_I,r_I],I,A}.			
\end{displaymath}
with 
\begin{displaymath}
\Pi_{\mathrm{an,con},I,A}:= \bigcup_{r_I}\varprojlim_{s_I} \Pi_{[s_I,r_I],I,A}.			
\end{displaymath}
However we will in this paper to consider some more complicated version of the rings. We will use some partial Frobenius to perfectize partially the rings defined above. Therefore we will in some more uniform way to denote the rings in the following different way:
\begin{align}
\Pi_{[s_I,r_I],I,I,\emptyset,A}:=\Pi_{[s_I,r_I],I,A}\\
\Pi_{\mathrm{an},r_I,I,I,\emptyset,A}:=\Pi_{\mathrm{an},r_I,I,A}\\
\Pi_{\mathrm{an,con},I,I,\emptyset,A}:=\Pi_{\mathrm{an,con},I,\emptyset,A}.	
\end{align}
	
\end{definition}

\indent Now we follow the idea in \cite[Definition 5.2.1]{Ked2} to define some extended version of the rings. We will have the following rings to be:

\begin{align}
\Pi_{[s_I,r_I],I,?,?',A},?=J,\widetilde{J},\breve{J},?'=I\backslash J,\widetilde{I\backslash J},\breve{I\backslash J},\\		
\end{align}

and 

\begin{align}
\Pi_{\mathrm{an},r_I,I,?,?',A},?=J,\widetilde{J},\breve{J},?'=I\backslash J,\widetilde{I\backslash J},\breve{I\backslash J},\\ 	
\end{align}

and 

\begin{align}
\Pi_{\mathrm{an},\mathrm{con},I,?,?',A},?=J,\widetilde{J},\breve{J},?'=I\backslash J,\widetilde{I\backslash J},\breve{I\backslash J}.\\	
\end{align}

\begin{definition}\mbox{\bf{(After Kedlaya-Liu, \cite[Definition 5.2.1]{KL2})}}
We first define the corresponding first group of the rings. The corresponding rings in groups as mentioned above are defined by using the corresponding partial Frobenius $\varphi_1,...,\varphi_I$ and the corresponding Fr\'echet completion.	For the ring $\Pi_{[s_I,r_I],I,\breve{J},I\backslash J,A}$, this is defined by:
\begin{align}
\Pi_{[s_I,r_I],I,\breve{J},I\backslash J,A}:=\varinjlim_{n_\alpha\geq 0,\alpha  \in J}\prod_{\alpha\in J}\varphi_\alpha^{n_\alpha}\Pi_{[s_I,r_I],I,{J},I\backslash J,A}.
\end{align}
Note that for the corresponding rings getting involved in the corresponding definition above we consider the corresponding various Fr\'echet norms for each $t_I>0$:
\begin{displaymath}
\|.\|_{\prod_{\alpha\in J}\varphi_\alpha^{n_\alpha}\Pi_{[s_I,r_I],I,{J},I\backslash J,A},t_I}	
\end{displaymath}
Then we define the corresponding ring $\Pi_{[s_I,r_I],I,\widetilde{J},I\backslash J,A}$, this is defined by the following Fr\'echet completion process:
\begin{align}
\Pi_{[s_I,r_I],I,\widetilde{J},I\backslash J,A}:=\left(\varinjlim_{n_\alpha\geq 0,\alpha  \in J}\prod_{\alpha\in J}\varphi_\alpha^{n_\alpha}\Pi_{[s_I,r_I],I,{J},I\backslash J,A}\right)^\wedge_{\|.\|_{\prod_{\alpha\in J}\varphi_\alpha^{n_\alpha}\Pi_{[s_I,r_I],I,{J},I\backslash J,A},t_I},t_I\in [s_I,r_I]}.
\end{align}

\end{definition}

\indent Then in the corresponding symmetric way we have the following definition:

\begin{definition}\mbox{\bf{(After Kedlaya-Liu, \cite[Definition 5.2.1]{KL2})}}
For the ring $\Pi_{[s_I,r_I],I,{J},\breve{I\backslash J},A}$, this is defined by:
\begin{align}
\Pi_{[s_I,r_I],I,{J},\breve{I\backslash J},A}:=\varinjlim_{n_\alpha\geq 0,\alpha  \in I\backslash J}\prod_{\alpha\in I\backslash J}\varphi_\alpha^{n_\alpha}\Pi_{[s_I,r_I],I,{J},I\backslash J,A}.
\end{align}
Note that for the corresponding rings getting involved in the corresponding definition above we consider the corresponding various Fr\'echet norms for each $t_I>0$:
\begin{displaymath}
\|.\|_{\prod_{\alpha\in I\backslash J}\varphi_\alpha^{n_\alpha}\Pi_{[s_I,r_I],I,{J},I\backslash J,A},t_I}	
\end{displaymath}
Then we define the corresponding ring $\Pi_{[s_I,r_I],I,{J},\widetilde{I\backslash J},A}$, this is defined by the following Fr\'echet completion process:
\begin{align}
\Pi_{[s_I,r_I],I,{J},\widetilde{I\backslash J},A}:=\left(\varinjlim_{n_\alpha\geq 0,\alpha  \in I\backslash J}\prod_{\alpha\in I\backslash J}\varphi_\alpha^{n_\alpha}\Pi_{[s_I,r_I],I,\breve{J},\breve{I\backslash J},A}\right)^\wedge_{\|.\|_{\prod_{\alpha\in I}\varphi_\alpha^{n_\alpha}\Pi_{[s_I,r_I],I,{J},I\backslash J,A},t_I},t_I\in [s_I,r_I]}.
\end{align}

\end{definition}

\indent Then we do the following one:

\begin{definition}\mbox{\bf{(After Kedlaya-Liu, \cite[Definition 5.2.1]{KL2})}}
For the ring $\Pi_{[s_I,r_I],I,\breve{J},\breve{I\backslash J},A}$, this is defined by:
\begin{align}
\Pi_{[s_I,r_I],I,\breve{J},\breve{I\backslash J},A}:=\varinjlim_{n_\alpha\geq 0,\alpha  \in I}\prod_{\alpha\in I}\varphi_\alpha^{n_\alpha}\Pi_{[s_I,r_I],I,{J},I\backslash J,A}.
\end{align}
Note that for the corresponding rings getting involved in the corresponding definition above we consider the corresponding various Fr\'echet norms for each $t_I>0$:
\begin{displaymath}
\|.\|_{\prod_{\alpha\in I}\varphi_\alpha^{n_\alpha}\Pi_{[s_I,r_I],I,{J},I\backslash J,A},t_I}	
\end{displaymath}
Then we define the corresponding ring $\Pi_{[s_I,r_I],I,\widetilde{J},\widetilde{I\backslash J},A}$, this is defined by the following Fr\'echet completion process:
\begin{align}
\Pi_{[s_I,r_I],I,\widetilde{J},\widetilde{I\backslash J},A}:=\left(\varinjlim_{n_\alpha\geq 0,\alpha  \in I}\prod_{\alpha\in I}\varphi_\alpha^{n_\alpha}\Pi_{[s_I,r_I],I,{J},\breve{I\backslash J},A}\right)^\wedge_{\|.\|_{\prod_{\alpha\in I}\varphi_\alpha^{n_\alpha}\Pi_{[s_I,r_I],I,{J},I\backslash J,A},t_I},t_I\in [s_I,r_I]}.
\end{align}

\end{definition}

\indent Then we consider the following definition building on the definitions above:

\begin{definition}\mbox{\bf{(After Kedlaya-Liu, \cite[Definition 5.2.1]{KL2})}}
For the ring $\Pi_{[s_I,r_I],I,\widetilde{J},\breve{I\backslash J},A}$, this is defined by:
\begin{align}
\Pi_{[s_I,r_I],I,\widetilde{J},\breve{I\backslash J},A}:=\varinjlim_{n_\alpha\geq 0,\alpha  \in I\backslash J}\prod_{\alpha\in I\backslash J}\varphi_\alpha^{n_\alpha}\Pi_{[s_I,r_I],I,\widetilde{J},I\backslash J,A}.
\end{align}
For the ring $\Pi_{[s_I,r_I],I,\breve{J},\widetilde{I\backslash J},A}$, this is defined by:
\begin{align}
\Pi_{[s_I,r_I],I,\breve{J},\widetilde{I\backslash J},A}:=\varinjlim_{n_\alpha\geq 0,\alpha  \in J}\prod_{\alpha\in J}\varphi_\alpha^{n_\alpha}\Pi_{[s_I,r_I],I,{J},\widetilde{I\backslash J},A}.
\end{align}

\end{definition}

\indent Then we have the following definitions:

\begin{definition} \mbox{\bf{(After Kedlaya-Liu, \cite[Definition 5.2.1]{KL2})}}
\begin{align}
\Pi_{\mathrm{an},r_I,I,?,?',A}:=\varprojlim_{s_I}\Pi_{[s_I,r_I],I,?,?',A},?=J,\widetilde{J},\breve{J},?'=I\backslash J,\widetilde{I\backslash J},\breve{I\backslash J},\\	
\end{align}

and 

\begin{align}
\Pi_{\mathrm{an},\mathrm{con},I,?,?',A}:=\varinjlim_{r_I}\varprojlim_{s_I}\Pi_{[s_I,r_I],I,?,?',A},?=J,\widetilde{J},\breve{J},?'=I\backslash J,\widetilde{I\backslash J},\breve{I\backslash J}.\\	
\end{align}	
\end{definition}

\subsection{$\infty$-Robba Rings over General Banach Affinoids}

\indent We now apply the construction of \cite{BK} to the rings defined in the previous section. Recall from \cite{BK}, for any Banach adic algebra $R$ over $\mathbb{Q}_p$ we have the derived spectrum $\mathrm{Spa}^h(R):=\mathrm{Spa}^h_{\mathrm{Rat}}(R)$.

\begin{definition}
Consider the following rings we defined in the previous section:
\begin{align}
\Pi_{[s_I,r_I],I,\breve{J},I\backslash J,A},\\	
\Pi_{[s_I,r_I],I,\widetilde{J},I\backslash J,A},\\
\Pi_{[s_I,r_I],I,J,\breve{I\backslash J},A},\\	
\Pi_{[s_I,r_I],I,\breve{J},\breve{I\backslash J},A},\\	
\Pi_{[s_I,r_I],I,\widetilde{J},\breve{I\backslash J},A},\\
\Pi_{[s_I,r_I],I,J,\widetilde{I\backslash J},A},\\	
\Pi_{[s_I,r_I],I,\breve{J},\widetilde{I\backslash J},A},\\	
\Pi_{[s_I,r_I],I,\widetilde{J},\widetilde{I\backslash J},A}.	
\end{align}
We then take the corresponding derived spectrum from Bambozzi-Kremnizer to defined the following $\infty$-analytic stacks:
\begin{align}
\mathrm{Spa}^h\Pi_{[s_I,r_I],I,\breve{J},I\backslash J,A},\\	
\mathrm{Spa}^h\Pi_{[s_I,r_I],I,\widetilde{J},I\backslash J,A},\\
\mathrm{Spa}^h\Pi_{[s_I,r_I],I,J,\breve{I\backslash J},A},\\	
\mathrm{Spa}^h\Pi_{[s_I,r_I],I,\breve{J},\breve{I\backslash J},A},\\	
\mathrm{Spa}^h\Pi_{[s_I,r_I],I,\widetilde{J},\breve{I\backslash J},A},\\
\mathrm{Spa}^h\Pi_{[s_I,r_I],I,J,\widetilde{I\backslash J},A},\\	
\mathrm{Spa}^h\Pi_{[s_I,r_I],I,\breve{J},\widetilde{I\backslash J},A},\\	
\mathrm{Spa}^h\Pi_{[s_I,r_I],I,\widetilde{J},\widetilde{I\backslash J},A}.	
\end{align}
Taking the global section we have the following ring spectra:
\begin{align}
\Pi^h_{[s_I,r_I],I,\breve{J},I\backslash J,A},\\	
\Pi^h_{[s_I,r_I],I,\widetilde{J},I\backslash J,A},\\
\Pi^h_{[s_I,r_I],I,J,\breve{I\backslash J},A},\\	
\Pi^h_{[s_I,r_I],I,\breve{J},\breve{I\backslash J},A},\\	
\Pi^h_{[s_I,r_I],I,\widetilde{J},\breve{I\backslash J},A},\\
\Pi^h_{[s_I,r_I],I,J,\widetilde{I\backslash J},A},\\	
\Pi^h_{[s_I,r_I],I,\breve{J},\widetilde{I\backslash J},A},\\	
\Pi^h_{[s_I,r_I],I,\widetilde{J},\widetilde{I\backslash J},A}.
\end{align}
\end{definition}

\indent Then we have the following definitions:

\begin{definition} \mbox{\bf{(After Kedlaya-Liu, \cite[Definition 5.2.1]{KL2})}}
\begin{align}
\Pi^h_{\mathrm{an},r_I,I,\breve{J},I\backslash J,A}:=\varprojlim_{s_I}\Pi^h_{[s_I,r_I],I,\breve{J},I\backslash J,A},\\	
\Pi^h_{\mathrm{an},r_I,I,\widetilde{J},I\backslash J,A}:=\varprojlim_{s_I} \Pi^h_{[s_I,r_I],I,\widetilde{J},I\backslash J,A},\\
\Pi^h_{\mathrm{an},r_I,I,J,\breve{I\backslash J},A}:=\varprojlim_{s_I}\Pi^h_{[s_I,r_I],I,J,\breve{I\backslash J},A},\\	
\Pi^h_{\mathrm{an},r_I,I,\breve{J},\breve{I\backslash J},A}:=\varprojlim_{s_I} \Pi^h_{[s_I,r_I],I,\breve{J},\breve{I\backslash J},A},\\	
\Pi^h_{\mathrm{an},r_I,I,\widetilde{J},\breve{I\backslash J},A}:=\varprojlim_{s_I} \Pi^h_{[s_I,r_I],I,\widetilde{J},\breve{I\backslash J},A},\\
\Pi^h_{\mathrm{an},r_I,I,J,\widetilde{I\backslash J},A}:=\varprojlim_{s_I} \Pi^h_{[s_I,r_I],I,J,\widetilde{I\backslash J},A},\\	
\Pi^h_{\mathrm{an},r_I,I,\breve{J},\widetilde{I\backslash J},A}:=\varprojlim_{s_I} \Pi^h_{[s_I,r_I],I,\breve{J},\widetilde{I\backslash J},A},\\	
\Pi^h_{\mathrm{an},r_I,I,\widetilde{J},\widetilde{I\backslash J},A}:=\varprojlim_{s_I} \Pi^h_{[s_I,r_I],I,\widetilde{J},\widetilde{I\backslash J},A}.	
\end{align}

and 

\begin{align}
\Pi^h_{\mathrm{an},\mathrm{con},I,\breve{J},I\backslash J,A}:=\varinjlim_{r_I}\varprojlim_{s_I}\Pi^h_{[s_I,r_I],I,\breve{J},I\backslash J,A},\\	
\Pi^h_{\mathrm{an},\mathrm{con},I,\widetilde{J},I\backslash J,A}:=\varinjlim_{r_I}\varprojlim_{s_I} \Pi^h_{[s_I,r_I],I,\widetilde{J},I\backslash J,A},\\
\Pi^h_{\mathrm{an},\mathrm{con},I,J,\breve{I\backslash J},A}:=\varinjlim_{r_I}\varprojlim_{s_I}\Pi^h_{[s_I,r_I],I,J,\breve{I\backslash J},A},\\	
\Pi^h_{\mathrm{an},\mathrm{con},I,\breve{J},\breve{I\backslash J},A}:=\varinjlim_{r_I}\varprojlim_{s_I} \Pi^h_{[s_I,r_I],I,\breve{J},\breve{I\backslash J},A},\\
\Pi^h_{\mathrm{an},\mathrm{con},I,\widetilde{J},\breve{I\backslash J},A}:=\varinjlim_{r_I}\varprojlim_{s_I} \Pi^h_{[s_I,r_I],I,\widetilde{J},\breve{I\backslash J},A},\\
\Pi^h_{\mathrm{an},\mathrm{con},I,J,\widetilde{I\backslash J},A}:=\varinjlim_{r_I}\varprojlim_{s_I} \Pi^h_{[s_I,r_I],I,J,\widetilde{I\backslash J},A},\\	
\Pi^h_{\mathrm{an},\mathrm{con},I,\breve{J},\widetilde{I\backslash J},A}:=\varinjlim_{r_I}\varprojlim_{s_I} \Pi^h_{[s_I,r_I],I,\breve{J},\widetilde{I\backslash J},A},\\	
\Pi^h_{\mathrm{an},\mathrm{con},I,\widetilde{J},\widetilde{I\backslash J},A}:=\varinjlim_{r_I}\varprojlim_{s_I} \Pi^h_{[s_I,r_I],I,\widetilde{J},\widetilde{I\backslash J},A}.\\	
\end{align}	
\end{definition}

\subsection{Fr\'echet Objects in the Sheafy Situation}

\begin{assumption}
Assume the followings are sheafy adic Banach uniform algebra over $\mathbb{Q}_p$:
\begin{align}
\Pi_{[s_I,r_I],I,?,?',A},?=J,\widetilde{J},\breve{J},?'=I\backslash J,\widetilde{I\backslash J},\breve{I\backslash J}.	
\end{align}	
\end{assumption}

\indent In what follow, we consider the corresponding radii living in the set of all the rational numbers.

\begin{definition} \mbox{\bf{(After KPX, \cite[Definition 2.1.3]{KPX})}}
Over $\Pi_{\mathrm{an},r_{I,0},I,\breve{J},I\backslash J,A}$ we define the corresponding stably pseudocoherent sheaves to mean a collection of stably pseudocoherent modules $(M_{[s_I,r_I]})$	over each $\Pi_{[s_I,r_I],I,\breve{J},I\backslash J,A}$ satisfying the corresponding compatibility condition and the obvious cocycle condition with respect to the family of the corresponding multi-intervals $\{[s_I,r_I]\}$.
\end{definition}

\begin{definition} \mbox{\bf{(After KPX, \cite[Definition 2.1.3]{KPX})}}
Over $\Pi_{\mathrm{an},r_{I,0},I,\widetilde{J},I\backslash J,A}$ we define the corresponding stably pseudocoherent sheaves to mean a collection of stably pseudocoherent modules $(M_{[s_I,r_I]})$	over each $\Pi_{[s_I,r_I],I,\widetilde{J},I\backslash J,A}$ satisfying the corresponding compatibility condition and the obvious cocycle condition with respect to the family of the corresponding multi-intervals $\{[s_I,r_I]\}$.
\end{definition}

\begin{definition} \mbox{\bf{(After KPX, \cite[Definition 2.1.3]{KPX})}}
Over $\Pi_{\mathrm{an},r_{I,0},I,{J},\breve{I\backslash J},A}$ we define the corresponding stably pseudocoherent sheaves to mean a collection of stably pseudocoherent modules $(M_{[s_I,r_I]})$	over each $\Pi_{[s_I,r_I],I,{J},\breve{I\backslash J},A}$ satisfying the corresponding compatibility condition and the obvious cocycle condition with respect to the family of the corresponding multi-intervals $\{[s_I,r_I]\}$.
\end{definition}

\begin{definition} \mbox{\bf{(After KPX, \cite[Definition 2.1.3]{KPX})}}
Over $\Pi_{\mathrm{an},r_{I,0},I,\breve{J},\breve{I\backslash J},A}$ we define the corresponding stably pseudocoherent sheaves to mean a collection of stably pseudocoherent modules $(M_{[s_I,r_I]})$	over each $\Pi_{[s_I,r_I],I,\breve{J},\breve{I\backslash J},A}$ satisfying the corresponding compatibility condition and the obvious cocycle condition with respect to the family of the corresponding multi-intervals $\{[s_I,r_I]\}$.
\end{definition}

\begin{definition} \mbox{\bf{(After KPX, \cite[Definition 2.1.3]{KPX})}}
Over $\Pi_{\mathrm{an},r_{I,0},I,\widetilde{J},\breve{I\backslash J},A}$ we define the corresponding stably pseudocoherent sheaves to mean a collection of stably pseudocoherent modules $(M_{[s_I,r_I]})$ over each $\Pi_{[s_I,r_I],I,\widetilde{J},\breve{I\backslash J},A}$ satisfying the corresponding compatibility condition and the obvious cocycle condition with respect to the family of the corresponding multi-intervals $\{[s_I,r_I]\}$.
\end{definition}

\begin{definition} \mbox{\bf{(After KPX, \cite[Definition 2.1.3]{KPX})}}
Over $\Pi_{\mathrm{an},r_{I,0},I,{J},\widetilde{I\backslash J},A}$ we define the corresponding stably pseudocoherent sheaves to mean a collection of stably pseudocoherent modules $(M_{[s_I,r_I]})$	over each $\Pi_{[s_I,r_I],I,{J},\widetilde{I\backslash J},A}$ satisfying the corresponding compatibility condition and the obvious cocycle condition with respect to the family of the corresponding multi-intervals $\{[s_I,r_I]\}$.
\end{definition}

\begin{definition} \mbox{\bf{(After KPX, \cite[Definition 2.1.3]{KPX})}}
Over $\Pi_{\mathrm{an},r_{I,0},I,\breve{J},\widetilde{I\backslash J},A}$ we define the corresponding stably pseudocoherent sheaves to mean a collection of stably pseudocoherent modules $(M_{[s_I,r_I]})$ over each $\Pi_{[s_I,r_I],I,\breve{J},\widetilde{I\backslash J},A}$ satisfying the corresponding compatibility condition and the obvious cocycle condition with respect to the family of the corresponding multi-intervals $\{[s_I,r_I]\}$.
\end{definition}

\begin{definition} \mbox{\bf{(After KPX, \cite[Definition 2.1.3]{KPX})}}
Over $\Pi_{\mathrm{an},r_{I,0},I,\widetilde{J},\widetilde{I\backslash J},A}$ we define the corresponding stably pseudocoherent sheaves to mean a collection of stably pseudocoherent modules $(M_{[s_I,r_I]})$ over each $\Pi_{[s_I,r_I],I,\widetilde{J},\widetilde{I\backslash J},A}$ satisfying the corresponding compatibility condition and the obvious cocycle condition with respect to the family of the corresponding multi-intervals $\{[s_I,r_I]\}$.
\end{definition}

\begin{remark}
There is some overlap and repeating on some objects within the eight categories. Therefore in the following we are going to then work with only 1st, 2nd, 4th, 5th, 8th categories. 	
\end{remark}

\begin{proposition} \mbox{\bf{(After Kedlaya-Liu, \cite[Corollary 2.6.8]{KL2})}}
The corresponding pseudocoherent finitely projective bundles over
\begin{align}
\Pi_{\mathrm{an},r_{I,0},I,\breve{J},I\backslash J,A},\\
\Pi_{\mathrm{an},r_{I,0},I,\widetilde{J},I\backslash J,A},\\
\Pi_{\mathrm{an},r_{I,0},I,\breve{J},\breve{I\backslash J},A},\\
\Pi_{\mathrm{an},r_{I,0},I,\widetilde{J},\breve{I\backslash J},A},\\
\Pi_{\mathrm{an},r_{I,0},I,\widetilde{J},\widetilde{I\backslash J},A}
\end{align}
defined as above have global sections which are finite projective if and only if the global sections are finitely generated. 	
\end{proposition}

\begin{proof}
Just apply \cite[Corollary 2.6.8]{KL2}.	
\end{proof}

\begin{proposition} \mbox{\bf{(After Kedlaya-Liu, \cite[Proposition 2.6.17]{KL2})}}
The corresponding pseudocoherent sheaves over
\begin{align}
\Pi_{\mathrm{an},r_{I,0},I,\breve{J},I\backslash J,A},\\
\Pi_{\mathrm{an},r_{I,0},I,\widetilde{J},I\backslash J,A},\\
\Pi_{\mathrm{an},r_{I,0},I,\breve{J},\breve{I\backslash J},A},\\
\Pi_{\mathrm{an},r_{I,0},I,\widetilde{J},\breve{I\backslash J},A},\\
\Pi_{\mathrm{an},r_{I,0},I,\widetilde{J},\widetilde{I\backslash J},A}
\end{align}
defined as above have global sections which are finitely generated as long as we have the uniform bound on the rank of the bundles over quasi-compacts with respect to closed multi-intervals taking the general form of $[s_I,r_I]$. 	
\end{proposition}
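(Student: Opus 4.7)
The plan is to reduce directly to \cite[Proposition 2.6.17]{KL2} by exhibiting each of the five rings as a quasi-Stein inverse limit in the sense of \cite[Definition 2.6.2]{KL2}. Under the sheafiness assumption imposed at the start of this subsection, each $\Pi_{[s_I,r_I],I,?,?',A}$ is a sheafy adic Banach uniform algebra over $\mathbb{Q}_p$, and the Fr\'echet ring $\Pi_{\mathrm{an},r_{I,0},I,?,?',A}$ is by construction $\varprojlim_{s_I}\Pi_{[s_I,r_I],I,?,?',A}$, with transition maps of dense image coming from restriction of annular parameters. This places us squarely in the quasi-Stein framework that \cite[Section 2.6]{KL2} was designed to handle in the non-noetherian setting.

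First I would check that the multi-radius parameter space $(0,r_{I,0}]^I$ admits a $2^{|I|}$-uniform covering by closed multi-intervals $[s_I,r_I]$, exactly as in the proof of Proposition \ref{proposition2.18}: decompose each coordinate axis into dyadic-style slices, then take products. Since each coordinate contributes at most a two-fold overlap, every nonempty intersection of covering pieces is itself a closed multi-interval of the allowed form, and no point lies in more than $2^{|I|}$ pieces. This is the combinatorial input required by \cite[Proposition 2.6.17]{KL2} and is insensitive to which of the five flavours $(J,\breve{J},\widetilde{J})\times(I\backslash J,\breve{I\backslash J},\widetilde{I\backslash J})$ one works with.

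Given a stably pseudocoherent sheaf $(M_{[s_I,r_I]})$ on the tower together with the hypothesized uniform rank bound, \cite[Proposition 2.6.17]{KL2} applied separately to each of the five flavours of Fr\'echet completion then produces finitely generated global sections over $\Pi_{\mathrm{an},r_{I,0},I,?,?',A}$. The argument goes through uniformly because each of the selected five categories is built from an inductive perfection along a subset of coordinates followed by a Fr\'echet completion, and \cite[Section 2.6]{KL2} is tailored to exactly this mixed-type construction.

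The main obstacle I expect is verifying the sheafiness hypothesis for the hybrid and Fr\'echet-completed rings such as $\Pi_{[s_I,r_I],I,\widetilde{J},\breve{I\backslash J},A}$ and $\Pi_{[s_I,r_I],I,\widetilde{J},\widetilde{I\backslash J},A}$; this is precisely why the blanket assumption at the start of this subsection was introduced and why the statement is organized around the five nondegenerate flavours rather than all eight. Once sheafiness is granted, the stability properties of pseudocoherence under the colimits and Fr\'echet completions in play are standard from \cite[Section 2.6]{KL2}, and the uniform rank bound propagates from the finite-interval pieces to the global Fr\'echet section without further obstruction.
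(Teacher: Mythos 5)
Your proposal is correct and follows essentially the same route as the paper: the paper's proof is simply a direct appeal to \cite[Proposition 2.6.17]{KL2} together with the observation that the multi-radius space $(0,r_{I,0}]$ admits a $2^{|I|}$-uniform covering by closed multi-intervals. You supply the supporting details (quasi-Stein structure, dense transition maps, sheafiness of the completed rings) that the paper leaves implicit, but the core reduction is identical.
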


\begin{proof}
This is actually a direct consequence of \cite[Proposition 2.6.17]{KL2}, where the space $(0,r_{I,0}]$ admits $2^{|I|}$-uniform covering.	
\end{proof}

\begin{proposition} \mbox{\bf{(After Kedlaya-Liu, \cite[Corollary 2.6.8, Proposition 2.6.17]{KL2})}}
The corresponding pseudocoherent sheaves over
\begin{align}
\Pi_{\mathrm{an},r_{I,0},I,\breve{J},I\backslash J,A},\\
\Pi_{\mathrm{an},r_{I,0},I,\widetilde{J},I\backslash J,A},\\
\Pi_{\mathrm{an},r_{I,0},I,\breve{J},\breve{I\backslash J},A},\\
\Pi_{\mathrm{an},r_{I,0},I,\widetilde{J},\breve{I\backslash J},A},\\
\Pi_{\mathrm{an},r_{I,0},I,\widetilde{J},\widetilde{I\backslash J},A}
\end{align}
defined as above have global sections which are finite projective as long as we have the uniform bound on the rank of the bundle over each quasi-compact with respect to each closed multi-interval $[s_I,r_I]$, and we have that the sheaves admits section actually finite projective over each quasi-compact with respect to each closed multi-interval $[s_I,r_I]$. 	
\end{proposition}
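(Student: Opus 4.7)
The plan is to deduce the statement directly by combining the two immediately preceding propositions in this subsection, in exact parallel with Proposition \ref{proposition2.19} from the rigid affinoid setting. The sheafiness assumption placed at the start of this subsection is precisely what is needed for the Kedlaya-Liu machinery of \cite[Section 2.6]{KL2} to apply in the present nonnoetherian situation, so each of the five ambient rings may be treated as a quasi-Stein (in fact ind-Fr\'echet-Stein) Banach uniform adic algebra in the sense recalled in the earlier setting.

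First I would feed the hypothesis of a uniform rank bound on the local pieces $(M_{[s_I,r_I]})$ into the immediately preceding proposition, which is the general Banach-affinoid analogue of Proposition \ref{proposition2.18} and is itself an application of \cite[Proposition 2.6.17]{KL2}. This step produces finite generation of the module of global sections over each of the five rings, using the $2^{|I|}$-uniform covering of the parameter region $(0,r_{I,0}]$ by the closed multi-intervals $[s_I,r_I]$, exactly as in the rigid affinoid case.

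Next I would apply the other preceding proposition in this subsection, which is the analogue of \cite[Corollary 2.6.8]{KL2}, and which says that on each of these five quasi-Stein rings a pseudocoherent sheaf whose global sections are finitely generated is automatically finite projective, provided the local pieces are themselves finite projective over each stage $\Pi_{[s_I,r_I],I,?,?',A}$. The latter is precisely the second hypothesis included in the statement, so the two conclusions combine immediately to give what is asserted, and the argument is therefore formal given the two preceding propositions.

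The main obstacle I expect is checking that, for the partially perfectized rings carrying the accents $\breve{.}$ and $\widetilde{.}$, the inverse limits along $s_I$, the direct limits along the partial Frobenii $\varphi_\alpha$, and the Fr\'echet completions are sufficiently compatible to place the resulting system into the quasi-Stein framework of \cite[Definition 2.6.2]{KL2} under the blanket sheafiness assumption of this subsection; once that compatibility is verified, which is essentially the content of the assumption together with the Frobenius-equivariance of the transition maps, the cited Kedlaya-Liu results apply formally and no further essential computation is required.
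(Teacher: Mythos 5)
Your proposal matches the paper's argument exactly: the paper's proof is simply that the statement ``is a direct corollary of the previous two propositions,'' i.e.\ the uniform rank bound gives finite generation of global sections via the analogue of \cite[Proposition 2.6.17]{KL2}, and then finite generation plus local finite projectivity upgrades to finite projectivity via the analogue of \cite[Corollary 2.6.8]{KL2}. Your additional remarks about verifying the quasi-Stein compatibility for the partially perfectized rings are reasonable caution but go beyond what the paper itself records.
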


\begin{proof}
This is actually a direct corollary of the previous two propositions. 	
\end{proof}

\newpage

\section{Cyclotomic Multivariate $(\varphi_I,\Gamma_I)$-Modules over Rigid Analytic Affinoids in Mixed-characteristic Case}

\subsection{Fundamental Definitions}

\noindent In the situation where $A$ is a rigid analytic affinoid over $\mathbb{Q}_p$. Recall from \cite{T1}, suppose we have $|I|$ finite extensions of $\mathbb{Q}_p$, where we denote them as $K_1,...,K_I$. Then we have the corresponding uniformizers $\pi_{K_1},...,\pi_{K_I}$, the corresponding Frobenius operators $\varphi_1,...,\varphi_I$ and the corresponding groups $\Gamma_{K_1},...,\Gamma_{K_I}$. Recall from \cite{T1}, by adding the corresponding variables from $\pi_{K_1},...,\pi_{K_I}$ and $\Gamma_{K_1},...,\Gamma_{K_I}$ we have the following rings:

\begin{align}
\Pi_{[s_I,r_I],I,I,\emptyset,A}(\pi_{K_I}):=\Pi_{[s_I,r_I],I,A}(\pi_{K_I})\\
\Pi_{\mathrm{an},r_I,I,I,\emptyset,A}(\pi_{K_I}):=\Pi_{\mathrm{an},r_I,I,A}(\pi_{K_I})\\
\Pi_{\mathrm{an,con},I,I,\emptyset,A}(\pi_{K_I}):=\Pi_{\mathrm{an,con},I,\emptyset,A}(\pi_{K_I})	
\end{align}
and
\begin{align}
\Pi_{[s_I,r_I],I,I,\emptyset,A}(\Gamma_{K_I}):=\Pi_{[s_I,r_I],I,A}(\Gamma_{K_I})\\
\Pi_{\mathrm{an},r_I,I,I,\emptyset,A}(\Gamma_{K_I}):=\Pi_{\mathrm{an},r_I,I,A}(\Gamma_{K_I})\\
\Pi_{\mathrm{an,con},I,I,\emptyset,A}(\Gamma_{K_I}):=\Pi_{\mathrm{an,con},I,\emptyset,A}(\Gamma_{K_I}).	
\end{align}

\begin{definition}
By taking the direct base change we have the following rings in mixed-characteristic situation:
\begin{align}
\Pi_{[s_I,r_I],I,\breve{J},I\backslash J,A}(\pi_{K_I}),\\	
\Pi_{[s_I,r_I],I,\widetilde{J},I\backslash J,A}(\pi_{K_I}),\\
\Pi_{[s_I,r_I],I,J,\breve{I\backslash J},A}(\pi_{K_I}),\\	
\Pi_{[s_I,r_I],I,\breve{J},\breve{I\backslash J},A}(\pi_{K_I}),\\	
\Pi_{[s_I,r_I],I,\widetilde{J},\breve{I\backslash J},A}(\pi_{K_I}),\\
\Pi_{[s_I,r_I],I,J,\widetilde{I\backslash J},A}(\pi_{K_I}),\\	
\Pi_{[s_I,r_I],I,\breve{J},\widetilde{I\backslash J},A}(\pi_{K_I}),\\	
\Pi_{[s_I,r_I],I,\widetilde{J},\widetilde{I\backslash J},A}(\pi_{K_I}).	
\end{align}
By taking the direct base change we have the following rings in mixed-characteristic situation:
\begin{align}
\Pi_{[s_I,r_I],I,\breve{J},I\backslash J,A}(\Gamma_{K_I}),\\	
\Pi_{[s_I,r_I],I,\widetilde{J},I\backslash J,A}(\Gamma_{K_I}),\\
\Pi_{[s_I,r_I],I,J,\breve{I\backslash J},A}(\Gamma_{K_I}),\\	
\Pi_{[s_I,r_I],I,\breve{J},\breve{I\backslash J},A}(\Gamma_{K_I}),\\
\Pi_{[s_I,r_I],I,\widetilde{J},\breve{I\backslash J},A}(\Gamma_{K_I}),\\
\Pi_{[s_I,r_I],I,J,\widetilde{I\backslash J},A}(\Gamma_{K_I}),\\	
\Pi_{[s_I,r_I],I,\breve{J},\widetilde{I\backslash J},A}(\Gamma_{K_I}),\\	
\Pi_{[s_I,r_I],I,\widetilde{J},\widetilde{I\backslash J},A}(\Gamma_{K_I}).	
\end{align}	
\end{definition}

\begin{definition} \mbox{\bf{(After Kedlaya-Liu, \cite[Definition 5.2.1]{KL2})}}
\begin{align}
\Pi_{\mathrm{an},r_I,I,\breve{J},I\backslash J,A}(\pi_{K_I}):=\varprojlim_{s_I}\Pi_{[s_I,r_I],I,\breve{J},I\backslash J,A}(\pi_{K_I}),\\	
\Pi_{\mathrm{an},r_I,I,\widetilde{J},I\backslash J,A}(\pi_{K_I}):=\varprojlim_{s_I} \Pi_{[s_I,r_I],I,\widetilde{J},I\backslash J,A}(\pi_{K_I}),\\
\Pi_{\mathrm{an},r_I,I,J,\breve{I\backslash J},A}(\pi_{K_I}):=\varprojlim_{s_I}\Pi_{[s_I,r_I],I,J,\breve{I\backslash J},A}(\pi_{K_I}),\\	
\Pi_{\mathrm{an},r_I,I,\breve{J},\breve{I\backslash J},A}(\pi_{K_I}):=\varprojlim_{s_I} \Pi_{[s_I,r_I],I,\breve{J},\breve{I\backslash J},A}(\pi_{K_I}),\\	
\Pi_{\mathrm{an},r_I,I,\widetilde{J},\breve{I\backslash J},A}(\pi_{K_I}):=\varprojlim_{s_I} \Pi_{[s_I,r_I],I,\widetilde{J},\breve{I\backslash J},A}(\pi_{K_I}),\\
\Pi_{\mathrm{an},r_I,I,J,\widetilde{I\backslash J},A}(\pi_{K_I}):=\varprojlim_{s_I} \Pi_{[s_I,r_I],I,J,\widetilde{I\backslash J},A}(\pi_{K_I}),\\	
\Pi_{\mathrm{an},r_I,I,\breve{J},\widetilde{I\backslash J},A}(\pi_{K_I}):=\varprojlim_{s_I} \Pi_{[s_I,r_I],I,\breve{J},\widetilde{I\backslash J},A}(\pi_{K_I}),\\	
\Pi_{\mathrm{an},r_I,I,\widetilde{J},\widetilde{I\backslash J},A}(\pi_{K_I}):=\varprojlim_{s_I} \Pi_{[s_I,r_I],I,\widetilde{J},\widetilde{I\backslash J},A}(\pi_{K_I}).	
\end{align}

and 

\begin{align}
\Pi_{\mathrm{an},\mathrm{con},I,\breve{J},I\backslash J,A}(\pi_{K_I}):=\varinjlim_{r_I}\varprojlim_{s_I}\Pi_{[s_I,r_I],I,\breve{J},I\backslash J,A}(\pi_{K_I}),\\	
\Pi_{\mathrm{an},\mathrm{con},I,\widetilde{J},I\backslash J,A}(\pi_{K_I}):=\varinjlim_{r_I}\varprojlim_{s_I} \Pi_{[s_I,r_I],I,\widetilde{J},I\backslash J,A}(\pi_{K_I}),\\
\Pi_{\mathrm{an},\mathrm{con},I,J,\breve{I\backslash J},A}(\pi_{K_I}):=\varinjlim_{r_I}\varprojlim_{s_I}\Pi_{[s_I,r_I],I,J,\breve{I\backslash J},A}(\pi_{K_I}),\\	
\Pi_{\mathrm{an},\mathrm{con},I,\breve{J},\breve{I\backslash J},A}(\pi_{K_I}):=\varinjlim_{r_I}\varprojlim_{s_I} \Pi_{[s_I,r_I],I,\breve{J},\breve{I\backslash J},A}(\pi_{K_I}),\\
\Pi_{\mathrm{an},\mathrm{con},I,\widetilde{J},\breve{I\backslash J},A}(\pi_{K_I}):=\varinjlim_{r_I}\varprojlim_{s_I} \Pi_{[s_I,r_I],I,\widetilde{J},\breve{I\backslash J},A}(\pi_{K_I}),\\
\Pi_{\mathrm{an},\mathrm{con},I,J,\widetilde{I\backslash J},A}(\pi_{K_I}):=\varinjlim_{r_I}\varprojlim_{s_I} \Pi_{[s_I,r_I],I,J,\widetilde{I\backslash J},A}(\pi_{K_I}),\\	
\Pi_{\mathrm{an},\mathrm{con},I,\breve{J},\widetilde{I\backslash J},A}(\pi_{K_I}):=\varinjlim_{r_I}\varprojlim_{s_I} \Pi_{[s_I,r_I],I,\breve{J},\widetilde{I\backslash J},A}(\pi_{K_I}),\\	
\Pi_{\mathrm{an},\mathrm{con},I,\widetilde{J},\widetilde{I\backslash J},A}(\pi_{K_I}):=\varinjlim_{r_I}\varprojlim_{s_I} \Pi_{[s_I,r_I],I,\widetilde{J},\widetilde{I\backslash J},A}(\pi_{K_I}).\\	
\end{align}	
\end{definition}

\begin{definition} \mbox{\bf{(After Kedlaya-Liu, \cite[Definition 5.2.1]{KL2})}}
\begin{align}
\Pi_{\mathrm{an},r_I,I,\breve{J},I\backslash J,A}(\Gamma_{K_I}):=\varprojlim_{s_I}\Pi_{[s_I,r_I],I,\breve{J},I\backslash J,A}(\Gamma_{K_I}),\\	
\Pi_{\mathrm{an},r_I,I,\widetilde{J},I\backslash J,A}(\Gamma_{K_I}):=\varprojlim_{s_I} \Pi_{[s_I,r_I],I,\widetilde{J},I\backslash J,A}(\Gamma_{K_I}),\\
\Pi_{\mathrm{an},r_I,I,J,\breve{I\backslash J},A}(\Gamma_{K_I}):=\varprojlim_{s_I}\Pi_{[s_I,r_I],I,J,\breve{I\backslash J},A}(\Gamma_{K_I}),\\	
\Pi_{\mathrm{an},r_I,I,\breve{J},\breve{I\backslash J},A}(\Gamma_{K_I}):=\varprojlim_{s_I} \Pi_{[s_I,r_I],I,\breve{J},\breve{I\backslash J},A}(\Gamma_{K_I}),\\	
\Pi_{\mathrm{an},r_I,I,\widetilde{J},\breve{I\backslash J},A}(\Gamma_{K_I}):=\varprojlim_{s_I} \Pi_{[s_I,r_I],I,\widetilde{J},\breve{I\backslash J},A}(\Gamma_{K_I}),\\
\Pi_{\mathrm{an},r_I,I,J,\widetilde{I\backslash J},A}(\Gamma_{K_I}):=\varprojlim_{s_I} \Pi_{[s_I,r_I],I,J,\widetilde{I\backslash J},A}(\Gamma_{K_I}),\\	
\Pi_{\mathrm{an},r_I,I,\breve{J},\widetilde{I\backslash J},A}(\Gamma_{K_I}):=\varprojlim_{s_I} \Pi_{[s_I,r_I],I,\breve{J},\widetilde{I\backslash J},A}(\Gamma_{K_I}),\\	
\Pi_{\mathrm{an},r_I,I,\widetilde{J},\widetilde{I\backslash J},A}(\Gamma_{K_I}):=\varprojlim_{s_I} \Pi_{[s_I,r_I],I,\widetilde{J},\widetilde{I\backslash J},A}(\Gamma_{K_I}).	
\end{align}

and 

\begin{align}
\Pi_{\mathrm{an},\mathrm{con},I,\breve{J},I\backslash J,A}(\Gamma_{K_I}):=\varinjlim_{r_I}\varprojlim_{s_I}\Pi_{[s_I,r_I],I,\breve{J},I\backslash J,A}(\Gamma_{K_I}),\\	
\Pi_{\mathrm{an},\mathrm{con},I,\widetilde{J},I\backslash J,A}(\Gamma_{K_I}):=\varinjlim_{r_I}\varprojlim_{s_I} \Pi_{[s_I,r_I],I,\widetilde{J},I\backslash J,A}(\Gamma_{K_I}),\\
\Pi_{\mathrm{an},\mathrm{con},I,J,\breve{I\backslash J},A}(\Gamma_{K_I}):=\varinjlim_{r_I}\varprojlim_{s_I}\Pi_{[s_I,r_I],I,J,\breve{I\backslash J},A}(\Gamma_{K_I}),\\	
\Pi_{\mathrm{an},\mathrm{con},I,\breve{J},\breve{I\backslash J},A}(\Gamma_{K_I}):=\varinjlim_{r_I}\varprojlim_{s_I} \Pi_{[s_I,r_I],I,\breve{J},\breve{I\backslash J},A}(\Gamma_{K_I}),\\
\Pi_{\mathrm{an},\mathrm{con},I,\widetilde{J},\breve{I\backslash J},A}(\Gamma_{K_I}):=\varinjlim_{r_I}\varprojlim_{s_I} \Pi_{[s_I,r_I],I,\widetilde{J},\breve{I\backslash J},A}(\Gamma_{K_I}),\\
\Pi_{\mathrm{an},\mathrm{con},I,J,\widetilde{I\backslash J},A}(\Gamma_{K_I}):=\varinjlim_{r_I}\varprojlim_{s_I} \Pi_{[s_I,r_I],I,J,\widetilde{I\backslash J},A}(\Gamma_{K_I}),\\	
\Pi_{\mathrm{an},\mathrm{con},I,\breve{J},\widetilde{I\backslash J},A}(\Gamma_{K_I}):=\varinjlim_{r_I}\varprojlim_{s_I} \Pi_{[s_I,r_I],I,\breve{J},\widetilde{I\backslash J},A}(\Gamma_{K_I}),\\	
\Pi_{\mathrm{an},\mathrm{con},I,\widetilde{J},\widetilde{I\backslash J},A}(\Gamma_{K_I}):=\varinjlim_{r_I}\varprojlim_{s_I} \Pi_{[s_I,r_I],I,\widetilde{J},\widetilde{I\backslash J},A}(\Gamma_{K_I}).	
\end{align}	
\end{definition}

\indent Now we consider the corresponding definition of the $(\varphi_I,\Gamma_I)$-modules over the corresponding period rings  defined above.

\begin{setting}
For the corresponding $\varphi_I$-modules over the rings:
\begin{align}
\Pi_{\mathrm{an},r_I,I,\breve{J},I\backslash J,A}(\pi_{K_I}):=\varprojlim_{s_I}\Pi_{[s_I,r_I],I,\breve{J},I\backslash J,A}(\pi_{K_I}),\\	
\Pi_{\mathrm{an},r_I,I,\widetilde{J},I\backslash J,A}(\pi_{K_I}):=\varprojlim_{s_I} \Pi_{[s_I,r_I],I,\widetilde{J},I\backslash J,A}(\pi_{K_I}),\\
\Pi_{\mathrm{an},r_I,I,J,\breve{I\backslash J},A}(\pi_{K_I}):=\varprojlim_{s_I}\Pi_{[s_I,r_I],I,J,\breve{I\backslash J},A}(\pi_{K_I}),\\	
\Pi_{\mathrm{an},r_I,I,\breve{J},\breve{I\backslash J},A}(\pi_{K_I}):=\varprojlim_{s_I} \Pi_{[s_I,r_I],I,\breve{J},\breve{I\backslash J},A}(\pi_{K_I}),\\	
\Pi_{\mathrm{an},r_I,I,\widetilde{J},\breve{I\backslash J},A}(\pi_{K_I}):=\varprojlim_{s_I} \Pi_{[s_I,r_I],I,\widetilde{J},\breve{I\backslash J},A}(\pi_{K_I}),\\
\Pi_{\mathrm{an},r_I,I,J,\widetilde{I\backslash J},A}(\pi_{K_I}):=\varprojlim_{s_I} \Pi_{[s_I,r_I],I,J,\widetilde{I\backslash J},A}(\pi_{K_I}),\\	
\Pi_{\mathrm{an},r_I,I,\breve{J},\widetilde{I\backslash J},A}(\pi_{K_I}):=\varprojlim_{s_I} \Pi_{[s_I,r_I],I,\breve{J},\widetilde{I\backslash J},A}(\pi_{K_I}),\\	
\Pi_{\mathrm{an},r_I,I,\widetilde{J},\widetilde{I\backslash J},A}(\pi_{K_I}):=\varprojlim_{s_I} \Pi_{[s_I,r_I],I,\widetilde{J},\widetilde{I\backslash J},A}(\pi_{K_I})	
\end{align}	
we assume we have the sufficiently small radius as in \cite[Definition 2.2.6]{KPX}. We will keep this assumption in all similar situation involving the corresponding $\varphi_I$-modules.
\end{setting}

\begin{definition} \mbox{\bf{(After KPX \cite[Definition 2.2.6]{KPX})}}
We define in the following way the corresponding $\varphi_I$-modules over the following period rings:
\begin{align}
\Pi_{\mathrm{an},\mathrm{con},I,\breve{J},I\backslash J,A}(\pi_{K_I}):=\varinjlim_{r_I}\varprojlim_{s_I}\Pi_{[s_I,r_I],I,\breve{J},I\backslash J,A}(\pi_{K_I}),\\	
\Pi_{\mathrm{an},\mathrm{con},I,\widetilde{J},I\backslash J,A}(\pi_{K_I}):=\varinjlim_{r_I}\varprojlim_{s_I} \Pi_{[s_I,r_I],I,\widetilde{J},I\backslash J,A}(\pi_{K_I}),\\
\Pi_{\mathrm{an},\mathrm{con},I,J,\breve{I\backslash J},A}(\pi_{K_I}):=\varinjlim_{r_I}\varprojlim_{s_I}\Pi_{[s_I,r_I],I,J,\breve{I\backslash J},A}(\pi_{K_I}),\\	
\Pi_{\mathrm{an},\mathrm{con},I,\breve{J},\breve{I\backslash J},A}(\pi_{K_I}):=\varinjlim_{r_I}\varprojlim_{s_I} \Pi_{[s_I,r_I],I,\breve{J},\breve{I\backslash J},A}(\pi_{K_I}),\\
\Pi_{\mathrm{an},\mathrm{con},I,\widetilde{J},\breve{I\backslash J},A}(\pi_{K_I}):=\varinjlim_{r_I}\varprojlim_{s_I} \Pi_{[s_I,r_I],I,\widetilde{J},\breve{I\backslash J},A}(\pi_{K_I}),\\
\Pi_{\mathrm{an},\mathrm{con},I,J,\widetilde{I\backslash J},A}(\pi_{K_I}):=\varinjlim_{r_I}\varprojlim_{s_I} \Pi_{[s_I,r_I],I,J,\widetilde{I\backslash J},A}(\pi_{K_I}),\\	
\Pi_{\mathrm{an},\mathrm{con},I,\breve{J},\widetilde{I\backslash J},A}(\pi_{K_I}):=\varinjlim_{r_I}\varprojlim_{s_I} \Pi_{[s_I,r_I],I,\breve{J},\widetilde{I\backslash J},A}(\pi_{K_I}),\\	
\Pi_{\mathrm{an},\mathrm{con},I,\widetilde{J},\widetilde{I\backslash J},A}(\pi_{K_I}):=\varinjlim_{r_I}\varprojlim_{s_I} \Pi_{[s_I,r_I],I,\widetilde{J},\widetilde{I\backslash J},A}(\pi_{K_I}).	
\end{align}	
These are the corresponding base change of the corresponding $\varphi_I$-modules coming from the the ones over the following rings:
\begin{align}
\Pi_{\mathrm{an},r_I,I,\breve{J},I\backslash J,A}(\pi_{K_I}):=\varprojlim_{s_I}\Pi_{[s_I,r_I],I,\breve{J},I\backslash J,A}(\pi_{K_I}),\\	
\Pi_{\mathrm{an},r_I,I,\widetilde{J},I\backslash J,A}(\pi_{K_I}):=\varprojlim_{s_I} \Pi_{[s_I,r_I],I,\widetilde{J},I\backslash J,A}(\pi_{K_I}),\\
\Pi_{\mathrm{an},r_I,I,J,\breve{I\backslash J},A}(\pi_{K_I}):=\varprojlim_{s_I}\Pi_{[s_I,r_I],I,J,\breve{I\backslash J},A}(\pi_{K_I}),\\	
\Pi_{\mathrm{an},r_I,I,\breve{J},\breve{I\backslash J},A}(\pi_{K_I}):=\varprojlim_{s_I} \Pi_{[s_I,r_I],I,\breve{J},\breve{I\backslash J},A}(\pi_{K_I}),\\	
\Pi_{\mathrm{an},r_I,I,\widetilde{J},\breve{I\backslash J},A}(\pi_{K_I}):=\varprojlim_{s_I} \Pi_{[s_I,r_I],I,\widetilde{J},\breve{I\backslash J},A}(\pi_{K_I}),\\
\Pi_{\mathrm{an},r_I,I,J,\widetilde{I\backslash J},A}(\pi_{K_I}):=\varprojlim_{s_I} \Pi_{[s_I,r_I],I,J,\widetilde{I\backslash J},A}(\pi_{K_I}),\\	
\Pi_{\mathrm{an},r_I,I,\breve{J},\widetilde{I\backslash J},A}(\pi_{K_I}):=\varprojlim_{s_I} \Pi_{[s_I,r_I],I,\breve{J},\widetilde{I\backslash J},A}(\pi_{K_I}),\\	
\Pi_{\mathrm{an},r_I,I,\widetilde{J},\widetilde{I\backslash J},A}(\pi_{K_I}):=\varprojlim_{s_I} \Pi_{[s_I,r_I],I,\widetilde{J},\widetilde{I\backslash J},A}(\pi_{K_I}).	
\end{align}
For this latter group of period rings, we define a corresponding pseudocoherent or finite projective $\varphi_I$-module to be a corresponding pseudocoherent or finite projective module $M$ over this latter group of period rings carrying the corresponding semilinear partial Frobenius action coming from each Frobenius operator $\varphi_\alpha,\alpha\in I$ such that for each $\alpha$ we have 
\begin{align}
\varphi_\alpha^*M\otimes_{\Pi_{\mathrm{an},\{...,r_\alpha/p,...\},I,*,*,A}(\pi_{K_I})} &\Pi_{\mathrm{an},\{...,r_\alpha/p,...\},I,*,*,A}(\pi_{K_I})\\
&\overset{\sim}{\longrightarrow} M\otimes_{\Pi_{\mathrm{an},\{...,r_\alpha,...\},I,*,*,A}(\pi_{K_I})}\Pi_{\mathrm{an},\{...,r_\alpha/p,...\},I,*,*,A}(\pi_{K_I}).	
\end{align}
And we assume that altogether the partial Frobenius operators are commuting with each other. We assume all the modules involved are complete for the natural topology (mainly in the pseudocoherent situation) whose base changes to the following rings:
\begin{align}
\Pi_{[s_I,r_I],I,\breve{J},I\backslash J,A}(\pi_{K_I}),\\	
\Pi_{[s_I,r_I],I,\widetilde{J},I\backslash J,A}(\pi_{K_I}),\\
\Pi_{[s_I,r_I],I,J,\breve{I\backslash J},A}(\pi_{K_I}),\\	
\Pi_{[s_I,r_I],I,\breve{J},\breve{I\backslash J},A}(\pi_{K_I}),\\
\Pi_{[s_I,r_I],I,\widetilde{J},\breve{I\backslash J},A}(\pi_{K_I}),\\
\Pi_{[s_I,r_I],I,J,\widetilde{I\backslash J},A}(\pi_{K_I}),\\	
\Pi_{[s_I,r_I],I,\breve{J},\widetilde{I\backslash J},A}(\pi_{K_I}),\\
\Pi_{[s_I,r_I],I,\widetilde{J},\widetilde{I\backslash J},A}(\pi_{K_I})
\end{align}	
give rise to the corresponding $\varphi_I$-modules defined over these rings which defined in the following way.	
\noindent We define a corresponding pseudocoherent or finite projective $\varphi_I$-module over:
\begin{align}
\Pi_{[s_I,r_I],I,\breve{J},I\backslash J,A}(\pi_{K_I}),\\	
\Pi_{[s_I,r_I],I,\widetilde{J},I\backslash J,A}(\pi_{K_I}),\\
\Pi_{[s_I,r_I],I,J,\breve{I\backslash J},A}(\pi_{K_I}),\\	
\Pi_{[s_I,r_I],I,\breve{J},\breve{I\backslash J},A}(\pi_{K_I}),\\
\Pi_{[s_I,r_I],I,\widetilde{J},\breve{I\backslash J},A}(\pi_{K_I}),\\
\Pi_{[s_I,r_I],I,J,\widetilde{I\backslash J},A}(\pi_{K_I}),\\	
\Pi_{[s_I,r_I],I,\breve{J},\widetilde{I\backslash J},A}(\pi_{K_I}),\\
\Pi_{[s_I,r_I],I,\widetilde{J},\widetilde{I\backslash J},A}(\pi_{K_I})	
\end{align}
to be a corresponding stably-pseudocoherent or finite projective module $M$ over this latter group of period rings carrying the corresponding semilinear partial Frobenius action coming from each Frobenius operator $\varphi_\alpha,\alpha\in I$ such that for each $\alpha$ we have 
\begin{align}
\varphi_\alpha^*M\otimes_{\Pi_{\mathrm{an},...,[s_\alpha/p,r_\alpha/p],...,I,*,*,A}(\pi_{K_I})} &\Pi_{\mathrm{an},...,[s_\alpha,r_\alpha/p],...,I,*,*,A}(\pi_{K_I})\\
&\overset{\sim}{\longrightarrow} M\otimes_{\Pi_{\mathrm{an},...,[s_\alpha,r_\alpha],...,I,*,*,A}(\pi_{K_I})}\Pi_{\mathrm{an},...,[s_\alpha,r_\alpha/p],...,I,*,*,A}(\pi_{K_I}).	
\end{align}
And we assume that altogether the partial Frobenius operators are commuting with each other. We assume all the modules involved are complete for the natural topology (mainly in the pseudocoherent situation).\\
\noindent Finally we define the pseudocoherent or finite projective $\varphi_I$-modules over the corresponding period rings:
\begin{align}
\Pi_{\mathrm{an},\mathrm{con},I,\breve{J},I\backslash J,A}(\pi_{K_I}):=\varinjlim_{r_I}\varprojlim_{s_I}\Pi_{[s_I,r_I],I,\breve{J},I\backslash J,A}(\pi_{K_I}),\\	
\Pi_{\mathrm{an},\mathrm{con},I,\widetilde{J},I\backslash J,A}(\pi_{K_I}):=\varinjlim_{r_I}\varprojlim_{s_I} \Pi_{[s_I,r_I],I,\widetilde{J},I\backslash J,A}(\pi_{K_I}),\\
\Pi_{\mathrm{an},\mathrm{con},I,J,\breve{I\backslash J},A}(\pi_{K_I}):=\varinjlim_{r_I}\varprojlim_{s_I}\Pi_{[s_I,r_I],I,J,\breve{I\backslash J},A}(\pi_{K_I}),\\	
\Pi_{\mathrm{an},\mathrm{con},I,\breve{J},\breve{I\backslash J},A}(\pi_{K_I}):=\varinjlim_{r_I}\varprojlim_{s_I} \Pi_{[s_I,r_I],I,\breve{J},\breve{I\backslash J},A}(\pi_{K_I}),\\
\Pi_{\mathrm{an},\mathrm{con},I,\widetilde{J},\breve{I\backslash J},A}(\pi_{K_I}):=\varinjlim_{r_I}\varprojlim_{s_I} \Pi_{[s_I,r_I],I,\widetilde{J},\breve{I\backslash J},A}(\pi_{K_I}),\\
\Pi_{\mathrm{an},\mathrm{con},I,J,\widetilde{I\backslash J},A}(\pi_{K_I}):=\varinjlim_{r_I}\varprojlim_{s_I} \Pi_{[s_I,r_I],I,J,\widetilde{I\backslash J},A}(\pi_{K_I}),\\	
\Pi_{\mathrm{an},\mathrm{con},I,\breve{J},\widetilde{I\backslash J},A}(\pi_{K_I}):=\varinjlim_{r_I}\varprojlim_{s_I} \Pi_{[s_I,r_I],I,\breve{J},\widetilde{I\backslash J},A}(\pi_{K_I}),\\	
\Pi_{\mathrm{an},\mathrm{con},I,\widetilde{J},\widetilde{I\backslash J},A}(\pi_{K_I}):=\varinjlim_{r_I}\varprojlim_{s_I} \Pi_{[s_I,r_I],I,\widetilde{J},\widetilde{I\backslash J},A}(\pi_{K_I})
\end{align}
to be the corresponding base changes of some $\varphi_I$-modules over the rings:
\begin{align}
\Pi_{\mathrm{an},r_I,I,\breve{J},I\backslash J,A}(\pi_{K_I}):=\varprojlim_{s_I}\Pi_{[s_I,r_I],I,\breve{J},I\backslash J,A}(\pi_{K_I}),\\	
\Pi_{\mathrm{an},r_I,I,\widetilde{J},I\backslash J,A}(\pi_{K_I}):=\varprojlim_{s_I} \Pi_{[s_I,r_I],I,\widetilde{J},I\backslash J,A}(\pi_{K_I}),\\
\Pi_{\mathrm{an},r_I,I,J,\breve{I\backslash J},A}(\pi_{K_I}):=\varprojlim_{s_I}\Pi_{[s_I,r_I],I,J,\breve{I\backslash J},A}(\pi_{K_I}),\\	
\Pi_{\mathrm{an},r_I,I,\breve{J},\breve{I\backslash J},A}(\pi_{K_I}):=\varprojlim_{s_I} \Pi_{[s_I,r_I],I,\breve{J},\breve{I\backslash J},A}(\pi_{K_I}),\\	
\Pi_{\mathrm{an},r_I,I,\widetilde{J},\breve{I\backslash J},A}(\pi_{K_I}):=\varprojlim_{s_I} \Pi_{[s_I,r_I],I,\widetilde{J},\breve{I\backslash J},A}(\pi_{K_I}),\\
\Pi_{\mathrm{an},r_I,I,J,\widetilde{I\backslash J},A}(\pi_{K_I}):=\varprojlim_{s_I} \Pi_{[s_I,r_I],I,J,\widetilde{I\backslash J},A}(\pi_{K_I}),\\	
\Pi_{\mathrm{an},r_I,I,\breve{J},\widetilde{I\backslash J},A}(\pi_{K_I}):=\varprojlim_{s_I} \Pi_{[s_I,r_I],I,\breve{J},\widetilde{I\backslash J},A}(\pi_{K_I}),\\	
\Pi_{\mathrm{an},r_I,I,\widetilde{J},\widetilde{I\backslash J},A}(\pi_{K_I}):=\varprojlim_{s_I} \Pi_{[s_I,r_I],I,\widetilde{J},\widetilde{I\backslash J},A}(\pi_{K_I}),	
\end{align}
with the corresponding requirement that they are basically complete with respect to the natural topology and the partial Frobenius operators are commuting with each other.

\end{definition}

\begin{definition} \mbox{\bf{(After KPX \cite[Definition 2.2.6]{KPX})}}
Then we define a corresponding pseudocoherent or finite projective $\varphi_I$-sheaf $F$ over one of the following period rings:
\begin{align}
\Pi_{\mathrm{an},r_{I,0},I,\breve{J},I\backslash J,A}(\pi_{K_I}):=\varprojlim_{s_I}\Pi_{[s_I,r_I],I,\breve{J},I\backslash J,A}(\pi_{K_I}),\\	
\Pi_{\mathrm{an},r_{I,0},I,\widetilde{J},I\backslash J,A}(\pi_{K_I}):=\varprojlim_{s_I} \Pi_{[s_I,r_I],I,\widetilde{J},I\backslash J,A}(\pi_{K_I}),\\
\Pi_{\mathrm{an},r_{I,0},I,J,\breve{I\backslash J},A}(\pi_{K_I}):=\varprojlim_{s_I}\Pi_{[s_I,r_I],I,J,\breve{I\backslash J},A}(\pi_{K_I}),\\	
\Pi_{\mathrm{an},r_{I,0},I,\breve{J},\breve{I\backslash J},A}(\pi_{K_I}):=\varprojlim_{s_I} \Pi_{[s_I,r_I],I,\breve{J},\breve{I\backslash J},A}(\pi_{K_I}),\\	
\Pi_{\mathrm{an},r_{I,0},I,\widetilde{J},\breve{I\backslash J},A}(\pi_{K_I}):=\varprojlim_{s_I} \Pi_{[s_I,r_I],I,\widetilde{J},\breve{I\backslash J},A}(\pi_{K_I}),\\
\Pi_{\mathrm{an},r_{I,0},I,J,\widetilde{I\backslash J},A}(\pi_{K_I}):=\varprojlim_{s_I} \Pi_{[s_I,r_I],I,J,\widetilde{I\backslash J},A}(\pi_{K_I}),\\	
\Pi_{\mathrm{an},r_{I,0},I,\breve{J},\widetilde{I\backslash J},A}(\pi_{K_I}):=\varprojlim_{s_I} \Pi_{[s_I,r_I],I,\breve{J},\widetilde{I\backslash J},A}(\pi_{K_I}),\\	
\Pi_{\mathrm{an},r_{I,0},I,\widetilde{J},\widetilde{I\backslash J},A}(\pi_{K_I}):=\varprojlim_{s_I} \Pi_{[s_I,r_I],I,\widetilde{J},\widetilde{I\backslash J},A}(\pi_{K_I})	
\end{align}	
to be the corresponding compatible family of the $\varphi_I$-modules over any one $\Pi_{[s_I,r_I],I,*,*,A}(\pi_{K_I})$ of the following rings:
\begin{align}
\Pi_{[s_I,r_I],I,\breve{J},I\backslash J,A}(\pi_{K_I}),\\	
\Pi_{[s_I,r_I],I,\widetilde{J},I\backslash J,A}(\pi_{K_I}),\\
\Pi_{[s_I,r_I],I,J,\breve{I\backslash J},A}(\pi_{K_I}),\\	
\Pi_{[s_I,r_I],I,\breve{J},\breve{I\backslash J},A}(\pi_{K_I}),\\
\Pi_{[s_I,r_I],I,\widetilde{J},\breve{I\backslash J},A}(\pi_{K_I}),\\
\Pi_{[s_I,r_I],I,J,\widetilde{I\backslash J},A}(\pi_{K_I}),\\	
\Pi_{[s_I,r_I],I,\breve{J},\widetilde{I\backslash J},A}(\pi_{K_I}),\\
\Pi_{[s_I,r_I],I,\widetilde{J},\widetilde{I\backslash J},A}(\pi_{K_I})	
\end{align}
satisfying the corresponding restriction requirement and the corresponding cocycle condition as in \cite[Definition 2.2.6]{KPX}, such that $[s_I,r_I]\subset (0,r_{I,0}]$.
\end{definition}

\begin{definition}  \mbox{\bf{(After KPX \cite[Definition 2.2.12]{KPX})}}
As in \cite[Definition 2.2.12]{KPX} we impose the corresponding $\Gamma_I$-structure by adding the corresponding semilinear continuous action of $\Gamma_I$ on the modules induced from that on the period rings, which are assumed to be commuting with the action from $\varphi_I$.	
\end{definition}

\subsection{The Comparison Theorems}

\indent Now we establish some results on the comparison on the corresponding $\varphi_I$-modules we defined above.

\begin{theorem}\mbox{\bf{(After KPX \cite[Proposition 2.2.7]{KPX})}} 
Consider the following categories:\\
1. The category of all the finite projective $\varphi_I$-modules over the ring $\Pi_{\mathrm{an},r_{I,0},I,?,?,A}(\pi_{K_I})$;\\
2. The category of all the finite projective $\varphi_I$-sheaves over the ring $\Pi_{\mathrm{an},r_{I,0},I,?,?,A}(\pi_{K_I})$.\\
Then we have that the two categories are equivalent.	
\end{theorem}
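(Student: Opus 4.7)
The plan is to construct mutually inverse functors between the two categories and then promote them to an equivalence via the Fr\'echet-Stein glueing results of Section 2. Fix one of the target rings and write $\Pi_{\mathrm{an},r_{I,0},*} := \Pi_{\mathrm{an},r_{I,0},I,?,?,A}(\pi_{K_I})$ and $\Pi_{[s_I,r_I],*}:=\Pi_{[s_I,r_I],I,?,?,A}(\pi_{K_I})$ to compress notation.

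First, I would define the restriction functor $M\mapsto \{M_{[s_I,r_I]}\}$ from $\varphi_I$-modules to $\varphi_I$-sheaves by setting
\begin{align*}
M_{[s_I,r_I]}:= M\otimes_{\Pi_{\mathrm{an},r_{I,0},*}}\Pi_{[s_I,r_I],*}
\end{align*}
for each admissible closed multi-interval $[s_I,r_I]\subset (0,r_{I,0}]$. Since $M$ is finite projective and the partial Frobenius isomorphisms on $M$ are defined precisely so as to intertwine base changes along the maps $\varphi_\alpha$, the family $\{M_{[s_I,r_I]}\}$ inherits compatible partial Frobenius isomorphisms at each multi-interval; the cocycle condition of \cite[Definition 2.2.6]{KPX} is automatic because all the maps arise from a single module over $\Pi_{\mathrm{an},r_{I,0},*}$. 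Finite projectivity at each level follows from flatness of the structural maps, so this construction really lands in the category of finite projective $\varphi_I$-sheaves.

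Second, in the reverse direction, I would define
\begin{align*}
M:=\varprojlim_{s_I}M_{[s_I,r_I]},
\end{align*}
a module over $\Pi_{\mathrm{an},r_{I,0},*}=\varprojlim_{s_I}\Pi_{[s_I,r_I],*}$. The partial Frobenius isomorphisms on the sheaf are compatible with restriction and hence pass to the inverse limit, producing semilinear partial Frobenius isomorphisms on $M$ that commute pairwise and satisfy the completeness requirement built into the definition. The key input is now Proposition \ref{proposition2.19}: since $\{M_{[s_I,r_I]}\}$ consists of finite projective modules, the rank is locally constant on each quasi-compact $[s_I,r_I]$, which yields the uniform rank bound along any cofinal system of multi-intervals; combined with finite projectivity at each level this propositions delivers that $M$ is finite projective over $\Pi_{\mathrm{an},r_{I,0},*}$.

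Finally, I would verify that the two constructions are quasi-inverse. In one direction, $\varprojlim_{s_I}(M\otimes_{\Pi_{\mathrm{an},r_{I,0},*}}\Pi_{[s_I,r_I],*})\cong M$ because $M$ is finite projective and the target ring is the corresponding Fr\'echet limit, so the functor $-\otimes_{\Pi_{\mathrm{an},r_{I,0},*}}\Pi_{[s_I,r_I],*}$ commutes with the inverse limit on finite projectives (a finitely presented module is recovered from its completed base changes by Proposition \ref{proposition2.18} and Proposition \ref{proposition2.19}). In the other direction, the tautological map $(\varprojlim_{s_I}M_{[s_I,r_I]})\otimes_{\Pi_{\mathrm{an},r_{I,0},*}}\Pi_{[s_I,r_I],*}\to M_{[s_I,r_I]}$ is an isomorphism by the corresponding Kiehl-type glueing statement encoded in \cite[Corollary 2.6.8]{KL2}. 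The main obstacle I anticipate is the second of these: verifying that glueing and base-change commute in the nonnoetherian quasi-Stein context across all eight flavors of decorated rings. This is exactly where the hypotheses of Section 2.6 of \cite{KL2} (imported into our Propositions above) do the essential work, and where one must also check that the partial Frobenius structures are compatible with glueing at the level of the intertwining isomorphisms, not just at the level of the underlying modules.
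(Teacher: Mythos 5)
Your overall architecture coincides with the paper's: the forward functor is base change to each $\Pi_{[s_I,r_I],I,?,?,A}(\pi_{K_I})$, the candidate inverse is the inverse limit of the sections of the sheaf, and the decisive input is \cref{proposition2.19} (a pseudocoherent sheaf with termwise finite projective sections and a uniform bound over the quasi-compacts has finite projective global sections). So the route is the intended one.

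The step that does not hold as written is your justification of the uniformity hypothesis. You argue that since each $M_{[s_I,r_I]}$ is finite projective, ``the rank is locally constant on each quasi-compact $[s_I,r_I]$, which yields the uniform rank bound along any cofinal system of multi-intervals.'' Local constancy on each individual quasi-compact only gives a bound depending on that quasi-compact; it does not by itself produce a single bound valid along the entire cofinal system, and for a bare sheaf on a quasi-Stein space (i.e., with the Frobenius structure forgotten) no such uniform bound need exist --- this is exactly why global sections of coherent sheaves on quasi-Stein spaces are not finitely generated in general, and why the theorem is stated for $\varphi_I$-sheaves rather than arbitrary sheaves. The hypothesis of \cref{proposition2.19} (and of \cref{proposition2.18}, which concerns a uniform bound on generators, not merely on the locally constant rank function) is therefore not verified by your argument. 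The paper closes this gap using the $\varphi_I$-structure itself: the multi-annuli $[r_{1,0}/p^{k_1},r_{1,0}/p^{k_1-1}]\times\cdots\times[r_{I,0}/p^{k_I},r_{I,0}/p^{k_I-1}]$ form a $2^{|I|}$-uniform covering of the space, and the partial Frobenius isomorphisms built into the definition of a $\varphi_I$-sheaf identify the module over each such translate with the pullback of the module over the fundamental multi-annulus $[r_{1,0}/p,r_{1,0}]\times\cdots\times[r_{I,0}/p,r_{I,0}]$, so a fixed generating set there transports to every translate and the required uniform finiteness follows. You do invoke the Frobenius compatibilities when assembling the limit module, but they must be invoked again at precisely this point; with that correction the rest of your argument (full faithfulness of base change and the two quasi-inverse identifications) is consistent with the paper's proof.
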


\begin{proof}
The base change gives rise to the corresponding fully faithful functor from the first category to the second one, while to show the corresponding essential surjectivity, consider the corresponding multi-interval $[r_{1,0}/p,r_{1,0}]\times...\times [r_{I,0}/p,r_{I,0}]$ and use the corresponding Frobenius to reach all the corresponding intervals taking the general form of:
\begin{align}
[r_{1,0}/p^{k_1},r_{1,0}/p^{k_1-1}]\times...\times [r_{I,0}/p^{k_I},r_{I,0}/p^{k_I-1}],k_\alpha=1,2,...,\forall\alpha\in I.	
\end{align}
This forms a $2^{|I|}$-uniform covering of the whole space. And the corresponding uniform finiteness of the modules over each 
\begin{align}
[r_{1,0}/p^{k_1},r_{1,0}/p^{k_1-1}]\times...\times [r_{I,0}/p^{k_I},r_{I,0}/p^{k_I-1}],k_\alpha=1,2,...,\forall\alpha\in I.	
\end{align}	
could be achieved by using the corresponding partial Frobenius actions. Then we are done by applying \cref{proposition2.19}.
\end{proof}

\begin{theorem}\mbox{\bf{(After KPX \cite[Proposition 2.2.7]{KPX})}} 
Consider the following categories:\\
1. The category of all the pseudocoherent $\varphi_I$-modules over the ring $\Pi_{\mathrm{an},r_{I,0},I,?,?,A}(\pi_{K_I})$;\\
2. The category of all the pseudocoherent $\varphi_I$-sheaves over the ring $\Pi_{\mathrm{an},r_{I,0},I,?,?,A}(\pi_{K_I})$.\\
Then we have that the two categories are equivalent.	
\end{theorem}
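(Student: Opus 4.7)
The plan is to mimic the structure of the proof of the previous theorem for finite projective objects almost verbatim, substituting \Cref{proposition2.18} for \Cref{proposition2.19}. First I would set up the base-change functor from the first category to the second: a pseudocoherent $\varphi_I$-module $M$ over $\Pi_{\mathrm{an},r_{I,0},I,?,?,A}(\pi_{K_I})$ restricts, via $M_{[s_I,r_I]} := M \widehat{\otimes}_{\Pi_{\mathrm{an},r_{I,0},I,?,?,A}(\pi_{K_I})} \Pi_{[s_I,r_I],I,?,?,A}(\pi_{K_I})$, to a compatible system of stably pseudocoherent modules, which inherits the partial Frobenius isomorphisms, the restriction compatibilities, and the cocycle condition. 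Fully faithfulness then follows from the fact that global sections recover $M$ as the projective limit of the $M_{[s_I,r_I]}$ over the system of multi-intervals, which is exactly the quasi-Stein presentation of $\Pi_{\mathrm{an},r_{I,0},I,?,?,A}(\pi_{K_I})$.

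For essential surjectivity, given a pseudocoherent $\varphi_I$-sheaf $(M_{[s_I,r_I]})$, I first restrict attention to the base multi-interval $[r_{1,0}/p,r_{1,0}]\times\cdots\times [r_{I,0}/p,r_{I,0}]$ where the module $M_{[\ldots]}$ is stably pseudocoherent by assumption, and then propagate this datum to every multi-interval of the form $[r_{1,0}/p^{k_1},r_{1,0}/p^{k_1-1}]\times\cdots\times [r_{I,0}/p^{k_I},r_{I,0}/p^{k_I-1}]$ for $k_\alpha \geq 1$ by applying the partial Frobenius isomorphisms $\varphi_\alpha$. Since the Frobenius operators transport stably pseudocoherent data between isomorphic slices and do not change the rank of any finite presentation, the ranks of the local modules stay uniformly bounded across the entire $2^{|I|}$-uniform covering of $(0,r_{I,0}]$ supplied by these multi-intervals.

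At this stage I would invoke \Cref{proposition2.18} to conclude that the global sections of the sheaf over $\Pi_{\mathrm{an},r_{I,0},I,?,?,A}(\pi_{K_I})$ form a finitely generated module, from which stability of the pseudocoherent presentation (kernels of surjections from free modules being again finitely generated and compatible with base change) at each closed multi-interval promotes finite generation to pseudocoherence. The partial Frobenius operators on the sheaf descend to this global module by the universal property of the projective limit, yielding the desired pseudocoherent $\varphi_I$-module.

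The main obstacle I expect is the distinction between \emph{finitely generated} and \emph{pseudocoherent} in the globalization: \Cref{proposition2.18} only gives finite generation of global sections, so one must independently verify that syzygies of a global surjection $\Pi_{\mathrm{an},r_{I,0},I,?,?,A}(\pi_{K_I})^n \twoheadrightarrow M$ are themselves finitely generated, compatibly across the cover. This reduces to an inductive application of the same argument to the kernel sheaf, using that pseudocoherence is preserved under the Fr\'echet-Stein projective limit in the sense of \cite[Section 2.6]{KL2}; keeping track of the partial Frobenius structure while running this induction, and verifying that the natural topology on the global module is complete so that the Frobenius pullback condition makes sense, is the technical heart of the argument.
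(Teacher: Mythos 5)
Your proposal follows essentially the same route as the paper's proof: base change gives the fully faithful functor, essential surjectivity is obtained by propagating the datum from the base multi-interval via the partial Frobenii over the $2^{|I|}$-uniform covering to get the uniform bound needed for \cref{proposition2.18}, and the final passage from finite generation to pseudocoherence is handled exactly as you describe, by choosing a finite free covering and running the syzygy argument inductively (the paper delegates this step to \cite[Theorem 4.6.1, Lemma 5.4.11]{KL2}). Your explicit identification of the finitely-generated-versus-pseudocoherent gap as the technical heart is precisely the point the paper's citation is meant to cover.
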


\begin{proof}
The base change gives rise to the corresponding fully faithful functor from the first category to the second one, while to show the corresponding essential surjectivity, consider the corresponding multi-interval $[r_{1,0}/p,r_{1,0}]\times...\times [r_{I,0}/p,r_{I,0}]$ and use the corresponding Frobenius to reach all the corresponding intervals taking the general form of:
\begin{align}
[r_{1,0}/p^{k_1},r_{1,0}/p^{k_1-1}]\times...\times [r_{I,0}/p^{k_I},r_{I,0}/p^{k_I-1}],k_\alpha=1,2,...,\forall\alpha\in I.	
\end{align}
This forms a $2^{|I|}$-uniform covering of the whole space. And the corresponding uniform finiteness of the modules over each 
\begin{align}
[r_{1,0}/p^{k_1},r_{1,0}/p^{k_1-1}]\times...\times [r_{I,0}/p^{k_I},r_{I,0}/p^{k_I-1}],k_\alpha=1,2,...,\forall\alpha\in I.	
\end{align}	
could be achieved by using the corresponding partial Frobenius actions. Then we are done by applying \cref{proposition2.18}. Then one choose finite free covering to promote the finiteness to pseudocoherence as in \cite[Theorem 4.6.1, Lemma 5.4.11]{KL2}.
\end{proof}


\indent We now consider the corresponding vertical comparison for $I=\{1,2\}$:

\begin{theorem}\mbox{\bf{(After Kedlaya-Liu \cite[Theorem 5.7.5]{KL2})}} 
Consider the following categories:\\
1. The category of all the finite projective $(\varphi_I,\Gamma_I)$-modules over the ring $\Pi_{[s_I,r_{I}],I,J,I\backslash J,A}(\pi_{K_I})$;\\
2. The category of all the finite projective $(\varphi_I,\Gamma_I)$-modules over the ring $\Pi_{[s_I,r_{I}],I,J,\breve{I\backslash J},A}(\pi_{K_I})$;\\
3. The category of all the finite projective $(\varphi_I,\Gamma_I)$-modules over the ring $\Pi_{[s_I,r_{I}],I,J,\widetilde{I\backslash J},A}(\pi_{K_I})$.\\
Then we have that these categories are equivalent. Here $0< s_\alpha<r_\alpha<\infty$ for any $\alpha\in I$.
\end{theorem}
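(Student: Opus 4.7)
The plan is to chain together two equivalences, namely $1 \simeq 2$ (partial perfection) and $2 \simeq 3$ (Fr\'echet completion along the perfection), each handled by the same two-step template: exhibit the base-change functor and prove it is fully faithful and essentially surjective. In both stages the category with the richer ring receives a forgetful/invariance functor back to the smaller one, and the $(\varphi_I,\Gamma_I)$-equivariance is what forces that functor to land in the smaller category.

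For $1 \simeq 2$, the base change along
\[ \Pi_{[s_I,r_I],I,J,I\backslash J,A}(\pi_{K_I}) \longrightarrow \Pi_{[s_I,r_I],I,J,\breve{I\backslash J},A}(\pi_{K_I}) = \varinjlim_{n_\alpha,\,\alpha\in I\backslash J}\prod_{\alpha\in I\backslash J}\varphi_\alpha^{n_\alpha}\,\Pi_{[s_I,r_I],I,J,I\backslash J,A}(\pi_{K_I}) \]
is fully faithful because Hom groups are computed as partial-Frobenius invariants, and on the level of the direct limit these invariants coincide with the Hom group on the ground ring. For essential surjectivity, given a finite projective $(\varphi_I,\Gamma_I)$-module $M$ over the perfected ring, one uses that $M$ is finitely generated to find an index $n_\alpha$ so that a chosen set of generators (and the relevant matrices for the $\varphi_\alpha$ and $\Gamma_I$ actions for $\alpha\in I\backslash J$) is defined over $\prod_{\alpha\in I\backslash J}\varphi_\alpha^{n_\alpha}\,\Pi_{[s_I,r_I],I,J,I\backslash J,A}(\pi_{K_I})$. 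Applying $\prod\varphi_\alpha^{-n_\alpha}$ descends this finite package down to a module over $\Pi_{[s_I,r_I],I,J,I\backslash J,A}(\pi_{K_I})$; one checks that the partial Frobenius isomorphisms for $\alpha\in I\backslash J$ transport the descent to the whole family and commute with the $\varphi_\alpha$ for $\alpha\in J$ and the full $\Gamma_I$-action.

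For $2 \simeq 3$, the map
\[ \Pi_{[s_I,r_I],I,J,\breve{I\backslash J},A}(\pi_{K_I}) \longrightarrow \Pi_{[s_I,r_I],I,J,\widetilde{I\backslash J},A}(\pi_{K_I}) \]
has dense image by the very definition as a Fr\'echet completion. Full faithfulness follows from a computation of Hom as $\varphi_I$-invariant sections combined with the fact that taking $\varphi_\alpha$-invariants kills the completion noise, in the spirit of \cite[Theorem 5.7.5]{KL2}: a $\varphi_\alpha$-equivariant element over the completion that vanishes on the ground ring's $\varphi_\alpha$-fixed part must vanish, since the completed ring is uniform and the partial Frobenius action is contracting on the pole part of the annulus $[s_I,r_I]$. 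For essential surjectivity, given a finite projective $(\varphi_I,\Gamma_I)$-module $\widetilde{M}$ over $\Pi_{[s_I,r_I],I,J,\widetilde{I\backslash J},A}(\pi_{K_I})$, one picks a projection matrix and a system of transition matrices for $\varphi_I$ and $\Gamma_I$, approximates them $\|\cdot\|_{t_I}$-close (for all $t_I\in[s_I,r_I]$) by entries coming from the dense subring $\Pi_{[s_I,r_I],I,J,\breve{I\backslash J},A}(\pi_{K_I})$, and then iteratively uses the contracting action of $\varphi_\alpha$ for $\alpha\in I\backslash J$ to correct the approximate structure into an exact one whose base change recovers $\widetilde{M}$.

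The main obstacle will be the essential surjectivity in the $2\simeq 3$ step: producing an exact $\varphi_I$-equivariant descent from an approximate one requires a convergent Newton-type iteration that simultaneously controls all Fr\'echet seminorms $\|\cdot\|_{t_I}$ for $t_I\in [s_I,r_I]$ and all partial Frobenius directions. This is exactly the technical engine of \cite[Theorem 5.7.5]{KL2}, and one checks that the argument goes through in our multi-indexed setting because (i) each partial Frobenius $\varphi_\alpha$ acts by a strict contraction on its own variable's pole part, (ii) these partial actions commute, so the corrections in different directions can be performed independently, and (iii) the commuting $\Gamma_I$-action preserves the seminorms and therefore is automatically respected by the limit. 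Combining the two equivalences gives the three-way equivalence of categories in the statement.
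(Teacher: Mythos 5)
Your proposal is correct in substance but takes a different route from the paper. The paper does not run the perfection/completion descent itself: it first observes that since only the variables in $I\backslash J$ are being perfected or completed, one may absorb the untouched $J$-variables (together with $A$) into the coefficient ring and forget down to the $(\varphi_{I\backslash J},\Gamma_{I\backslash J})$-structure; the statement then becomes exactly the known one-variable \emph{relative} comparison between the imperfect, perfected ($\breve{\ }$) and completed-perfected ($\widetilde{\ }$) Robba rings, which the paper simply cites as \cite[Theorem 4.4]{KP} (itself after \cite[Theorem 5.7.5]{KL2}); the remaining $(\varphi_J,\Gamma_J)$-operators are transported across that equivalence because they commute with everything in sight. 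You instead re-derive the content of that citation directly in the multivariate setting, splitting it into the direct-limit step $1\simeq 2$ (descent of a finite package to a finite Frobenius level, which is the standard and essentially formal argument) and the completion step $2\simeq 3$ (density plus a Frobenius-equivariant Newton iteration). What your approach buys is a self-contained argument that does not depend on recognizing the statement as a relative instance of a known theorem, and it would adapt to $|I|>2$; what the paper's approach buys is that the genuinely hard analytic step --- the essential surjectivity across the Fr\'echet completion, where one must control all seminorms $\|\cdot\|_{t_I}$ simultaneously while correcting the approximate Frobenius structure --- is delegated entirely to the literature rather than sketched. If you keep your route, that iteration is the one place where you owe actual estimates (your full-faithfulness discussion for $2\simeq 3$ also only addresses injectivity on Hom's, not surjectivity); the cleanest fix is to note, as the paper does, that after absorbing the $J$-variables into the coefficients your two steps are verbatim the relative single-variable case already proved in \cite{KP}.
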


\indent This is the consequence of the following theorem:

\begin{theorem} \mbox{\bf{(After Kedlaya-Liu \cite[Theorem 5.7.5]{KL2})}}
Consider the following categories:\\
1. The category of all the finite projective $(\varphi_2,\Gamma_2)$-modules over the ring $\Pi_{[s_I,r_{I}],I,{1},I\backslash J,A}(\pi_{K_I})$;\\
2. The category of all the finite projective $(\varphi_2,\Gamma_2)$-modules over the ring $\Pi_{[s_I,r_{I}],I,{1},\breve{I\backslash J},A}(\pi_{K_I})$;\\
3. The category of all the finite projective $(\varphi_2,\Gamma_2)$-modules over the ring $\Pi_{[s_I,r_{I}],I,{1},\widetilde{I\backslash J},A}(\pi_{K_I})$.\\
Then we have that these categories are equivalent. Here $0< s_\alpha<r_\alpha<\infty$ for any $\alpha\in I$.
\end{theorem}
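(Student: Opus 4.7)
The plan is to prove the equivalence by verifying that the two base-change functors $C_1 \to C_2$ and $C_2 \to C_3$ induced by the ring inclusions
\[
\Pi_{[s_I,r_I],I,\{1\},I\backslash J,A}(\pi_{K_I}) \hookrightarrow \Pi_{[s_I,r_I],I,\{1\},\breve{I\backslash J},A}(\pi_{K_I}) \hookrightarrow \Pi_{[s_I,r_I],I,\{1\},\widetilde{I\backslash J},A}(\pi_{K_I})
\]
are fully faithful and essentially surjective. The key leverage is that since $I\backslash J = \{2\}$ here, the perfections $\breve{\phantom{J}}$ and $\widetilde{\phantom{J}}$ are built using exactly the Frobenius $\varphi_2$ that appears in the module structure, so we are in the classical situation of Frobenius descent treated in \cite{KL2}, Theorem 5.7.5, and it suffices to adapt that argument in a relative fashion over the first factor.

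For full faithfulness I would reduce to the statement that $\mathrm{Hom}$ between two finite projective $(\varphi_2,\Gamma_2)$-modules is unchanged under base change. Writing $\mathrm{Hom}(M,N) = (M^\vee \otimes N)^{\varphi_2=1,\Gamma_2}$, this reduces to checking that the two inclusions preserve $\varphi_2$- and $\Gamma_2$-invariants. The first inclusion is a filtered colimit of $\varphi_2$-twists, on which the $\varphi_2$-fixed points are preserved tautologically; the second is a dense inclusion into a Fréchet completion along norms that are $\varphi_2$-contracting, so standard separation arguments force equality of invariants there as well.

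For essential surjectivity $C_2 \to C_1$, suppose $M$ is finite projective over the breve ring with a compatible $\varphi_2$-isomorphism. Using the explicit direct limit
\[
\Pi_{[s_I,r_I],I,\{1\},\breve{I\backslash J},A}(\pi_{K_I}) = \varinjlim_{n\geq 0} \varphi_2^n \Pi_{[s_I,r_I],I,\{1\},I\backslash J,A}(\pi_{K_I})
\]
and finite generation, a generating set of $M$ already lies in some stage $n$, yielding a finite submodule $M_n$. The Frobenius isomorphism $\varphi_2^\ast M \simeq M$ then allows one to replace $M_n$ by $\varphi_2^{-n}(M_n)$, producing a finite projective descent $M_0$ over $\Pi_{[s_I,r_I],I,\{1\},I\backslash J,A}(\pi_{K_I})$. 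Commutation of $\Gamma_2$ with $\varphi_2$ ensures that the semilinear $\Gamma_2$-action also descends; projectivity is preserved because faithfully flat descent applies along the colimit inclusion.

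The main obstacle will be the essential surjectivity $C_3 \to C_2$, which requires moving a module defined over the Fréchet completion back to one over the uncompleted direct limit. The plan, following \cite{KL2}, Theorem 5.7.5 together with the finiteness package \cref{proposition2.18} and \cref{proposition2.19}, is to represent $M$ by an idempotent $e$ on a finite free module over the tilde ring, approximate $e$ by an idempotent $e'$ over the dense breve ring with operator-norm gap smaller than $1$ so that $e'$ is conjugate to $e$ by an invertible element and hence cuts out a finite projective module of the same rank, and then verify that the $\varphi_2$- and $\Gamma_2$-structures pull back compatibly. The difficulty is controlling the approximation uniformly across all multi-intervals $[s_I,r_I]$ so that the resulting $\varphi_2$-isomorphism holds before completion rather than only after; this is where the uniform rank bound from \cref{proposition2.19} and the quasi-Stein architecture developed in Section 3 are essential to glue the local approximations into a global descent.
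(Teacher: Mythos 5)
Your overall architecture is sound and is essentially the internal mechanics of the standard imperfect-versus-perfect comparison, but it is not what the paper does: the paper's entire proof is a one-line reduction, observing that since only the factor $I\setminus J=\{2\}$ is being perfected and completed, and the $(\varphi_2,\Gamma_2)$-structure lives on exactly that factor, the statement \emph{is} the relative comparison theorem for finite projective $(\varphi,\Gamma)$-modules with the first variable absorbed into the coefficient ring, i.e.\ it cites \cite[Theorem 4.4]{KP} and stops. Your proposal instead re-derives (in outline) what that citation packages: full faithfulness via invariance of $H^0$, descent along the colimit of Frobenius twists, and idempotent approximation across the dense inclusion into the Fr\'echet completion. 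What your route buys is self-containedness and visibility of where the analysis happens; what the paper's route buys is that all of these steps are already verified in \cite{KP} in the relative setting, so one only needs to check that the two-variable rings here satisfy the hypotheses there. Two soft spots in your sketch that you would need to repair if you carried it out: (i) the claim that projectivity descends by ``faithfully flat descent along the colimit inclusion'' is misplaced --- the descent from stage $n$ to stage $0$ is effected by the Frobenius pullback identification (twisting by $\varphi_2^{-n}$), not by flat descent, and since $\varphi_2$ moves the interval $[s_2,r_2]$ to $[s_2/p,r_2/p]$ the ``fixed points'' and ``$\varphi_2^{-n}(M_n)$'' manipulations must be phrased through the base-change isomorphism of the $\varphi_2$-module rather than literally; and (ii) the assertion that the inclusions ``preserve $\varphi_2$- and $\Gamma_2$-invariants'' is itself a nontrivial $H^0$-comparison of the same depth as the theorem, so it cannot simply be invoked to get full faithfulness --- in \cite{KL2} and \cite{KP} it is proved as part of the same induction. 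Finally, the uniformity-over-all-multi-intervals concern you raise at the end is not actually needed for this statement, which is fixed at a single closed multi-interval $[s_I,r_I]$; that issue only arises for the bundle/module comparison over $\Pi_{\mathrm{an},r_{I,0}}$ treated elsewhere in the paper.
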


\begin{proof}
In this situation this is just the corresponding relative comparison for finite projective $(\varphi,\Gamma)$ modules, which is \cite[Theorem 4.4]{KP}.	
\end{proof}

\indent We now consider the corresponding vertical comparison in the following context for $I=\{1,2\}$:

\begin{theorem}\mbox{\bf{(After Kedlaya-Liu \cite[Theorem 5.7.5]{KL2})}}
Consider the following categories:\\
1. The category of all the pseudocoherent $(\varphi_I,\Gamma_I)$-modules over the ring $\Pi_{[s_I,r_{I}],I,J,I\backslash J,A}(\pi_{K_I})$;\\
2. The category of all the pseudocoherent $(\varphi_I,\Gamma_I)$-modules over the ring $\Pi_{[s_I,r_{I}],I,J,\breve{I\backslash J},A}(\pi_{K_I})$;\\
3. The category of all the pseudocoherent $(\varphi_I,\Gamma_I)$-modules over the ring $\Pi_{[s_I,r_{I}],I,J,\widetilde{I\backslash J},A}(\pi_{K_I})$.\\
Then we have that these categories are equivalent. Here $0< s_\alpha<r_\alpha<\infty$ for any $\alpha\in I$.
\end{theorem}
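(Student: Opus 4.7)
The plan is to mirror the strategy of the preceding finite projective comparison, reducing the general pseudocoherent $(\varphi_I,\Gamma_I)$-comparison between $\Pi_{[s_I,r_I],I,J,I\backslash J,A}(\pi_{K_I})$, $\Pi_{[s_I,r_I],I,J,\breve{I\backslash J},A}(\pi_{K_I})$, and $\Pi_{[s_I,r_I],I,J,\widetilde{I\backslash J},A}(\pi_{K_I})$ to the two-variable relative situation treated by \cite{KP} and upgraded as in \cite[Theorem 5.7.5]{KL2}. First I would observe that base change along the perfection maps yields three obvious functors, and the content is the essential surjectivity; by factoring the perfection one index at a time, one reduces to the case where $I=\{1,2\}$ and the perfectization is performed only along the single coordinate in $I\backslash J$. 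In that reduced situation the remaining $(\varphi_1,\Gamma_1)$-action on the still-imperfect direction behaves as a trivially varying parameter, and one is reduced to the relative $(\varphi_2,\Gamma_2)$-descent between $\Pi_{[s,r],1,A'}(\pi_{K_2})$, its $\breve{}$ and its $\widetilde{}$ versions over a parameter Banach algebra $A'$ absorbing the first factor.

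Second I would promote the finite projective case (which is the content of the just-proved theorem for $(\varphi_I,\Gamma_I)$-modules in this reduced $I=\{1,2\}$ setting) to the pseudocoherent case by the standard device used in the proof of the pseudocoherent version of the $\varphi_I$-module/$\varphi_I$-sheaf equivalence stated earlier in this section: for a pseudocoherent $(\varphi_I,\Gamma_I)$-module $M$, choose locally a finite free surjection $F_\bullet\to M$ and observe that the comparison functor is exact and preserves finite free objects; the kernel at each stage is again pseudocoherent, so one can iterate and conclude by a truncation argument that base change is fully faithful and essentially surjective on pseudocoherent objects, exactly as in the pattern of \cite[Theorem 4.6.1, Lemma 5.4.11]{KL2} which was invoked in the previous theorem. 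The compatibility with the $\Gamma_I$-action is automatic because the free resolutions can be chosen $\varphi_I$-equivariantly after a Frobenius pullback and then descended using the continuity of $\Gamma_I$ together with the commutativity of its action with $\varphi_I$.

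Third I would assemble the one-index-at-a-time reductions back into the full $I\backslash J$ statement. Here the key point is that the two perfections $\breve{(\cdot)}$ and $\widetilde{(\cdot)}$ in disjoint coordinates commute with each other at the level of the rings defined in the previous section, so the three-step tower $J\cup(I\backslash J)\ \leadsto\ J\cup\breve{I\backslash J}\ \leadsto\ J\cup\widetilde{I\backslash J}$ can be filled in by iterating the single-coordinate equivalence $|I\backslash J|$ times. Since each step is an equivalence on pseudocoherent $(\varphi_I,\Gamma_I)$-modules, the composite is as well.

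The hard part I expect will be the pseudocoherence in the Fr\'echet completion step passing from $\breve{I\backslash J}$ to $\widetilde{I\backslash J}$: unlike the finite projective case, stability of pseudocoherence under the completion $(\cdot)^{\wedge}$ along the infinite tower of partial Frobenii is not automatic, and one has to check that the finite free presentations chosen on intervals $[s_I,r_I]$ can be taken uniformly in the tower so that the limit remains pseudocoherent. This is exactly the role played by \cite[Proposition 2.6.17]{KL2} via the $2^{|I|}$-uniform covering invoked in Propositions \ref{proposition2.18} and \ref{proposition2.19}, so I expect the required uniform bound on the rank of the free presentation to drop out of the partial Frobenius action, just as in the pseudocoherent $\varphi_I$-sheaf case treated above. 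Once that uniformity is in hand, the descent from $\widetilde{I\backslash J}$ down to $\breve{I\backslash J}$ and then to $I\backslash J$ follows by the same argument as in \cite[Theorem 5.7.5]{KL2}, completing the proof.
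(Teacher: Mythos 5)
Your core reduction is the same as the paper's: both arguments pass from the full $(\varphi_I,\Gamma_I)$-statement to the two-variable relative situation in which the unperfected coordinate (together with $A$) is absorbed into the coefficient ring, so that only a single relative $(\varphi_2,\Gamma_2)$-comparison between an imperfect interval ring, its $\breve{}$-version and its $\widetilde{}$-version remains. Where you diverge is in how that remaining input is obtained. The paper simply cites the already-established pseudocoherent relative comparison (\cite[Proposition 5.44, Proposition 5.51]{T2}, the pseudocoherent upgrade of \cite[Theorem 4.4]{KP}), so its proof is a one-line reduction plus a citation. You instead propose to re-derive the pseudocoherent case from the finite projective case by choosing $\varphi_I$-equivariant finite free resolutions and iterating, in the style of \cite[Theorem 4.6.1, Lemma 5.4.11]{KL2}. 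That is a legitimate alternative and makes the logical dependence on the finite projective theorem explicit, at the cost of having to justify exactness of the base-change functors and the equivariant choice of presentations — precisely the points that \cite{T2} packages for you. One caveat: the uniformity issue you isolate as "the hard part" is not really addressed by Propositions \ref{proposition2.18} and \ref{proposition2.19}, which concern pseudocoherent sheaves over the Fr\'echet--Stein rings $\Pi_{\mathrm{an},r_{I,0},\dots}$ glued from varying closed multi-intervals; the present statement is over a single fixed interval $[s_I,r_I]$, and the completion in question is the Fr\'echet completion of the tower of partial Frobenius twists inside that one interval. The tool that controls pseudocoherence across that completion is the descent formalism of \cite{KP}/\cite{T2}, not the interval-covering propositions, so you should route that step through the former rather than the latter.
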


\indent This is the consequence of the following theorem:

\begin{theorem} \mbox{\bf{(After Kedlaya-Liu \cite[Theorem 5.7.5]{KL2})}}
Consider the following categories:\\
1. The category of all the pseudocoherent $(\varphi_2,\Gamma_2)$-modules over the ring $\Pi_{[s_I,r_{I}],I,{1},I\backslash J,A}(\pi_{K_I})$;\\
2. The category of all the pseudocoherent $(\varphi_2,\Gamma_2)$-modules over the ring $\Pi_{[s_I,r_{I}],I,{1},\breve{I\backslash J},A}(\pi_{K_I})$;\\
3. The category of all the pseudocoherent $(\varphi_2,\Gamma_2)$-modules over the ring $\Pi_{[s_I,r_{I}],I,{1},\widetilde{I\backslash J},A}(\pi_{K_I})$.\\
Then we have that these categories are equivalent. Here $0< s_\alpha<r_\alpha<\infty$ for any $\alpha\in I$.
\end{theorem}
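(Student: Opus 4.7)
The plan is to follow the exact template of the finite projective case just established and then upgrade from finite projective to pseudocoherent via the resolution technique of Kedlaya--Liu. I will first reduce to a single-variable relative situation, then transfer finite free presentations across the perfection tower using the already-proved finite projective comparison, and finally glue the cokernels to obtain the desired pseudocoherent equivalence.

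As a first step, I will exploit the asymmetry built into the statement: the categories only carry actions of $\varphi_2$ and $\Gamma_2$, so the coordinate $\alpha = 1$ is inert and may be absorbed into the coefficient algebra. After this reduction the three rings in the statement take the form of relative Robba rings in the single variable $\alpha = 2$ with coefficients in an enlarged Banach base $A'$, and the desired equivalence becomes the pseudocoherent analogue of the relative $(\varphi,\Gamma)$-module comparison of \cite[Theorem 4.4]{KP}, applied on the $I\backslash J$ side of the perfection tower.

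In the second step, I will invoke the preceding theorem (the finite projective version just proved) to dispose of the finite free case. For a general pseudocoherent $(\varphi_2,\Gamma_2)$-module $M$ over one of the three rings, I will choose, over each quasi-compact closed multi-interval $[s_I,r_I]$, a finite free presentation via \cite[Theorem 4.6.1]{KL2}, apply the finite projective equivalence termwise to the two finite free pieces and their morphism, and then take the cokernel in the target category, using \cite[Lemma 5.4.11]{KL2} to ensure compatibility of the comparison with cokernel formation between finite projective modules. The resulting local modules carry induced $\varphi_2$ and $\Gamma_2$-actions because these operators commute with the partial Frobenii perfecting the $I\backslash J$ coordinates and are continuous for the relevant topologies; gluing them along the multi-interval system then yields a pseudocoherent object on the target side and a quasi-inverse to the base change functor.

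The main obstacle will be ensuring that pseudocoherence is preserved through the Fr\'echet completion defining the $\widetilde{I\backslash J}$ version of the rings, since a priori the completion of a colimit of pseudocoherent modules need not remain pseudocoherent. I will circumvent this by invoking the uniform-rank control provided by \cref{proposition2.18} and \cref{proposition2.19}: the finite free presentations can be chosen with uniformly bounded ranks and uniform length over all closed multi-intervals $[s_I,r_I]$, so finiteness of the global sections of the cokernel is guaranteed after completion, producing a genuinely pseudocoherent $(\varphi_2,\Gamma_2)$-module on the target side and completing the equivalence.
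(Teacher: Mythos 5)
Your opening reduction is exactly the one the paper makes, although the paper leaves it implicit: since the objects carry only $\varphi_2$ and $\Gamma_2$ and the three rings differ only in the treatment of the coordinate in $I\backslash J=\{2\}$, the variable $T_1$ is inert and can be absorbed into the Banach coefficient ring, so the statement becomes a relative one-variable comparison between the imperfect, $\breve{.}$ and $\widetilde{.}$ Robba rings. At that point the paper simply cites the pseudocoherent relative comparison of \cite[Proposition 5.44, Proposition 5.51]{T2} as a black box (its proof text even still says ``finite projective'', an evident slip), whereas you go on to sketch a proof of that comparison by bootstrapping from the finite projective case via finite free presentations, cokernel formation, and the uniform-rank control of \cref{proposition2.18} and \cref{proposition2.19}. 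Your route is therefore more self-contained and is in fact the same resolution technique (\cite[Theorem 4.6.1, Lemma 5.4.11]{KL2}) that the paper invokes elsewhere to pass from finite projective to pseudocoherent objects, so the two arguments ultimately rest on the same machinery; what the citation buys the paper is not having to re-verify the delicate points, and what your version buys is an explicit mechanism.

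One step of your sketch needs more care, in the descent direction (from the $\breve{.}$ or $\widetilde{.}$ ring back to the imperfect ring). You propose to ``apply the finite projective equivalence termwise to the two finite free pieces and their morphism,'' but the finite projective equivalence is an equivalence of categories of $(\varphi_2,\Gamma_2)$-modules, so it only transports objects and morphisms of that category: a finite free presentation of a pseudocoherent $(\varphi_2,\Gamma_2)$-module chosen merely as a module presentation need not consist of $(\varphi_2,\Gamma_2)$-equivariant maps between finite free $(\varphi_2,\Gamma_2)$-modules, and a bare matrix over the perfected ring has no reason to descend. One must first rigidify the presentation equivariantly --- this is precisely the role of the finite free covering argument of \cite[Lemma 5.4.11]{KL2} (and of the corresponding steps in \cite[Proposition 5.44, Proposition 5.51]{T2}), where the surjection from a finite free module is corrected using the Frobenius structure before the finite projective comparison is applied. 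With that adjustment, and with the uniform boundedness you already invoke to control pseudocoherence through the Fr\'echet completion defining the $\widetilde{I\backslash J}$ ring, your argument goes through.
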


\begin{proof}
In this situation this is just the corresponding relative comparison for finite projective $(\varphi,\Gamma)$ modules, which is \cite[Proposition 5.44, Proposition 5.51]{T2}.	
\end{proof}

\begin{corollary}
Assume $I=\{1,2\}$. Consider the following categories:\\
1. The category of all the pseudocoherent $(\varphi_I,\Gamma_I)$-modules over the ring $\Pi_{[s_I,r_{I}],I,J,I\backslash J,A}(\pi_{K_I})$;\\
2. The category of all the pseudocoherent $(\varphi_I,\Gamma_I)$-modules over the ring $\Pi_{[s_I,r_{I}],I,J,\breve{I\backslash J},A}(\pi_{K_I})$;\\
3. The category of all the pseudocoherent $(\varphi_I,\Gamma_I)$-modules over the ring $\Pi_{[s_I,r_{I}],I,J,\widetilde{I\backslash J},A}(\pi_{K_I})$.\\
Then we have that these categories are equivalent. Here $0< s_\alpha\leq r_\alpha/p<\infty$ for any $\alpha\in I$. \\
Consider the following categories:\\
1. The category of all the finite projective $(\varphi_I,\Gamma_I)$-modules over the ring $\Pi_{[s_I,r_{I}],I,J,I\backslash J,A}(\pi_{K_I})$;\\
2. The category of all the finite projective $(\varphi_I,\Gamma_I)$-modules over the ring $\Pi_{[s_I,r_{I}],I,J,\breve{I\backslash J},A}(\pi_{K_I})$;\\
3. The category of all the finite projective $(\varphi_I,\Gamma_I)$-modules over the ring $\Pi_{[s_I,r_{I}],I,J,\widetilde{I\backslash J},A}(\pi_{K_I})$.\\
Then we have that these categories are equivalent. Here $0< s_\alpha\leq r_\alpha/p<\infty$ for any $\alpha\in I$.	
\end{corollary}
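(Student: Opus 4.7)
The plan is to combine the pseudocoherent and finite projective versions of the two preceding theorems into a single statement under the refined interval condition $s_\alpha \leq r_\alpha/p$, which is exactly what is required to make the full partial Frobenius action internally meaningful on each of the three period rings without needing to change intervals along the way.

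First I would directly invoke the preceding finite projective theorem to get the equivalence of categories of finite projective $(\varphi_I,\Gamma_I)$-modules over the three rings $\Pi_{[s_I,r_{I}],I,J,I\backslash J,A}(\pi_{K_I})$, $\Pi_{[s_I,r_{I}],I,J,\breve{I\backslash J},A}(\pi_{K_I})$, and $\Pi_{[s_I,r_{I}],I,J,\widetilde{I\backslash J},A}(\pi_{K_I})$. Since $s_\alpha \leq r_\alpha/p$ in particular implies $s_\alpha < r_\alpha < \infty$, the hypothesis of that theorem is satisfied and the equivalence transfers directly. The analogous step with the pseudocoherent version of the preceding theorem yields the pseudocoherent half of the corollary.

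The additional content the corollary delivers beyond its two antecedents is the refinement of the interval condition to $s_\alpha \leq r_\alpha/p$, which ensures the pulled-back interval $[s_\alpha/p,r_\alpha/p]$ lies inside $[s_\alpha,r_\alpha]$ for every $\alpha\in I$; this is exactly what allows the Frobenius descent isomorphism $\varphi_\alpha^* M \otimes \cdots \overset{\sim}{\longrightarrow} M \otimes \cdots$ appearing in the definition of a $(\varphi_I,\Gamma_I)$-module to be formulated on a single multi-interval and compared functorially across all three rings, simultaneously for every $\alpha$. Without this sharpening, one of the partial Frobenii would formally push data to an interval not contained in $[s_I,r_I]$, making the compatibility condition ambiguous.

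The main (minor) obstacle is verifying that the base-change functors of the preceding theorems really do preserve the extra $\varphi_I$-structure and not merely the $(\varphi_2,\Gamma_2)$-structure that appeared in their restatements via \cite[Theorem 4.4]{KP} and \cite[Propositions 5.44, 5.51]{T2}. This reduces to the observation that $\varphi_1$ commutes with $\varphi_2$ and with $\Gamma_I$ by the definition of a $(\varphi_I,\Gamma_I)$-module, so the $\varphi_1$-semilinear isomorphism is automatically a morphism of the underlying $(\varphi_2,\Gamma_2)$-module; full faithfulness of the comparison functor then transports it uniquely to each of the perfected rings, and invertibility is inherited by applying the same argument to its inverse. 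A short bookkeeping check confirms that all the required commutation relations between $\varphi_1$, $\varphi_2$, and $\Gamma_I$ are preserved as relations, yielding the corollary in both regularity classes.
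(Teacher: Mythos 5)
Your proposal is correct and matches the paper's (implicit) argument: the corollary is stated without proof precisely because it is the direct specialization of the two preceding theorems to the subrange $0<s_\alpha\leq r_\alpha/p<\infty$ of the range $0<s_\alpha<r_\alpha<\infty$ they already cover. Your additional remarks on why the refined interval condition is needed to formulate the partial Frobenius isomorphisms on a single multi-interval, and on transporting the full $\varphi_I$-structure rather than only the $(\varphi_2,\Gamma_2)$-structure, are sensible bookkeeping consistent with the paper's setup.
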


\newpage

\section{$B_{I}$-Pairs and Intermediate Objects in Rigid Family}

\subsection{Mixed-type Hodge Structures}

\noindent Now we work with the corresponding $B$-pairs and some mixed-type objects as in \cite{Ber1} and \cite{Nak1}.

\begin{definition}
Define $B^+_{\mathrm{dR},I}:=\mathbb{C}_p[[t_1,...,t_I]]$, and define  $B_{\mathrm{dR},I}:=\mathbb{C}_p[[t_1,...,t_I]][t_1^{-1},...,t_I^{-1}]$, and similarly we have the obvious higher dimensional analog $B_{e,I}$ of the corresponding period ring $B_{e}$ which could be defined as:
\begin{align}
\varinjlim_{k_1}...	\varinjlim_{k_I}t_1^{-k_1}...t_I^{-k_I}\bigcap_{i=1,...,|I|}(\varprojlim_{n\rightarrow \infty}B_\mathrm{max,1}^+/p^n\widehat{\otimes}_{\mathbb{Z}_p}...\widehat{\otimes}_{\mathbb{Z}_p}B_{\mathrm{max},I}^+/p^n)^{\varphi_i-p^{k_i}}.
\end{align}

.	
\end{definition}

\indent Then after Bloch-Kato \cite{BK1} we have the following fundamental sequence:

\begin{proposition}
We have the higher dimensional generalization of the corresponding Bloch-Kato fundamental sequence:
\[
\xymatrix@C+0pc@R+0pc{
0 \ar[r] \ar[r] \ar[r] &\mathbb{Q}_p \ar[r] \ar[r] \ar[r]  &B_{e,I}\bigoplus B^+_{\mathrm{dR},I} \ar[r] \ar[r] \ar[r] &B_{\mathrm{dR},I} \ar[r] \ar[r] \ar[r] &0
}
\]
induced by the corresponding short exact sequence:
\[
\xymatrix@C+0pc@R+0pc{
0 \ar[r] \ar[r] \ar[r] &\mathbb{Q}_p \ar[r] \ar[r] \ar[r]  &B_{e,I}\bigoplus \mathbb{C}_p[[t_1,...,t_I]] \ar[r] \ar[r] \ar[r] &\mathbb{C}_p[[t_1,...,t_I]][t_1^{-1},...,t_I^{-1}] \ar[r] \ar[r] \ar[r] &0.
}
\]	
\end{proposition}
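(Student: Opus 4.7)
The plan is to deduce the statement from the classical Bloch--Kato fundamental sequence \cite{BK1} by induction on $|I|$, exploiting the product-type presentations $B^+_{\mathrm{dR},I}=\mathbb{C}_p[[t_1,\ldots,t_I]]$ and $B_{\mathrm{dR},I}=B^+_{\mathrm{dR},I}[t_1^{-1},\ldots,t_I^{-1}]$. The second displayed sequence in the proposition is literally the first one with $B^+_{\mathrm{dR},I}$ and $B_{\mathrm{dR},I}$ substituted by their explicit descriptions, so the content reduces to exactness of a single sequence. For $|I|=1$ this is the theorem of Bloch--Kato, giving the base case.

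For the inductive step, write $I = I'\sqcup\{i_0\}$. Injectivity of $\mathbb{Q}_p \hookrightarrow B_{e,I}\oplus B^+_{\mathrm{dR},I}$ is immediate, since $\mathbb{Q}_p$ embeds diagonally through $B^+_{\mathrm{dR},I}$. For surjectivity onto $B_{\mathrm{dR},I}$, I would decompose a Laurent series $f$ into its regular part (already in $B^+_{\mathrm{dR},I}$) and its principal part, then lift each polar monomial into $B_{e,I}$ by applying the classical one-variable surjectivity iteratively along each axis $t_i$, absorbing successive regular remainders into the $B^+_{\mathrm{dR},I}$-summand. This is permitted because the $\varinjlim$ in the definition of $B_{e,I}$ allows arbitrary polar orders in each $t_i$ and the twist factor $t_1^{-k_1}\cdots t_I^{-k_I}$ guarantees the lifted polar piece actually lands in $B_{e,I}$.

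The main obstacle is middle exactness, namely the identity $B_{e,I}\cap B^+_{\mathrm{dR},I}=\mathbb{Q}_p$ inside $B_{\mathrm{dR},I}$. An element $x$ of the intersection is simultaneously holomorphic in all variables and, up to the direct limit, a joint Frobenius eigenvector in the completed tensor product. I would first argue that holomorphicity forces the twist exponents $k_i$ in the definition of $B_{e,I}$ to vanish, so $x$ lies in the joint $\varphi_i=1$ locus of $\widehat{\bigotimes}_i B^+_{\mathrm{max},i}$. Taking $\varphi_{i_0}=1$-invariants first and using flatness of this completed tensor product over the relevant base, the classical identity $(B^+_{\mathrm{max}})^{\varphi=1}=\mathbb{Q}_p$ reduces the problem to the inductive hypothesis in the smaller index set $I'$. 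Assembling the three exactness properties then yields the claim, and the passage from the $\mathbb{C}_p[[t_1,\ldots,t_I]]$-version to the $B^+_{\mathrm{dR},I}$-version is notational.
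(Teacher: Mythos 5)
The paper states this proposition with no proof at all, so there is nothing of the author's to compare your argument against; your outline is the natural strategy one would attempt. That said, two of your steps are asserted rather than established, and they are precisely where all of the difficulty of the multivariate statement lives. In the middle-exactness step you claim that taking $\varphi_{i_0}=1$-invariants first "and using flatness" reduces $B_{e,I}\cap B^+_{\mathrm{dR},I}=\mathbb{Q}_p$ to the index set $I'$. But passage to a $\varphi_{i_0}$-eigenspace does not commute with the completed tensor product $\varprojlim_n B^+_{\mathrm{max},1}/p^n\widehat{\otimes}\cdots\widehat{\otimes}B^+_{\mathrm{max},I}/p^n$ for free: identities of the shape $(B^+_{\mathrm{max},1}\widehat{\otimes}B^+_{\mathrm{max},2})^{\varphi_1=1}=B^+_{\mathrm{max},2}$ are exactly the Drinfeld-lemma-type invariance computations that occupy much of \cite{CKZ} and \cite{PZ}, and they require real work (a topological basis argument and control of the completion). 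Likewise your preliminary claim that membership in $B^+_{\mathrm{dR},I}$ forces the twist exponents $k_i$ to vanish is already a nontrivial one-variable statement (it amounts to $t^{-k}(B^+_{\mathrm{max}})^{\varphi=p^{k}}\cap B^+_{\mathrm{dR}}=(B^+_{\mathrm{max}})^{\varphi=1}$) and needs a separate multivariate justification, not least because the paper never even constructs the map from the completed tensor product to $\mathbb{C}_p[[t_1,\ldots,t_I]][t_1^{-1},\ldots,t_I^{-1}]$ with respect to which "holomorphic" is to be interpreted.

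The surjectivity step has a second, more concrete gap. In one variable the correction procedure terminates because $t^{-k}B^+_{\mathrm{dR}}/B^+_{\mathrm{dR}}$ is a finite-dimensional $\mathbb{C}_p$-vector space. In several variables $B_{\mathrm{dR},I}/B^+_{\mathrm{dR},I}$ is infinite-dimensional: a single element can have principal part $\sum_{j\ge 0}c_j\,t_1^{-1}t_2^{j}$, so "lifting each polar monomial and absorbing the regular remainder" generates an infinite sequence of corrections whose convergence in $B_{e,I}$ --- a direct limit of eigenspaces, not itself complete in any evident topology --- must be justified. One would rather lift an entire polar block at once via a completed tensor product of the one-variable surjections, but that again invokes the exactness properties of $\widehat{\otimes}$ that are the real content here. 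In short, the skeleton is reasonable, but the proposal silently assumes the two multivariate lemmas that a complete proof (which the paper itself does not supply) would have to prove.
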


\begin{setting}
In what follows, we assume that the corresponding $A$ to be a rigid affinoid in rigid analytic geometry over $\mathbb{Q}_p$.	
\end{setting}

\begin{definition}
We now consider the following rings:
\begin{align}
B^+_{\mathrm{dR},I'}	\widehat{\otimes}\Pi_{[s_I,r_I],I,J,I\backslash J,A},\\	
B^+_{\mathrm{dR},I'}	\widehat{\otimes}\Pi_{[s_I,r_I],I,\breve{J},I\backslash J,A},\\	
B^+_{\mathrm{dR},I'}	\widehat{\otimes}\Pi_{[s_I,r_I],I,\widetilde{J},I\backslash J,A},\\
B^+_{\mathrm{dR},I'}	\widehat{\otimes}\Pi_{[s_I,r_I],I,J,\breve{I\backslash J},A},\\	
B^+_{\mathrm{dR},I'}	\widehat{\otimes}\Pi_{[s_I,r_I],I,\breve{J},\breve{I\backslash J},A},\\
B^+_{\mathrm{dR},I'}	\widehat{\otimes}\Pi_{[s_I,r_I],I,\widetilde{J},\breve{I\backslash J},A},\\
B^+_{\mathrm{dR},I'}	\widehat{\otimes}\Pi_{[s_I,r_I],I,J,\widetilde{I\backslash J},A},\\	
B^+_{\mathrm{dR},I'}	\widehat{\otimes}\Pi_{[s_I,r_I],I,\breve{J},\widetilde{I\backslash J},A},\\
B^+_{\mathrm{dR},I'}	\widehat{\otimes}\Pi_{[s_I,r_I],I,\widetilde{J},\widetilde{I\backslash J},A}
\end{align}

with

\begin{align}
B^+_{\mathrm{dR},I'}	\widehat{\otimes}\Pi_{[s_I,r_I],I,J,I\backslash J,A}[t_1^{-1},...,t_{I'}^{-1}],\\	
B^+_{\mathrm{dR},I'}	\widehat{\otimes}\Pi_{[s_I,r_I],I,\breve{J},I\backslash J,A}[t_1^{-1},...,t_{I'}^{-1}],\\	
B^+_{\mathrm{dR},I'}	\widehat{\otimes}\Pi_{[s_I,r_I],I,\widetilde{J},I\backslash J,A}[t_1^{-1},...,t_{I'}^{-1}],\\
B^+_{\mathrm{dR},I'}	\widehat{\otimes}\Pi_{[s_I,r_I],I,J,\breve{I\backslash J},A}[t_1^{-1},...,t_{I'}^{-1}],\\	
B^+_{\mathrm{dR},I'}	\widehat{\otimes}\Pi_{[s_I,r_I],I,\breve{J},\breve{I\backslash J},A}[t_1^{-1},...,t_{I'}^{-1}],\\
B^+_{\mathrm{dR},I'}	\widehat{\otimes}\Pi_{[s_I,r_I],I,\widetilde{J},\breve{I\backslash J},A}[t_1^{-1},...,t_{I'}^{-1}],\\
B^+_{\mathrm{dR},I'}	\widehat{\otimes}\Pi_{[s_I,r_I],I,J,\widetilde{I\backslash J},A}[t_1^{-1},...,t_{I'}^{-1}],\\	
B^+_{\mathrm{dR},I'}	\widehat{\otimes}\Pi_{[s_I,r_I],I,\breve{J},\widetilde{I\backslash J},A}[t_1^{-1},...,t_{I'}^{-1}],\\
B^+_{\mathrm{dR},I'}	\widehat{\otimes}\Pi_{[s_I,r_I],I,\widetilde{J},\widetilde{I\backslash J},A}[t_1^{-1},...,t_{I'}^{-1}],
\end{align}

with 

\begin{align}
B_{e,I'}	\widehat{\otimes}\Pi_{[s_I,r_I],I,J,I\backslash J,A},\\	
B_{e,I'}	\widehat{\otimes}\Pi_{[s_I,r_I],I,\breve{J},I\backslash J,A},\\	
B_{e,I'}	\widehat{\otimes}\Pi_{[s_I,r_I],I,\widetilde{J},I\backslash J,A},\\
B_{e,I'}	\widehat{\otimes}\Pi_{[s_I,r_I],I,J,\breve{I\backslash J},A},\\	
B_{e,I'}	\widehat{\otimes}\Pi_{[s_I,r_I],I,\breve{J},\breve{I\backslash J},A},\\
B_{e,I'}	\widehat{\otimes}\Pi_{[s_I,r_I],I,\widetilde{J},\breve{I\backslash J},A},\\
B_{e,I'}	\widehat{\otimes}\Pi_{[s_I,r_I],I,J,\widetilde{I\backslash J},A},\\	
B_{e,I'}	\widehat{\otimes}\Pi_{[s_I,r_I],I,\breve{J},\widetilde{I\backslash J},A},\\
B_{e,I'} \widehat{\otimes}\Pi_{[s_I,r_I],I,\widetilde{J},\widetilde{I\backslash J},A}
\end{align}

with

\begin{align}
\varprojlim_{s_I}  B^+_{\mathrm{dR},I'}	\widehat{\otimes}\Pi_{[s_I,r_I],I,J,I\backslash J,A},\\	
\varprojlim_{s_I}  B^+_{\mathrm{dR},I'}	\widehat{\otimes}\Pi_{[s_I,r_I],I,\breve{J},I\backslash J,A},\\	
\varprojlim_{s_I} B^+_{\mathrm{dR},I'}	\widehat{\otimes}\Pi_{[s_I,r_I],I,\widetilde{J},I\backslash J,A},\\
\varprojlim_{s_I} B^+_{\mathrm{dR},I'}	\widehat{\otimes}\Pi_{[s_I,r_I],I,J,\breve{I\backslash J},A},\\	
\varprojlim_{s_I} B^+_{\mathrm{dR},I'}	\widehat{\otimes}\Pi_{[s_I,r_I],I,\breve{J},\breve{I\backslash J},A},\\
\varprojlim_{s_I} B^+_{\mathrm{dR},I'}	\widehat{\otimes}\Pi_{[s_I,r_I],I,\widetilde{J},\breve{I\backslash J},A},\\
\varprojlim_{s_I} B^+_{\mathrm{dR},I'}	\widehat{\otimes}\Pi_{[s_I,r_I],I,J,\widetilde{I\backslash J},A},\\	
\varprojlim_{s_I} B^+_{\mathrm{dR},I'}	\widehat{\otimes}\Pi_{[s_I,r_I],I,\breve{J},\widetilde{I\backslash J},A},\\
\varprojlim_{s_I} B^+_{\mathrm{dR},I'}	\widehat{\otimes}\Pi_{[s_I,r_I],I,\widetilde{J},\widetilde{I\backslash J},A}
\end{align}

with

\begin{align}
\varprojlim_{s_I} B^+_{\mathrm{dR},I'}	\widehat{\otimes}\Pi_{[s_I,r_I],I,J,I\backslash J,A}[t_1^{-1},...,t_{I'}^{-1}],\\	
\varprojlim_{s_I} B^+_{\mathrm{dR},I'}	\widehat{\otimes}\Pi_{[s_I,r_I],I,\breve{J},I\backslash J,A}[t_1^{-1},...,t_{I'}^{-1}],\\	
\varprojlim_{s_I} B^+_{\mathrm{dR},I'}	\widehat{\otimes}\Pi_{[s_I,r_I],I,\widetilde{J},I\backslash J,A}[t_1^{-1},...,t_{I'}^{-1}],\\
\varprojlim_{s_I} B^+_{\mathrm{dR},I'}	\widehat{\otimes}\Pi_{[s_I,r_I],I,J,\breve{I\backslash J},A}[t_1^{-1},...,t_{I'}^{-1}],\\	
\varprojlim_{s_I} B^+_{\mathrm{dR},I'}	\widehat{\otimes}\Pi_{[s_I,r_I],I,\breve{J},\breve{I\backslash J},A}[t_1^{-1},...,t_{I'}^{-1}],\\
\varprojlim_{s_I} B^+_{\mathrm{dR},I'}	\widehat{\otimes}\Pi_{[s_I,r_I],I,\widetilde{J},\breve{I\backslash J},A}[t_1^{-1},...,t_{I'}^{-1}],\\
\varprojlim_{s_I} B^+_{\mathrm{dR},I'}	\widehat{\otimes}\Pi_{[s_I,r_I],I,J,\widetilde{I\backslash J},A}[t_1^{-1},...,t_{I'}^{-1}],\\	
\varprojlim_{s_I} B^+_{\mathrm{dR},I'}	\widehat{\otimes}\Pi_{[s_I,r_I],I,\breve{J},\widetilde{I\backslash J},A}[t_1^{-1},...,t_{I'}^{-1}],\\
\varprojlim_{s_I} B^+_{\mathrm{dR},I'}	\widehat{\otimes}\Pi_{[s_I,r_I],I,\widetilde{J},\widetilde{I\backslash J},A}[t_1^{-1},...,t_{I'}^{-1}],
\end{align}

with 

\begin{align}
\varprojlim_{s_I} B_{e,I'}	\widehat{\otimes}\Pi_{[s_I,r_I],I,J,I\backslash J,A},\\	
\varprojlim_{s_I} B_{e,I'}	\widehat{\otimes}\Pi_{[s_I,r_I],I,\breve{J},I\backslash J,A},\\	
\varprojlim_{s_I} B_{e,I'}	\widehat{\otimes}\Pi_{[s_I,r_I],I,\widetilde{J},I\backslash J,A},\\
\varprojlim_{s_I} B_{e,I'}	\widehat{\otimes}\Pi_{[s_I,r_I],I,J,\breve{I\backslash J},A},\\	
\varprojlim_{s_I} B_{e,I'}	\widehat{\otimes}\Pi_{[s_I,r_I],I,\breve{J},\breve{I\backslash J},A},\\
\varprojlim_{s_I} B_{e,I'}	\widehat{\otimes}\Pi_{[s_I,r_I],I,\widetilde{J},\breve{I\backslash J},A},\\
\varprojlim_{s_I} B_{e,I'}	\widehat{\otimes}\Pi_{[s_I,r_I],I,J,\widetilde{I\backslash J},A},\\	
\varprojlim_{s_I} B_{e,I'}	\widehat{\otimes}\Pi_{[s_I,r_I],I,\breve{J},\widetilde{I\backslash J},A},\\
\varprojlim_{s_I} B_{e,I'} \widehat{\otimes}\Pi_{[s_I,r_I],I,\widetilde{J},\widetilde{I\backslash J},A}
\end{align}

with

\begin{align}
\varinjlim_{r_I}\varprojlim_{s_I}  B^+_{\mathrm{dR},I'}	\widehat{\otimes}\Pi_{[s_I,r_I],I,J,I\backslash J,A},\\	
\varinjlim_{r_I}\varprojlim_{s_I}  B^+_{\mathrm{dR},I'}	\widehat{\otimes}\Pi_{[s_I,r_I],I,\breve{J},I\backslash J,A},\\	
\varinjlim_{r_I}\varprojlim_{s_I} B^+_{\mathrm{dR},I'}	\widehat{\otimes}\Pi_{[s_I,r_I],I,\widetilde{J},I\backslash J,A},\\
\varinjlim_{r_I}\varprojlim_{s_I} B^+_{\mathrm{dR},I'}	\widehat{\otimes}\Pi_{[s_I,r_I],I,J,\breve{I\backslash J},A},\\	
\varinjlim_{r_I}\varprojlim_{s_I} B^+_{\mathrm{dR},I'}	\widehat{\otimes}\Pi_{[s_I,r_I],I,\breve{J},\breve{I\backslash J},A},\\
\varinjlim_{r_I}\varprojlim_{s_I} B^+_{\mathrm{dR},I'}	\widehat{\otimes}\Pi_{[s_I,r_I],I,\widetilde{J},\breve{I\backslash J},A},\\
\varinjlim_{r_I}\varprojlim_{s_I} B^+_{\mathrm{dR},I'}	\widehat{\otimes}\Pi_{[s_I,r_I],I,J,\widetilde{I\backslash J},A},\\	
\varinjlim_{r_I}\varprojlim_{s_I} B^+_{\mathrm{dR},I'}	\widehat{\otimes}\Pi_{[s_I,r_I],I,\breve{J},\widetilde{I\backslash J},A},\\
\varinjlim_{r_I}\varprojlim_{s_I} B^+_{\mathrm{dR},I'}	\widehat{\otimes}\Pi_{[s_I,r_I],I,\widetilde{J},\widetilde{I\backslash J},A}
\end{align}

with

\begin{align}
\varinjlim_{r_I}\varprojlim_{s_I} B^+_{\mathrm{dR},I'}	\widehat{\otimes}\Pi_{[s_I,r_I],I,J,I\backslash J,A}[t_1^{-1},...,t_{I'}^{-1}],\\	
\varinjlim_{r_I}\varprojlim_{s_I} B^+_{\mathrm{dR},I'}	\widehat{\otimes}\Pi_{[s_I,r_I],I,\breve{J},I\backslash J,A}[t_1^{-1},...,t_{I'}^{-1}],\\	
\varinjlim_{r_I}\varprojlim_{s_I} B^+_{\mathrm{dR},I'}	\widehat{\otimes}\Pi_{[s_I,r_I],I,\widetilde{J},I\backslash J,A}[t_1^{-1},...,t_{I'}^{-1}],\\
\varinjlim_{r_I}\varprojlim_{s_I} B^+_{\mathrm{dR},I'}	\widehat{\otimes}\Pi_{[s_I,r_I],I,J,\breve{I\backslash J},A}[t_1^{-1},...,t_{I'}^{-1}],\\	
\varinjlim_{r_I}\varprojlim_{s_I} B^+_{\mathrm{dR},I'}	\widehat{\otimes}\Pi_{[s_I,r_I],I,\breve{J},\breve{I\backslash J},A}[t_1^{-1},...,t_{I'}^{-1}],\\
\varinjlim_{r_I}\varprojlim_{s_I} B^+_{\mathrm{dR},I'}	\widehat{\otimes}\Pi_{[s_I,r_I],I,\widetilde{J},\breve{I\backslash J},A}[t_1^{-1},...,t_{I'}^{-1}],\\
\varinjlim_{r_I}\varprojlim_{s_I} B^+_{\mathrm{dR},I'}	\widehat{\otimes}\Pi_{[s_I,r_I],I,J,\widetilde{I\backslash J},A}[t_1^{-1},...,t_{I'}^{-1}],\\	
\varinjlim_{r_I}\varprojlim_{s_I} B^+_{\mathrm{dR},I'}	\widehat{\otimes}\Pi_{[s_I,r_I],I,\breve{J},\widetilde{I\backslash J},A}[t_1^{-1},...,t_{I'}^{-1}],\\
\varinjlim_{r_I}\varprojlim_{s_I} B^+_{\mathrm{dR},I'}	\widehat{\otimes}\Pi_{[s_I,r_I],I,\widetilde{J},\widetilde{I\backslash J},A}[t_1^{-1},...,t_{I'}^{-1}],
\end{align}

with 

\begin{align}
\varinjlim_{r_I}\varprojlim_{s_I} B_{e,I'}	\widehat{\otimes}\Pi_{[s_I,r_I],I,J,I\backslash J,A},\\	
\varinjlim_{r_I}\varprojlim_{s_I} B_{e,I'}	\widehat{\otimes}\Pi_{[s_I,r_I],I,\breve{J},I\backslash J,A},\\	
\varinjlim_{r_I}\varprojlim_{s_I} B_{e,I'}	\widehat{\otimes}\Pi_{[s_I,r_I],I,\widetilde{J},I\backslash J,A},\\
\varinjlim_{r_I}\varprojlim_{s_I} B_{e,I'}	\widehat{\otimes}\Pi_{[s_I,r_I],I,J,\breve{I\backslash J},A},\\	
\varinjlim_{r_I}\varprojlim_{s_I} B_{e,I'}	\widehat{\otimes}\Pi_{[s_I,r_I],I,\breve{J},\breve{I\backslash J},A},\\
\varinjlim_{r_I}\varprojlim_{s_I} B_{e,I'}	\widehat{\otimes}\Pi_{[s_I,r_I],I,\widetilde{J},\breve{I\backslash J},A},\\
\varinjlim_{r_I}\varprojlim_{s_I} B_{e,I'}	\widehat{\otimes}\Pi_{[s_I,r_I],I,J,\widetilde{I\backslash J},A},\\	
\varinjlim_{r_I}\varprojlim_{s_I} B_{e,I'}	\widehat{\otimes}\Pi_{[s_I,r_I],I,\breve{J},\widetilde{I\backslash J},A},\\
\varinjlim_{r_I}\varprojlim_{s_I} B_{e,I'} \widehat{\otimes}\Pi_{[s_I,r_I],I,\widetilde{J},\widetilde{I\backslash J},A}.
\end{align}\\
	
\end{definition}

\indent Now we combine the construction in the following coherent way following \cite[Section 2]{Ber1}, \cite[Definition 2.2]{Nak1} and \cite[Definition 2.2.6]{KPX}.

\begin{definition} 
We define a $B_{I'}$-$(\varphi_I,\Gamma_I)$-module over
\begin{center}
$(B^+_{\mathrm{dR},I'}	\widehat{\otimes}\Pi_{[s_I,r_I],I,J,I\backslash J,A},B^+_{\mathrm{dR},I'}	\widehat{\otimes}\Pi_{[s_I,r_I],I,*,*,A}[t_1^{-1},...,t_I'^{-1}],B_{e,I'}	\widehat{\otimes}\Pi_{[s_I,r_I],I,*,*,A})$	
\end{center}
to be a triplet of finite projective modules:
\begin{align}
(M_e,M_{\mathrm{dR}},M^+_{\mathrm{dR}})	
\end{align}
over 
\begin{center}
$(B^+_{\mathrm{dR},I'}	\widehat{\otimes}\Pi_{[s_I,r_I],I,*,*,A},B^+_{\mathrm{dR},I'}	\widehat{\otimes}\Pi_{[s_I,r_I],I,*,*,A}[t_1^{-1},...,t_I'^{-1}],B_{e,I'}	\widehat{\otimes}\Pi_{[s_I,r_I],I,*,*,A})$	
\end{center}
such that we have glueing datum along :
\begin{align}
B^+_{\mathrm{dR},I'}	\widehat{\otimes}\Pi_{[s_I,r_I],I,*,*,A}\rightarrow B^+_{\mathrm{dR},I'}	\widehat{\otimes}\Pi_{[s_I,r_I],I,*,*,A}[t_1^{-1},...,t_I'^{-1}] \leftarrow B_{e,I'}	\widehat{\otimes}\Pi_{[s_I,r_I],I,*,*,A}.	
\end{align}
And this carry the corresponding relative Galois action of:
\begin{align}
\mathrm{Gal}_{\mathbb{Q}_p,1}\times...\times \mathrm{Gal}_{\mathbb{Q}_p,I'}	
\end{align}
on the multi de Rham period rings which is semilinear. And we have that the three modules involved are relative $(\varphi_I,\Gamma_I)$-modules relative to
\begin{align}
B^+_{\mathrm{dR},I'},B_{\mathrm{dR},I'},B_{e,I'}.	
\end{align}

\end{definition}

\begin{definition}
We define a pseudocoherent $B_{I'}$-$(\varphi_I,\Gamma_I)$-module over
\begin{center}
$(B^+_{\mathrm{dR},I'}	\widehat{\otimes}\Pi_{[s_I,r_I],I,J,I\backslash J,A},B^+_{\mathrm{dR},I'}	\widehat{\otimes}\Pi_{[s_I,r_I],I,*,*,A}[t_1^{-1},...,t_I'^{-1}],B_{e,I'}	\widehat{\otimes}\Pi_{[s_I,r_I],I,*,*,A})$	
\end{center}
to be a triplet of stably-pseudocoherent modules:
\begin{align}
(M_e,M_{\mathrm{dR}},M^+_{\mathrm{dR}})	
\end{align}
over 
\begin{center}
$(B^+_{\mathrm{dR},I'}	\widehat{\otimes}\Pi_{[s_I,r_I],I,*,*,A},B^+_{\mathrm{dR},I'}	\widehat{\otimes}\Pi_{[s_I,r_I],I,*,*,A}[t_1^{-1},...,t_I'^{-1}],B_{e,I'}	\widehat{\otimes}\Pi_{[s_I,r_I],I,*,*,A})$	
\end{center}
such that this is glueing datum along :
\begin{align}
B^+_{\mathrm{dR},I'}	\widehat{\otimes}\Pi_{[s_I,r_I],I,*,*,A}\rightarrow B^+_{\mathrm{dR},I'}	\widehat{\otimes}\Pi_{[s_I,r_I],I,*,*,A}[t_1^{-1},...,t_I'^{-1}] \leftarrow B_{e,I'}	\widehat{\otimes}\Pi_{[s_I,r_I],I,*,*,A}.	
\end{align}
And this carry the corresponding relative Galois action of:
\begin{align}
\mathrm{Gal}_{\mathbb{Q}_p,1}\times...\times \mathrm{Gal}_{\mathbb{Q}_p,I'}		
\end{align}
on the multi de Rham period rings which is semilinear. And we have that the three modules involved are relative pseudocoherent $(\varphi_I,\Gamma_I)$-modules relative to
\begin{align}
B^+_{\mathrm{dR},I'},B_{\mathrm{dR},I'},B_{e,I'}.	
\end{align}

\end{definition}

\begin{definition}
We define a $B_{I'}$-$(\varphi_I,\Gamma_I)$-module over
\begin{center}
$(B^+_{\mathrm{dR},I'}	\widehat{\otimes}\Pi_{\mathrm{an},r_{I},I,J,I\backslash J,A},B^+_{\mathrm{dR},I'}	\widehat{\otimes}\Pi_{[s_I,r_I],I,*,*,A}[t_1^{-1},...,t_I'^{-1}],B_{e,I'}	\widehat{\otimes}\Pi_{\mathrm{an},r_{I},I,*,*,A})$	
\end{center}
to be a triplet of finite projective modules:
\begin{align}
(M_e,M_{\mathrm{dR}},M^+_{\mathrm{dR}})	
\end{align}
over 
\begin{center}
$(B^+_{\mathrm{dR},I'}	\widehat{\otimes}\Pi_{\mathrm{an},r_{I},I,*,*,A},B^+_{\mathrm{dR},I'}	\widehat{\otimes}\Pi_{\mathrm{an},r_{I},I,*,*,A}[t_1^{-1},...,t_I'^{-1}],B_{e,I'}	\widehat{\otimes}\Pi_{\mathrm{an},r_{I},I,*,*,A})$	
\end{center}
such that this is glueing datum along :
\begin{align}
B^+_{\mathrm{dR},I'}	\widehat{\otimes}\Pi_{\mathrm{an},r_{I},I,*,*,A}\rightarrow B^+_{\mathrm{dR},I'}	\widehat{\otimes}\Pi_{\mathrm{an},r_{I},I,*,*,A}[t_1^{-1},...,t_I'^{-1}] \leftarrow B_{e,I'}	\widehat{\otimes}\Pi_{\mathrm{an},r_{I},I,*,*,A}.	
\end{align}
And this carry the corresponding relative Galois action of:
\begin{align}
\mathrm{Gal}_{\mathbb{Q}_p,1}\times...\times \mathrm{Gal}_{\mathbb{Q}_p,I'}		
\end{align}
on the multi de Rham period rings which is semilinear. And we have that the three modules involved are relative $(\varphi_I,\Gamma_I)$-modules relative to
\begin{align}
B^+_{\mathrm{dR},I'},B_{\mathrm{dR},I'},B_{e,I'}.	
\end{align}

\end{definition}

\begin{definition}
We define a pseudocoherent $B_{I'}$-$(\varphi_I,\Gamma_I)$-module over
\begin{center}
$(B^+_{\mathrm{dR},I'}	\widehat{\otimes}\Pi_{\mathrm{an},r_{I},I,J,I\backslash J,A},B^+_{\mathrm{dR},I'}	\widehat{\otimes}\Pi_{[s_I,r_I],I,*,*,A}[t_1^{-1},...,t_I'^{-1}],B_{e,I'}	\widehat{\otimes}\Pi_{\mathrm{an},r_{I},I,*,*,A})$	
\end{center}
to be a triplet of stably pseudocoherent modules:
\begin{align}
(M_e,M_{\mathrm{dR}},M^+_{\mathrm{dR}})	
\end{align}
over 
\begin{center}
$(B^+_{\mathrm{dR},I'}	\widehat{\otimes}\Pi_{\mathrm{an},r_{I},I,*,*,A},B^+_{\mathrm{dR},I'}	\widehat{\otimes}\Pi_{\mathrm{an},r_{I},I,*,*,A}[t_1^{-1},...,t_I'^{-1}],B_{e,I'}	\widehat{\otimes}\Pi_{\mathrm{an},r_{I},I,*,*,A})$	
\end{center}
such that this is glueing datum along :
\begin{align}
B^+_{\mathrm{dR},I'}	\widehat{\otimes}\Pi_{\mathrm{an},r_{I},I,*,*,A}\rightarrow B^+_{\mathrm{dR},I'}	\widehat{\otimes}\Pi_{\mathrm{an},r_{I},I,*,*,A}[t_1^{-1},...,t_I'^{-1}] \leftarrow B_{e,I'}	\widehat{\otimes}\Pi_{\mathrm{an},r_{I},I,*,*,A}.	
\end{align}
And this carry the corresponding relative Galois action of:
\begin{align}
\mathrm{Gal}_{\mathbb{Q}_p,1}\times...\times \mathrm{Gal}_{\mathbb{Q}_p,I'}		
\end{align}
on the multi de Rham period rings which is semilinear. And we have that the three modules involved are relative pseudocoherent $(\varphi_I,\Gamma_I)$-modules relative to
\begin{align}
B^+_{\mathrm{dR},I'},B_{\mathrm{dR},I'},B_{e,I'}.	
\end{align}\\

\end{definition}

\subsection{Fundamental Comparison on the Mixed-Type Objects}

\begin{proposition} \mbox{\bf{(After Berger \cite[Th\'eor\`eme A]{Ber1})}}
Let $I'$ be a set consisting of two elements and $I$ is empty, then we have that the category of all the $B_{I'}$-$(\varphi_I,\Gamma_I)$ modules over 
\begin{center}
$(B^+_{\mathrm{dR},I'}	\widehat{\otimes}\Pi_{[s_I,r_I],I,J,I\backslash J,A},B^+_{\mathrm{dR},I'}	\widehat{\otimes}\Pi_{[s_I,r_I],I,J,I\backslash J,A}[t_1^{-1},...,t_I'^{-1}],B_{e,I'}	\widehat{\otimes}\Pi_{[s_I,r_I],I,J,I\backslash J,A})$	
\end{center}
is equivalent to the category of all the $(\varphi_{I'},\Gamma_{I'})$-modules in the finite projective setting.
\end{proposition}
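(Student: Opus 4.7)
The plan is to reduce to the classical one-variable Berger equivalence and then iterate it, exploiting that the two factors in $I'=\{1,2\}$ act independently. Since $I=\emptyset$, the Robba ring layer $\Pi_{[s_I,r_I],I,J,I\setminus J,A}$ collapses to $A$, so the triple reduces to
\begin{align*}
\bigl(B^+_{\mathrm{dR},I'}\widehat{\otimes}_{\mathbb{Q}_p}A,\; B^+_{\mathrm{dR},I'}\widehat{\otimes}_{\mathbb{Q}_p}A[t_1^{-1},t_2^{-1}],\; B_{e,I'}\widehat{\otimes}_{\mathbb{Q}_p}A\bigr),
\end{align*}
and a $B_{I'}$-pair is just a triple of finite projective modules glued along the natural maps with a semilinear action of $\mathrm{Gal}_{\mathbb{Q}_p,1}\times\mathrm{Gal}_{\mathbb{Q}_p,2}$. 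The target is finite projective $(\varphi_{I'},\Gamma_{I'})$-modules over the appropriate relative two-variable Robba ring (as introduced in the preceding section) with commuting partial Frobenii and partial $\Gamma$-actions.

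The first step is to observe the Künneth-type decomposition $B^+_{\mathrm{dR},I'}=B^+_{\mathrm{dR},1}\widehat{\otimes}_{\mathbb{Q}_p}B^+_{\mathrm{dR},2}$, and analogously for $B_{e,I'}$ and $B_{\mathrm{dR},I'}$. This lets one view a $B_{I'}$-pair as a $B_{\{1\}}$-pair relative to the coefficient ring $B^\star_2\widehat{\otimes}_{\mathbb{Q}_p}A$ (for $\star\in\{+,\mathrm{full},e\}$), equipped with a commuting semilinear action of $\mathrm{Gal}_{\mathbb{Q}_p,2}$ along the second-variable period rings. The second step then applies Berger's Théorème~A (in its relative form over the Banach coefficient ring $B^\star_2\widehat{\otimes}_{\mathbb{Q}_p}A$) to this inner $B_{\{1\}}$-pair, producing a finite projective $(\varphi_1,\Gamma_1)$-module $M_1$ over the relative one-variable Robba ring with $\mathrm{Gal}_{\mathbb{Q}_p,2}$ still acting semilinearly through the second-variable period coefficients.

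The third step is to repeat the same maneuver on the second coordinate: viewing $M_1$ as carrying a $B_{\{2\}}$-pair structure, another application of Berger's theorem converts the remaining Galois action into a $(\varphi_2,\Gamma_2)$-module structure on a finite projective module over the two-variable Robba ring. Commutativity of the resulting two partial $(\varphi,\Gamma)$-actions follows because the original $\mathrm{Gal}_{\mathbb{Q}_p,1}$- and $\mathrm{Gal}_{\mathbb{Q}_p,2}$-actions commuted. Essential surjectivity and full faithfulness then reduce to the one-variable case at each stage. The symmetric argument (first $2$, then $1$) produces the same output up to canonical isomorphism, giving a well-defined functor independent of ordering.

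The main obstacle is the rigorous bookkeeping for the relative version of Berger's theorem: Théorème~A is classically stated over a finite extension of $\mathbb{Q}_p$, and we need it over the coefficient ring $B^\star_2\widehat{\otimes}_{\mathbb{Q}_p}A$, which is not itself of the classical form. To justify the intermediate step one must check that Berger's construction of the Robba-ring model of a $B$-pair is suitably functorial and flat with respect to base change in the coefficient ring, in particular that completed tensor product with $B^\star_2$ and with $A$ commutes with the various projective limits, direct limits, and localisations used to pass from $B_{e},B^+_{\mathrm{dR}},B_{\mathrm{dR}}$ to the relative Robba ring. Once this relative enhancement of \cite{Ber1} is in place, the iteration goes through and the equivalence is immediate.
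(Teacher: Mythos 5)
Your overall strategy coincides with the paper's: peel off one Galois factor at a time, passing through the intermediate category of mixed $B_{\{1\}}$-$(\varphi_{\{2\}},\Gamma_{\{2\}})$-objects (the paper states exactly this two-step factorization, in the two propositions immediately following the statement). The reduction of the Robba-ring layer to $A$ when $I=\emptyset$ and the commutativity bookkeeping are also fine.

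The genuine gap is the step you yourself flag as the ``main obstacle'': you invoke a relative form of Berger's Th\'eor\`eme A over the coefficient ring $B^{\star}_{2}\widehat{\otimes}_{\mathbb{Q}_p}A$ and then declare that once this is in place the iteration goes through. That relative enhancement is not available off the shelf: $B^{+}_{\mathrm{dR},2}\widehat{\otimes}A$ and $B_{e,2}\widehat{\otimes}A$ are not affinoid or even Banach $\mathbb{Q}_p$-algebras of the type handled by the relative $(\varphi,\Gamma)$-module literature, so ``Berger over $B^{\star}_{2}\widehat{\otimes}A$'' is precisely the content that has to be proved, not a black box one can cite. The paper avoids this by a different technical route for each one-variable step: it first base changes to the de Rham ring and works modulo $t^{k}$ to reduce to a d\'evissage in the style of \cite[Theorem 2.18]{KP} (see also \cite[Proposition 3.8]{T3}), establishing the equivalence between the $B$-pair datum and a $(\varphi,\Gamma)$-module over the \emph{perfect} period rings (those with the $\widetilde{.}$ accent), and then descends from the perfect to the imperfect Robba rings using \cite[Theorem 4.4]{KP}. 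In other words, the substitute for your unproven relative Berger theorem is the Kedlaya--Pottharst machinery of perfect period rings plus perfect-to-imperfect descent. As written, your proposal is a correct outline of the reduction but is incomplete at its central point; to close it you would either need to carry out the mod-$t^{k}$ d\'evissage and descent as in \cite{KP}, or independently establish the functoriality and flatness claims about completed tensor products with $B^{\star}_{2}$ that you list at the end, which amounts to the same work.
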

\begin{proof}
One has the result after the following two propositions.	
\end{proof}

\indent We first consider the following comparison:
\begin{proposition}
Let $I'=\{1,2\}$ be a set consisting of two elements and $I$ is empty, then we have that the category of all the $B_{I'}$-$(\varphi_I,\Gamma_I)$ modules over 
\begin{center}
$(B^+_{\mathrm{dR},I'}	\widehat{\otimes}\Pi_{[s_I,r_I],I,J,I\backslash J,A},B^+_{\mathrm{dR},I'}	\widehat{\otimes}\Pi_{[s_I,r_I],I,J,I\backslash J,A}[t_1^{-1},...,t_I'^{-1}],B_{e,I'}	\widehat{\otimes}\Pi_{[s_I,r_I],I,J,I\backslash J,A})$	
\end{center}
is equivalent to the category of all the $B_{\{1\}}$-$(\varphi_{\{2\}},\Gamma_{\{2\}})$ modules.
\end{proposition}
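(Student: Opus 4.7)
The plan is to reduce the two-index statement to a single-index application of Berger's theorem, applied along the second coordinate with the first coordinate serving as a parameterizing base. The starting point is the observation that for $I'=\{1,2\}$ the period rings factor as completed tensor products, i.e.\ $B^+_{\mathrm{dR},\{1,2\}} \cong B^+_{\mathrm{dR},\{1\}}\widehat{\otimes}_{\mathbb{Q}_p} B^+_{\mathrm{dR},\{2\}}$, and analogously for $B_{\mathrm{dR}}$ and $B_e$, compatibly with the Galois actions of $\mathrm{Gal}_{\mathbb{Q}_p,1}\times\mathrm{Gal}_{\mathbb{Q}_p,2}$ factorwise. Thus a $B_{\{1,2\}}$-pair $(M_e,M_{\mathrm{dR}},M^+_{\mathrm{dR}})$ over the triple involving $\Pi_{[s_I,r_I],I,J,I\backslash J,A}$ (with $I=\emptyset$) can be regarded as a $B_{\{2\}}$-pair in the sense of \cite{Ber1} whose coefficients are themselves $B_{\{1\}}$-pairs, with commuting actions from the two Galois factors.

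I would then invoke Berger's theorem \cite[Th\'eor\`eme A]{Ber1} (or rather its relative version in \cite{KP}) along the second coordinate, applied with base coefficient ring given by the appropriate $B_{\{1\}}$-structured ring
\begin{align*}
(B^+_{\mathrm{dR},\{1\}}\widehat{\otimes}\Pi_{[s_I,r_I],I,J,I\backslash J,A},\; B^+_{\mathrm{dR},\{1\}}\widehat{\otimes}\Pi_{[s_I,r_I],I,J,I\backslash J,A}[t_1^{-1}],\; B_{e,\{1\}}\widehat{\otimes}\Pi_{[s_I,r_I],I,J,I\backslash J,A}).
\end{align*}
Berger's equivalence converts the de Rham data along the second variable into a $(\varphi_2,\Gamma_2)$-module structure over a Robba ring in the $T_2$-direction; since the construction is performed using only operations intrinsic to the second factor (taking $B_{\mathrm{dR},\{2\}}^{H_{K_2}}$, solving $\varphi_2$- and $\Gamma_2$-invariant equations), it commutes with the $B_{\{1\}}$-pair structure on the coefficients. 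The resulting object is precisely a $B_{\{1\}}$-$(\varphi_{\{2\}},\Gamma_{\{2\}})$-module, and the construction is reversible by Berger's inverse functor $M\mapsto (M[1/t_2]^{\Gamma_2},\dots)$ applied to the second coordinate.

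The functoriality of both directions, as well as the fact that they are quasi-inverse, follows formally once one checks the parameterized form of Berger's equivalence is valid with coefficients in the $B_{\{1\}}$-structured base. The main obstacle is precisely this parameterization: one must verify that the key ingredients of Berger's proof, namely the flatness of $B_{\mathrm{dR},\{2\}}$ over $B^+_{\mathrm{dR},\{2\}}$, the surjectivity of $\varphi_2-1$ on certain $B_e$-type rings, and the finite generation of $\Gamma_2$-fixed vectors, all continue to hold after completed tensor with the $B_{\{1\}}$-structured coefficient ring. This is a compatibility check rather than a new theorem; it follows from the relative Berger--Kedlaya--Pottharst framework \cite{KP} applied in the two-factor setting, together with the flatness properties of the de Rham period rings and the admissibility of $\Gamma_2$ as a $p$-adic Lie group acting continuously on Banach modules over the base. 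Once this parameterized equivalence is established, the proposition follows immediately.
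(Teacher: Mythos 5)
Your strategy coincides with the paper's at the structural level: both arguments treat the first Galois factor as a coefficient object and apply the relative Berger equivalence of Kedlaya--Pottharst along the second factor. The difference lies in how the crux --- what you call the ``parameterization'' obstacle --- is actually discharged. The paper does not attempt a direct parameterized form of Berger's Th\'eor\`eme A; instead it (1) first passes to the perfected rings carrying the $\widetilde{.}$ accent, where the comparison between $B$-pair data and $(\varphi,\Gamma)$-module data is available in the form of \cite[Theorem 2.18]{KP}, (2) handles the $B^+_{\mathrm{dR},\{1\}}$-coefficients by a d\'evissage modulo powers $t^k$ of the uniformizer $t_1$, so that the exotic coefficient ring is replaced by a successive extension of objects to which the relative theory applies (this is the role of the phrase ``mod $t^k$ coefficients'' and of \cite[Proposition 3.8]{T3}), and (3) descends from the perfected rings back to the unaccented rings via the proof of \cite[Theorem 4.4]{KP}. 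Your proposal correctly identifies that the flatness of $B_{\mathrm{dR}}$ over $B^+_{\mathrm{dR}}$, the surjectivity of $\varphi_2-1$, and finite generation of invariants are the ingredients that must survive the completed tensor, but asserting that this ``follows from the relative framework'' is precisely the step the paper replaces with the mod-$t^k$ filtration argument; without some such d\'evissage (or an explicit perfectoid descent as in steps (1) and (3)), the cited theorems of \cite{Ber1} and \cite{KP} do not literally apply to coefficients that are themselves triples of period rings with Galois action. So the route is the same, but you should supply the $t$-adic filtration reduction and the perfect-to-imperfect descent where you currently wave at a compatibility check.
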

\begin{proof}
This will be the corresponding consequence of the following. Let $I'=\{1,2\}$ be a set consisting of two elements and $I$ is empty, then we have that the category of all the $B_{I'}$-$(\varphi_I,\Gamma_I)$ modules over 
\begin{center}
$(B^+_{\mathrm{dR},I'}	\widehat{\otimes}\Pi_{[s_I,r_I],I,\widetilde{J},\widetilde{I\backslash J},A},B^+_{\mathrm{dR},I'}	\widehat{\otimes}\Pi_{[s_I,r_I],I,\widetilde{J},\widetilde{I\backslash J},A}[t_1^{-1},...,t_I'^{-1}],B_{e,I'}	\widehat{\otimes}\Pi_{[s_I,r_I],I,\widetilde{J},\widetilde{I\backslash J},A})$	
\end{center}
is equivalent to the category of all the $B_{\{1\}}$-$(\varphi_{\{2\}},\Gamma_{\{2\}})$ modules over the corresponding perfected rings with $\widetilde{.}$ accent. However this could be proved as in \cite[Theorem 2.18]{KP} as long as one works with mod $t^k,k\in \mathbb{Z}$ coefficients (also see the corresponding proof of \cite[Proposition 3.8]{T3}). To be more precise first we consider the corresponding base change of any $B_{I'}$-$(\varphi_I,\Gamma_I)$ module over 
\begin{center}
$(B^+_{\mathrm{dR},I'}	\widehat{\otimes}\Pi_{[s_I,r_I],I,\widetilde{J},\widetilde{I\backslash J},A},B^+_{\mathrm{dR},I'}	\widehat{\otimes}\Pi_{[s_I,r_I],I,\widetilde{J},\widetilde{I\backslash J},A}[t_1^{-1},...,t_I'^{-1}],B_{e,I'}	\widehat{\otimes}\Pi_{[s_I,r_I],I,\widetilde{J},\widetilde{I\backslash J},A})$	
\end{center}
to $B_\mathrm{dR,\{1\}}$, which then by the strategy above could be associated a $B_{\{1\}}$-$(\varphi_{\{2\}},\Gamma_{\{2\}})$ module over the corresponding perfected rings with $\widetilde{.}$ accent (see \cite[Theorem 2.18]{KP}, \cite[Proposition 3.8]{T3}). As in \cite[Theorem 2.18]{KP} we will have the situation where the category of all the $B_{I'}$-$(\varphi_I,\Gamma_I)$ modules over 
\begin{center}
$(B^+_{\mathrm{dR},I'}	\widehat{\otimes}\Pi_{[s_I,r_I],I,\widetilde{J},\widetilde{I\backslash J},A},B^+_{\mathrm{dR},I'}	\widehat{\otimes}\Pi_{[s_I,r_I],I,\widetilde{J},\widetilde{I\backslash J},A}[t_1^{-1},...,t_I'^{-1}],B_{e,I'}	\widehat{\otimes}\Pi_{[s_I,r_I],I,\widetilde{J},\widetilde{I\backslash J},A})$	
\end{center}
is equivalent to the category of all the $B_{\{1\}}$-$(\varphi_{\{2\}},\Gamma_{\{2\}})$ modules over the corresponding perfected rings with $\widetilde{.}$ accent. However by the proof of \cite[Theorem 4.4]{KP} we further have the situation where the category of all the $B_{I'}$-$(\varphi_I,\Gamma_I)$ modules over 
\begin{center}
$(B^+_{\mathrm{dR},I'}	\widehat{\otimes}\Pi_{[s_I,r_I],I,\widetilde{J},\widetilde{I\backslash J},A},B^+_{\mathrm{dR},I'}	\widehat{\otimes}\Pi_{[s_I,r_I],I,\widetilde{J},\widetilde{I\backslash J},A}[t_1^{-1},...,t_I'^{-1}],B_{e,I'}	\widehat{\otimes}\Pi_{[s_I,r_I],I,\widetilde{J},\widetilde{I\backslash J},A})$	
\end{center}
is equivalent to the category of all the $B_{\{1\}}$-$(\varphi_{\{2\}},\Gamma_{\{2\}})$ modules over the corresponding perfected rings with no accent. 
\end{proof}

\begin{proposition}
Let $I=\{1,2\}$ be a set consisting of two elements and $I'$ is empty, then we have that the category of all the $B_{I'}$-$(\varphi_I,\Gamma_I)$ modules over 
\begin{center}
$(B^+_{\mathrm{dR},I'}	\widehat{\otimes}\Pi_{[s_I,r_I],I,J,I\backslash J,A},B^+_{\mathrm{dR},I'}	\widehat{\otimes}\Pi_{[s_I,r_I],I,J,I\backslash J,A}[t_1^{-1},...,t_I'^{-1}],B_{e,I'}	\widehat{\otimes}\Pi_{[s_I,r_I],I,J,I\backslash J,A})$	
\end{center}
is equivalent to the category of all the $B_{\{1\}}$-$(\varphi_{\{2\}},\Gamma_{\{2\}})$ modules.

\end{proposition}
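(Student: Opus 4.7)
The plan is to mirror the strategy used to prove the previous proposition, but with the roles of the $B$-side and the $(\varphi,\Gamma)$-side swapped. In the previous proposition one converted a $B$-direction into a $(\varphi,\Gamma)$-direction via the inverse of Berger's equivalence; here we run Berger's forward direction in one of the two Frobenius coordinates, while carrying the operators in the other coordinate along by functoriality. So the target category, whose objects are $B_{\{1\}}$-$(\varphi_{\{2\}},\Gamma_{\{2\}})$-modules, should be produced by applying a relative form of Berger's Theorem A \cite{Ber1} in the first slot and keeping $\varphi_2,\Gamma_2$ as extra coefficient data in the second slot.

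First, I would fix a finite-projective $(\varphi_{\{1,2\}},\Gamma_{\{1,2\}})$-module $M$ over $\Pi_{[s_I,r_I],I,J,I\backslash J,A}$ and forget $\varphi_2$ and $\Gamma_2$, treating $M$ as a $(\varphi_1,\Gamma_1)$-module over an enlarged coefficient ring where the second coordinate has been absorbed. Passing to the partially perfected rings carrying the $\widetilde{.}$ or $\breve{.}$ accent on the second slot, \cite[Theorem 2.18]{KP} (and, equivalently, \cite[Proposition 3.8]{T3}) provides a functor from such relative $(\varphi_1,\Gamma_1)$-modules to $B_{\{1\}}$-pairs, producing a triple $(M_{e,1},M_{\mathrm{dR},1},M^+_{\mathrm{dR},1})$ glued as required and equipped with a semilinear action of $\mathrm{Gal}_{\mathbb{Q}_p,1}$.

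Next, I would transfer the remaining operators $\varphi_2$ and $\Gamma_2$ from $M$ to each member of the triple. Because Berger's construction is natural in the base ring, and because the partial operators of the two coordinates commute on $M$, they lift uniquely and commutingly to each of $M_{e,1}$, $M_{\mathrm{dR},1}$, $M^+_{\mathrm{dR},1}$, compatibly with the three gluing morphisms and with the $\mathrm{Gal}_{\mathbb{Q}_p,1}$-action. This yields a $B_{\{1\}}$-$(\varphi_{\{2\}},\Gamma_{\{2\}})$-module in the perfected setting. I would then descend along the second coordinate using the analogue of \cite[Theorem 4.4]{KP} together with the vertical comparison theorems already established earlier in the section, landing in the category of $B_{\{1\}}$-$(\varphi_{\{2\}},\Gamma_{\{2\}})$-modules over the unaccented ring $\Pi_{[s_I,r_I],I,J,I\backslash J,A}$. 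A quasi-inverse is obtained by forgetting $\varphi_2,\Gamma_2$, applying the inverse of Berger's equivalence in the first slot to recover a $(\varphi_1,\Gamma_1)$-module, and then propagating $\varphi_2,\Gamma_2$ back by the same naturality.

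The main obstacle is the commutation of the two equivalences used: Berger's forward direction in variable $1$ and the perfection/deperfection step in variable $2$. Concretely one has to check that performing Berger's equivalence relatively over the perfected base and then deperfecting gives the same result as a direct relative Berger equivalence over the imperfect base. As in the proof of the previous proposition, this is handled by first reducing modulo $t^{k}$ for each $k$, where the argument of \cite[Theorem 2.18]{KP} applies verbatim in the presence of the extra commuting $\varphi_2,\Gamma_2$ operators, and then assembling the $t$-adic layers back together; the descent in the second coordinate is then automatic from the vertical comparison results proved earlier.
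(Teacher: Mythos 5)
Your proposal follows essentially the same route as the paper: reduce to the perfected ($\widetilde{.}$-accented) rings, apply the relative Berger-type equivalence of \cite[Theorem 2.18]{KP} in the first coordinate via reduction modulo $t^k$ while carrying the commuting operators $\varphi_2,\Gamma_2$ along by functoriality, and descend in the second coordinate via \cite[Theorem 4.4]{KP} and the vertical comparison theorems. The paper's own proof is a much terser version of exactly this argument, deferring all details to the proof of the preceding proposition.
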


\begin{proof}
This will be the corresponding consequence of the following. Let $I=\{1,2\}$ be a set consisting of two elements and $I'$ is empty, then we have that the category of all the $B_{I'}$-$(\varphi_I,\Gamma_I)$ modules over 
\begin{center}
$(B^+_{\mathrm{dR},I'}	\widehat{\otimes}\Pi_{[s_I,r_I],I,\widetilde{J},\widetilde{I\backslash J},A},B^+_{\mathrm{dR},I'}	\widehat{\otimes}\Pi_{[s_I,r_I],I,\widetilde{J},\widetilde{I\backslash J},A}[t_1^{-1},...,t_I'^{-1}],B_{e,I'}	\widehat{\otimes}\Pi_{[s_I,r_I],I,\widetilde{J},\widetilde{I\backslash J},A})$	
\end{center}
is equivalent to the category of all the $B_{\{1\}}$-$(\varphi_{\{2\}},\Gamma_{\{2\}})$ modules over the corresponding perfected rings with $\widetilde{.}$ accent. However this is proved in \cite[Theorem 2.18]{KP} as in the proof of the previous proposition.

\end{proof}

%
%
%
%
%

\indent Now we combine the construction in the following coherent way following \cite[Section 2]{Ber1}, \cite[Definition 2.2]{Nak1} and \cite[Definition 2.2.6]{KPX}.

\begin{definition}
We define a $B_{I'}$-$(\varphi_I,\Gamma_I)$-bundle over
\begin{center}
$(B^+_{\mathrm{dR},I'}	\widehat{\otimes}\Pi_{\mathrm{an},r_{I},I,J,I\backslash J,A},B^+_{\mathrm{dR},I'}	\widehat{\otimes}\Pi_{[s_I,r_I],I,*,*,A}[t_1^{-1},...,t_I'^{-1}],B_{e,I'}	\widehat{\otimes}\Pi_{\mathrm{an},r_{I},I,*,*,A})$	
\end{center}
to be a compatible family (with respect to the Robba rings) of triplets of finite projective modules:
\begin{align}
(M_e,M_{\mathrm{dR}},M^+_{\mathrm{dR}})	
\end{align}
over 
\begin{center}
$(B^+_{\mathrm{dR},I'}	\widehat{\otimes}\Pi_{\mathrm{an},r_{I},I,*,*,A},B^+_{\mathrm{dR},I'}	\widehat{\otimes}\Pi_{\mathrm{an},r_{I},I,*,*,A}[t_1^{-1},...,t_I'^{-1}],B_{e,I'}	\widehat{\otimes}\Pi_{\mathrm{an},r_{I},I,*,*,A})$	
\end{center}
such that this is glueing datum along :
\begin{align}
B^+_{\mathrm{dR},I'}	\widehat{\otimes}\Pi_{\mathrm{an},r_{I},I,*,*,A}\rightarrow B^+_{\mathrm{dR},I'}	\widehat{\otimes}\Pi_{\mathrm{an},r_{I},I,*,*,A}[t_1^{-1},...,t_I'^{-1}] \leftarrow B_{e,I'}	\widehat{\otimes}\Pi_{\mathrm{an},r_{I},I,*,*,A}.	
\end{align}
And this carry the corresponding relative Galois action of:
\begin{align}
\mathrm{Gal}_{\mathbb{Q}_p,1}\times...\times \mathrm{Gal}_{\mathbb{Q}_p,I'}		
\end{align}
on the multi de Rham period rings which is semilinear. And we have that the three modules involved are relative $(\varphi_I,\Gamma_I)$-bundles relative to
\begin{align}
B^+_{\mathrm{dR},I'},B_{\mathrm{dR},I'},B_{e,I'}.	
\end{align}

\end{definition}

\begin{definition}
We define a pseudocoherent $B_{I'}$-$(\varphi_I,\Gamma_I)$-bundle over
\begin{center}
$(B^+_{\mathrm{dR},I'}	\widehat{\otimes}\Pi_{\mathrm{an},r_{I},I,J,I\backslash J,A},B^+_{\mathrm{dR},I'}	\widehat{\otimes}\Pi_{[s_I,r_I],I,*,*,A}[t_1^{-1},...,t_I'^{-1}],B_{e,I'}	\widehat{\otimes}\Pi_{\mathrm{an},r_{I},I,*,*,A})$	
\end{center}
to be a compatible family of triplets of stably pseudocoherent modules:
\begin{align}
(M_e,M_{\mathrm{dR}},M^+_{\mathrm{dR}})	
\end{align}
over 
\begin{center}
$(B^+_{\mathrm{dR},I'}	\widehat{\otimes}\Pi_{\mathrm{an},r_{I},I,*,*,A},B^+_{\mathrm{dR},I'}	\widehat{\otimes}\Pi_{\mathrm{an},r_{I},I,*,*,A}[t_1^{-1},...,t_I'^{-1}],B_{e,I'}	\widehat{\otimes}\Pi_{\mathrm{an},r_{I},I,*,*,A})$	
\end{center}
such that this is glueing datum along :
\begin{align}
B^+_{\mathrm{dR},I'}	\widehat{\otimes}\Pi_{\mathrm{an},r_{I},I,*,*,A}\rightarrow B^+_{\mathrm{dR},I'}	\widehat{\otimes}\Pi_{\mathrm{an},r_{I},I,*,*,A}[t_1^{-1},...,t_I'^{-1}] \leftarrow B_{e,I'}	\widehat{\otimes}\Pi_{\mathrm{an},r_{I},I,*,*,A}.	
\end{align}
And this carry the corresponding relative Galois action of:
\begin{align}
\mathrm{Gal}_{\mathbb{Q}_p,1}\times...\times \mathrm{Gal}_{\mathbb{Q}_p,I'}		
\end{align}
on the multi de Rham period rings which is semilinear. And we have that the three modules involved are relative pseudocoherent $(\varphi_I,\Gamma_I)$-bundles relative to
\begin{align}
B^+_{\mathrm{dR},I'},B_{\mathrm{dR},I'},B_{e,I'}.	
\end{align}

\end{definition}

\begin{proposition} \mbox{\bf{(After KPX \cite[Proposition 2.2.7]{KPX})}}
The category of all the finite projective $B_{I'}$-$(\varphi_I,\Gamma_I)$-bundle over 
\begin{center}
$(B^+_{\mathrm{dR},I'}	\widehat{\otimes}\Pi_{\mathrm{an},r_{I},I,*,*,A},B^+_{\mathrm{dR},I'}	\widehat{\otimes}\Pi_{\mathrm{an},r_{I},I,*,*,A}[t_1^{-1},...,t_I'^{-1}],B_{e,I'}	\widehat{\otimes}\Pi_{\mathrm{an},r_{I},I,*,*,A})$	
\end{center}
is equivalent to the category of all the finite projective $B_{I'}$-$(\varphi_I,\Gamma_I)$-modules over 
\begin{center}
$(B^+_{\mathrm{dR},I'}	\widehat{\otimes}\Pi_{\mathrm{an},r_{I},I,*,*,A},B^+_{\mathrm{dR},I'}	\widehat{\otimes}\Pi_{\mathrm{an},r_{I},I,*,*,A}[t_1^{-1},...,t_I'^{-1}],B_{e,I'}	\widehat{\otimes}\Pi_{\mathrm{an},r_{I},I,*,*,A})$.	
\end{center}
\end{proposition}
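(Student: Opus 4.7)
The plan is to reduce the statement, component by component in the triplet, to the already established equivalence between finite projective $\varphi_I$-modules and finite projective $\varphi_I$-sheaves (the theorem in Section~4.2 after KPX, which in turn rests on Proposition~\ref{proposition2.19}), and then show that the glueing data and the $(\Gamma_I\times \mathrm{Gal})$-actions descend together with the modules. First I would define the obvious base-change functor from finite projective $B_{I'}$-$(\varphi_I,\Gamma_I)$-modules over the triple of $\Pi_{\mathrm{an},r_I,I,*,*,A}$-type rings to compatible families (bundles) over the family of $\Pi_{[s_I,r_I],I,*,*,A}$-type rings; full faithfulness is immediate from the fact that each of the three big rings $B^+_{\mathrm{dR},I'}\widehat{\otimes}\Pi_{\mathrm{an},r_I,I,*,*,A}$, $B^+_{\mathrm{dR},I'}\widehat{\otimes}\Pi_{\mathrm{an},r_I,I,*,*,A}[t_1^{-1},\dots,t_{I'}^{-1}]$, $B_{e,I'}\widehat{\otimes}\Pi_{\mathrm{an},r_I,I,*,*,A}$ arises as a Fr\'echet-type inverse limit with dense transition maps, so morphisms of bundles glue uniquely.

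For essential surjectivity, given a finite projective $B_{I'}$-$(\varphi_I,\Gamma_I)$-bundle, I would treat it as three compatible families of finite projective modules $(M_{e,[s_I,r_I]},M_{\mathrm{dR},[s_I,r_I]},M^+_{\mathrm{dR},[s_I,r_I]})$ equipped with the glueing datum. Applying the Frobenius-cover argument of Section~4.2 to each component separately, using the $2^{|I|}$-uniform covering of $(0,r_{I,0}]$ by the multi-intervals $[r_{1,0}/p^{k_1},r_{1,0}/p^{k_1-1}]\times\cdots\times[r_{I,0}/p^{k_I},r_{I,0}/p^{k_I-1}]$ and spreading the rank bound via the partial Frobenius operators $\varphi_\alpha$, one sees that the ranks are uniformly bounded and each component is finite projective on each multi-interval. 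By Proposition~\ref{proposition2.19} (in its $B^+_{\mathrm{dR},I'}\widehat{\otimes}(-)$-tensored form, which is valid since tensoring with $B^+_{\mathrm{dR},I'}$ or inverting $t_1,\dots,t_{I'}$ is exact and preserves finite projectivity), the corresponding global sections $M_e$, $M_{\mathrm{dR}}$, $M^+_{\mathrm{dR}}$ over the Fr\'echet rings are finite projective.

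Next I would check that the glueing datum $M^+_{\mathrm{dR}}\otimes B_{\mathrm{dR}}\to M_{\mathrm{dR}}\leftarrow M_e\otimes B_{\mathrm{dR}}$ and the semilinear $\mathrm{Gal}_{\mathbb{Q}_p,1}\times\cdots\times\mathrm{Gal}_{\mathbb{Q}_p,I'}$-action pass to the inverse limits: both are determined by the compatible data on every closed multi-interval and therefore extend by the uniqueness of limits; the commutation with the $(\varphi_I,\Gamma_I)$-action likewise follows interval by interval. The resulting object is then a finite projective $B_{I'}$-$(\varphi_I,\Gamma_I)$-module whose base change recovers the given bundle up to canonical isomorphism.

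The main obstacle I expect is not the argument for the $\varphi_I$-module pieces individually, which is the direct analogue of the theorem proved just above in Section~4.2, but rather the bookkeeping of the glueing: one has to verify that finite projectivity propagates coherently through the three rings in the triple simultaneously, i.e.\ that the isomorphisms $M^+_{\mathrm{dR},[s_I,r_I]}[t_1^{-1},\dots,t_{I'}^{-1}]\simeq M_{\mathrm{dR},[s_I,r_I]}\simeq M_{e,[s_I,r_I]}\otimes B_{\mathrm{dR},I'}$ assemble into global isomorphisms over the Fr\'echet rings. This is handled by repeating the uniform-covering argument for each ring in the triple and invoking the flatness of $B_{e,I'}\to B_{\mathrm{dR},I'}$ and $B^+_{\mathrm{dR},I'}\to B_{\mathrm{dR},I'}$, which ensures that the compatibility conditions can be read off at the level of each $[s_I,r_I]$ and then glued.
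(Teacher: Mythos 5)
Your proposal is correct and follows essentially the same route as the paper's proof: base change gives the fully faithful functor, essential surjectivity is obtained from the $2^{|I|}$-uniform covering by the multi-intervals $[r_{1,0}/p^{k_1},r_{1,0}/p^{k_1-1}]\times\cdots\times[r_{I,0}/p^{k_I},r_{I,0}/p^{k_I-1}]$ together with the partial Frobenius actions, and one concludes by Proposition~\ref{proposition2.19} applied componentwise, with the Galois actions and glueing data carried along. Your extra care about the descent of the glueing isomorphisms and the semilinear actions only makes explicit what the paper disposes of in one sentence.
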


\begin{proof}
Without considering the corresponding Galois actions for the $B_{I'}$-pair components we could prove this as in the relative situation carrying just $A$-coefficient. To be more precise, the base change gives rise to the corresponding fully faithful functor from the first category to the second one, while to show the corresponding essential surjectivity, consider the corresponding multi-interval $[r_{1,0}/p,r_{1,0}]\times...\times [r_{I,0}/p,r_{I,0}]$ and use the corresponding Frobenius to reach all the corresponding intervals taking the general form of:
\begin{align}
[r_{1,0}/p^{k_1},r_{1,0}/p^{k_1-1}]\times...\times [r_{I,0}/p^{k_I},r_{I,0}/p^{k_I-1}],k_\alpha=1,2,...,\forall\alpha\in I.	
\end{align}
This forms a $2^{|I|}$-uniform covering of the whole space. And the corresponding uniform finiteness of the modules over each 
\begin{align}
[r_{1,0}/p^{k_1},r_{1,0}/p^{k_1-1}]\times...\times [r_{I,0}/p^{k_I},r_{I,0}/p^{k_I-1}],k_\alpha=1,2,...,\forall\alpha\in I.	
\end{align}	
could be achieved by using the corresponding partial Frobenius actions. Then we are done by applying \cref{proposition2.19}. 	
\end{proof}

\begin{proposition} \mbox{\bf{(After KPX \cite[Proposition 2.2.7]{KPX})}}
The category of all the pseudocoherent $B_{I'}$-$(\varphi_I,\Gamma_I)$-bundle over 
\begin{center}
$(B^+_{\mathrm{dR},I'}	\widehat{\otimes}\Pi_{\mathrm{an},r_{I},I,*,*,A},B^+_{\mathrm{dR},I'}	\widehat{\otimes}\Pi_{\mathrm{an},r_{I},I,*,*,A}[t_1^{-1},...,t_I'^{-1}],B_{e,I'}	\widehat{\otimes}\Pi_{\mathrm{an},r_{I},I,*,*,A})$	
\end{center}
is equivalent to the category of all the pseudocoherent $B_{I'}$-$(\varphi_I,\Gamma_I)$-modules over 
\begin{center}
$(B^+_{\mathrm{dR},I'}	\widehat{\otimes}\Pi_{\mathrm{an},r_{I},I,*,*,A},B^+_{\mathrm{dR},I'}	\widehat{\otimes}\Pi_{\mathrm{an},r_{I},I,*,*,A}[t_1^{-1},...,t_I'^{-1}],B_{e,I'}	\widehat{\otimes}\Pi_{\mathrm{an},r_{I},I,*,*,A})$.	
\end{center}
\end{proposition}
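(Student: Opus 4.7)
The plan is to mirror the strategy of the preceding proposition on finite projective bundles, with the Galois and $B_{I'}$-structure treated as a decoration that is transported for free along the underlying module-theoretic equivalence. First I would observe that the base change functor from pseudocoherent $B_{I'}$-$(\varphi_I,\Gamma_I)$-modules to pseudocoherent $B_{I'}$-$(\varphi_I,\Gamma_I)$-bundles is fully faithful: the glueing datum on the module side is by definition coherent across all multi-intervals $[s_I, r_I] \subset (0, r_{I,0}]$, and pullback along the appropriate restriction morphisms recovers the bundle, while conversely the global section functor inverts this on morphisms. The $B_{I'}$-structure and the semilinear action of $\mathrm{Gal}_{\mathbb{Q}_p,1} \times \cdots \times \mathrm{Gal}_{\mathbb{Q}_p,I'}$ extend along this equivalence tautologically.

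For essential surjectivity, given a pseudocoherent $B_{I'}$-$(\varphi_I,\Gamma_I)$-bundle, forget the Galois and $B_{I'}$-data and consider just the underlying triple of pseudocoherent sheaves over the three families of period rings. I would apply exactly the covering argument used in the proof of the previous proposition: start with the base multi-interval $[r_{1,0}/p, r_{1,0}] \times \cdots \times [r_{I,0}/p, r_{I,0}]$, and use the partial Frobenii $\varphi_\alpha$ to translate to every multi-interval of the form
\begin{align*}
[r_{1,0}/p^{k_1}, r_{1,0}/p^{k_1-1}] \times \cdots \times [r_{I,0}/p^{k_I}, r_{I,0}/p^{k_I-1}], \quad k_\alpha \in \mathbb{Z}_{\geq 1}.
\end{align*}
This gives a $2^{|I|}$-uniform covering of $(0, r_{I,0}]^I$, and the partial Frobenius isomorphisms built into the bundle provide the required uniform bound on ranks across this covering. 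Then \cref{proposition2.18} promotes each of the three component bundles to a pseudocoherent module over the corresponding Fréchet period ring.

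The main obstacle, which is the only real departure from the finite projective case, is promoting finite generation to pseudocoherence uniformly across the three components (de Rham positive, de Rham with inverted uniformizers, and $B_{e,I'}$-component) and compatibly with the glueing morphisms along $B^+_{\mathrm{dR},I'} \widehat{\otimes} \Pi_{\mathrm{an}, r_I, I, *, *, A} \to B^+_{\mathrm{dR},I'} \widehat{\otimes} \Pi_{\mathrm{an}, r_I, I, *, *, A}[t_1^{-1}, \ldots, t_{I'}^{-1}] \leftarrow B_{e, I'} \widehat{\otimes} \Pi_{\mathrm{an}, r_I, I, *, *, A}$. I would handle this in the same spirit as the pseudocoherent $\varphi_I$-module theorem earlier in the paper, by choosing a finite free covering and invoking \cite[Theorem 4.6.1, Lemma 5.4.11]{KL2} to upgrade finite generation to pseudocoherence, and then checking that the three promotions glue because the glueing morphisms are flat in the appropriate sense (inversion of the $t_i$ and the short exact sequence relating $B_e$, $B^+_{\mathrm{dR}}$ and $B_{\mathrm{dR}}$). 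Finally, reinstating the semilinear Galois and $(\varphi_I, \Gamma_I)$-action on the resulting triple is automatic because these actions are already present at the level of each $[s_I, r_I]$-slice and pass to the limit by continuity, giving the desired pseudocoherent $B_{I'}$-$(\varphi_I, \Gamma_I)$-module as a preimage.
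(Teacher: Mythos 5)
Your proposal is correct and follows essentially the same route as the paper: fully faithfulness via base change, essential surjectivity via the $2^{|I|}$-uniform covering generated by the partial Frobenii from the base multi-interval, and then an application of \cref{proposition2.18}. The extra care you take in promoting finite generation to pseudocoherence via a finite free covering and in checking compatibility with the glueing maps of the $B_{I'}$-triple is detail the paper relegates to its earlier analogous theorem on pseudocoherent $\varphi_I$-sheaves, but it matches the intended argument.
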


\begin{proof}
Without considering the corresponding Galois actions for the $B_{I'}$-pair components we could prove this as in the relative situation carrying just $A$-coefficient. To be more precise, the base change gives rise to the corresponding fully faithful functor from the first category to the second one, while to show the corresponding essential surjectivity, consider the corresponding multi-interval $[r_{1,0}/p,r_{1,0}]\times...\times [r_{I,0}/p,r_{I,0}]$ and use the corresponding Frobenius to reach all the corresponding intervals taking the general form of:
\begin{align}
[r_{1,0}/p^{k_1},r_{1,0}/p^{k_1-1}]\times...\times [r_{I,0}/p^{k_I},r_{I,0}/p^{k_I-1}],k_\alpha=1,2,...,\forall\alpha\in I.	
\end{align}
This forms a $2^{|I|}$-uniform covering of the whole space. And the corresponding uniform finiteness of the modules over each 
\begin{align}
[r_{1,0}/p^{k_1},r_{1,0}/p^{k_1-1}]\times...\times [r_{I,0}/p^{k_I},r_{I,0}/p^{k_I-1}],k_\alpha=1,2,...,\forall\alpha\in I.	
\end{align}	
could be achieved by using the corresponding partial Frobenius actions. Then we are done by applying \cref{proposition2.18}. 	
\end{proof}

\newpage

\section{Cohomologies of Cyclotomic Multivariate $(\varphi_I,\Gamma_I)$-Modules over Rigid Analytic Affinoids in Mixed-characteristic Case}

\noindent Now we define the corresponding cohomologies of the multivariate $(\varphi_I,\Gamma_I)$-modules over the following groups of rings:

\begin{align}
\Pi_{\mathrm{an},r_I,I,\breve{J},I\backslash J,A}(\pi_{K_I}):=\varprojlim_{s_I}\Pi_{[s_I,r_I],I,\breve{J},I\backslash J,A}(\pi_{K_I}),\\	
\Pi_{\mathrm{an},r_I,I,\widetilde{J},I\backslash J,A}(\pi_{K_I}):=\varprojlim_{s_I} \Pi_{[s_I,r_I],I,\widetilde{J},I\backslash J,A}(\pi_{K_I}),\\
\Pi_{\mathrm{an},r_I,I,J,\breve{I\backslash J},A}(\pi_{K_I}):=\varprojlim_{s_I}\Pi_{[s_I,r_I],I,J,\breve{I\backslash J},A}(\pi_{K_I}),\\	
\Pi_{\mathrm{an},r_I,I,\breve{J},\breve{I\backslash J},A}(\pi_{K_I}):=\varprojlim_{s_I} \Pi_{[s_I,r_I],I,\breve{J},\breve{I\backslash J},A}(\pi_{K_I}),\\	
\Pi_{\mathrm{an},r_I,I,\widetilde{J},\breve{I\backslash J},A}(\pi_{K_I}):=\varprojlim_{s_I} \Pi_{[s_I,r_I],I,\widetilde{J},\breve{I\backslash J},A}(\pi_{K_I}),\\
\Pi_{\mathrm{an},r_I,I,J,\widetilde{I\backslash J},A}(\pi_{K_I}):=\varprojlim_{s_I} \Pi_{[s_I,r_I],I,J,\widetilde{I\backslash J},A}(\pi_{K_I}),\\	
\Pi_{\mathrm{an},r_I,I,\breve{J},\widetilde{I\backslash J},A}(\pi_{K_I}):=\varprojlim_{s_I} \Pi_{[s_I,r_I],I,\breve{J},\widetilde{I\backslash J},A}(\pi_{K_I}),\\	
\Pi_{\mathrm{an},r_I,I,\widetilde{J},\widetilde{I\backslash J},A}(\pi_{K_I}):=\varprojlim_{s_I} \Pi_{[s_I,r_I],I,\widetilde{J},\widetilde{I\backslash J},A}(\pi_{K_I}).	
\end{align}

and

\begin{align}
\Pi_{[s_I,r_I],I,\breve{J},I\backslash J,A}(\Gamma_{K_I}),\\	
\Pi_{[s_I,r_I],I,\widetilde{J},I\backslash J,A}(\Gamma_{K_I}),\\
\Pi_{[s_I,r_I],I,J,\breve{I\backslash J},A}(\Gamma_{K_I}),\\	
\Pi_{[s_I,r_I],I,\breve{J},\breve{I\backslash J},A}(\Gamma_{K_I}),\\
\Pi_{[s_I,r_I],I,\widetilde{J},\breve{I\backslash J},A}(\Gamma_{K_I}),\\
\Pi_{[s_I,r_I],I,J,\widetilde{I\backslash J},A}(\Gamma_{K_I}),\\	
\Pi_{[s_I,r_I],I,\breve{J},\widetilde{I\backslash J},A}(\Gamma_{K_I}),\\	
\Pi_{[s_I,r_I],I,\widetilde{J},\widetilde{I\backslash J},A}(\Gamma_{K_I}).	
\end{align}

\begin{definition} \mbox{\bf{(After KPX, \cite[Definition 2.3.3]{KPX})}}
We define by induction the corresponding $\varphi_I$-complex $C^\bullet_{\varphi_I}$ of a corresponding $(\varphi_I,\Gamma_I)$-module $M$ over 
\begin{align}
\Pi_{\mathrm{an},r_I,I,\breve{J},I\backslash J,A}(\pi_{K_I}):=\varprojlim_{s_I}\Pi_{[s_I,r_I],I,\breve{J},I\backslash J,A}(\pi_{K_I}),\\	
\Pi_{\mathrm{an},r_I,I,\widetilde{J},I\backslash J,A}(\pi_{K_I}):=\varprojlim_{s_I} \Pi_{[s_I,r_I],I,\widetilde{J},I\backslash J,A}(\pi_{K_I}),\\
\Pi_{\mathrm{an},r_I,I,J,\breve{I\backslash J},A}(\pi_{K_I}):=\varprojlim_{s_I}\Pi_{[s_I,r_I],I,J,\breve{I\backslash J},A}(\pi_{K_I}),\\	
\Pi_{\mathrm{an},r_I,I,\breve{J},\breve{I\backslash J},A}(\pi_{K_I}):=\varprojlim_{s_I} \Pi_{[s_I,r_I],I,\breve{J},\breve{I\backslash J},A}(\pi_{K_I}),\\	
\Pi_{\mathrm{an},r_I,I,\widetilde{J},\breve{I\backslash J},A}(\pi_{K_I}):=\varprojlim_{s_I} \Pi_{[s_I,r_I],I,\widetilde{J},\breve{I\backslash J},A}(\pi_{K_I}),\\
\Pi_{\mathrm{an},r_I,I,J,\widetilde{I\backslash J},A}(\pi_{K_I}):=\varprojlim_{s_I} \Pi_{[s_I,r_I],I,J,\widetilde{I\backslash J},A}(\pi_{K_I}),\\	
\Pi_{\mathrm{an},r_I,I,\breve{J},\widetilde{I\backslash J},A}(\pi_{K_I}):=\varprojlim_{s_I} \Pi_{[s_I,r_I],I,\breve{J},\widetilde{I\backslash J},A}(\pi_{K_I}),\\	
\Pi_{\mathrm{an},r_I,I,\widetilde{J},\widetilde{I\backslash J},A}(\pi_{K_I}):=\varprojlim_{s_I} \Pi_{[s_I,r_I],I,\widetilde{J},\widetilde{I\backslash J},A}(\pi_{K_I})	
\end{align}
to be the corresponding totalization of the following complex:
\[
\xymatrix@C+0pc@R+0pc{
0 \ar[r] \ar[r] \ar[r] &C^\bullet_{\varphi_{I\backslash |I|}}(M_{...,r_{|I|}}) \ar[r]^{\varphi_{|I|}-1}\ar[r]\ar[r] &C^\bullet_{\varphi_{I\backslash |I|}}(M_{...,r_{|I|}/p}) \ar[r] \ar[r] \ar[r] &0
}
\]
as long as $C^\bullet_{\varphi_{I\backslash |I|}}(M)$ is constructed. We define by induction the corresponding $\varphi_I$-complex $C^\bullet_{\varphi_I}(M)$ of a corresponding $(\varphi_I,\Gamma_I)$-module $M$ over
\begin{align}
\Pi_{[s_I,r_I],I,\breve{J},I\backslash J,A}(\pi_{K_I}),\\	
\Pi_{[s_I,r_I],I,\widetilde{J},I\backslash J,A}(\pi_{K_I}),\\
\Pi_{[s_I,r_I],I,J,\breve{I\backslash J},A}(\pi_{K_I}),\\	
\Pi_{[s_I,r_I],I,\breve{J},\breve{I\backslash J},A}(\pi_{K_I}),\\
\Pi_{[s_I,r_I],I,\widetilde{J},\breve{I\backslash J},A}(\pi_{K_I}),\\
\Pi_{[s_I,r_I],I,J,\widetilde{I\backslash J},A}(\pi_{K_I}),\\	
\Pi_{[s_I,r_I],I,\breve{J},\widetilde{I\backslash J},A}(\pi_{K_I}),\\
\Pi_{[s_I,r_I],I,\widetilde{J},\widetilde{I\backslash J},A}(\pi_{K_I})	
\end{align}	
to be the corresponding totalization of the following complex:
\[
\xymatrix@C+0pc@R+0pc{
0 \ar[r] \ar[r] \ar[r] &C^\bullet_{\varphi_{I\backslash |I|}}(M_{...,[s_{|I|},r_{|I|}]}) \ar[r]^{\varphi_{|I|}-1}\ar[r]\ar[r] &C^\bullet_{\varphi_{I\backslash |I|}}(M_{...,[s_{|I|},r_{|I|}/p]}) \ar[r] \ar[r] \ar[r] &0
}
\]
as long as $C^\bullet_{\varphi_{I\backslash |I|}}(M)$ is constructed. Here we assume $0< s_\alpha \leq r_\alpha/p$ for each $\alpha\in I$.	
\end{definition}

\begin{definition} \mbox{\bf{(After KPX, \cite[Definition 2.3.3]{KPX})}}
We define by induction the corresponding $\psi_I$-complex $C^\bullet_{\psi_I}$ of a corresponding $(\varphi_I,\Gamma_I)$-module $M$ over 
\begin{align}
\Pi_{\mathrm{an},r_I,I,\breve{J},I\backslash J,A}(\pi_{K_I}):=\varprojlim_{s_I}\Pi_{[s_I,r_I],I,\breve{J},I\backslash J,A}(\pi_{K_I}),\\	
\Pi_{\mathrm{an},r_I,I,\widetilde{J},I\backslash J,A}(\pi_{K_I}):=\varprojlim_{s_I} \Pi_{[s_I,r_I],I,\widetilde{J},I\backslash J,A}(\pi_{K_I}),\\
\Pi_{\mathrm{an},r_I,I,J,\breve{I\backslash J},A}(\pi_{K_I}):=\varprojlim_{s_I}\Pi_{[s_I,r_I],I,J,\breve{I\backslash J},A}(\pi_{K_I}),\\	
\Pi_{\mathrm{an},r_I,I,\breve{J},\breve{I\backslash J},A}(\pi_{K_I}):=\varprojlim_{s_I} \Pi_{[s_I,r_I],I,\breve{J},\breve{I\backslash J},A}(\pi_{K_I}),\\	
\Pi_{\mathrm{an},r_I,I,\widetilde{J},\breve{I\backslash J},A}(\pi_{K_I}):=\varprojlim_{s_I} \Pi_{[s_I,r_I],I,\widetilde{J},\breve{I\backslash J},A}(\pi_{K_I}),\\
\Pi_{\mathrm{an},r_I,I,J,\widetilde{I\backslash J},A}(\pi_{K_I}):=\varprojlim_{s_I} \Pi_{[s_I,r_I],I,J,\widetilde{I\backslash J},A}(\pi_{K_I}),\\	
\Pi_{\mathrm{an},r_I,I,\breve{J},\widetilde{I\backslash J},A}(\pi_{K_I}):=\varprojlim_{s_I} \Pi_{[s_I,r_I],I,\breve{J},\widetilde{I\backslash J},A}(\pi_{K_I}),\\	
\Pi_{\mathrm{an},r_I,I,\widetilde{J},\widetilde{I\backslash J},A}(\pi_{K_I}):=\varprojlim_{s_I} \Pi_{[s_I,r_I],I,\widetilde{J},\widetilde{I\backslash J},A}(\pi_{K_I})	
\end{align}
to be the corresponding totalization of the following complex:
\[
\xymatrix@C+0pc@R+0pc{
0 \ar[r] \ar[r] \ar[r] &C^\bullet_{\psi_{I\backslash |I|}}(M_{...,r_{|I|}}) \ar[r]^{\psi_{|I|}-1}\ar[r]\ar[r] &C^\bullet_{\psi_{I\backslash |I|}}(M_{...,pr_{|I|}}) \ar[r] \ar[r] \ar[r] &0
}
\]
as long as $C^\bullet_{\psi_{I\backslash |I|}}(M)$ is constructed. We define by induction the corresponding $\psi_I$-complex $C^\bullet_{\psi_I}(M)$ of a corresponding $(\varphi_I,\Gamma_I)$-module $M$ over
\begin{align}
\Pi_{[s_I,r_I],I,\breve{J},I\backslash J,A}(\pi_{K_I}),\\	
\Pi_{[s_I,r_I],I,\widetilde{J},I\backslash J,A}(\pi_{K_I}),\\
\Pi_{[s_I,r_I],I,J,\breve{I\backslash J},A}(\pi_{K_I}),\\	
\Pi_{[s_I,r_I],I,\breve{J},\breve{I\backslash J},A}(\pi_{K_I}),\\
\Pi_{[s_I,r_I],I,\widetilde{J},\breve{I\backslash J},A}(\pi_{K_I}),\\
\Pi_{[s_I,r_I],I,J,\widetilde{I\backslash J},A}(\pi_{K_I}),\\	
\Pi_{[s_I,r_I],I,\breve{J},\widetilde{I\backslash J},A}(\pi_{K_I}),\\
\Pi_{[s_I,r_I],I,\widetilde{J},\widetilde{I\backslash J},A}(\pi_{K_I})	
\end{align}	
to be the corresponding totalization of the following complex:
\[
\xymatrix@C+0pc@R+0pc{
0 \ar[r] \ar[r] \ar[r] &C^\bullet_{\psi_{I\backslash |I|}}(M_{...,[s_{|I|},r_{|I|}]}) \ar[r]^{\psi_{|I|}-1}\ar[r]\ar[r] &C^\bullet_{\psi_{I\backslash |I|}}(M_{...,[ps_{|I|},r_{|I|}]}) \ar[r] \ar[r] \ar[r] &0
}
\]
as long as $C^\bullet_{\psi_{I\backslash |I|}}(M)$ is constructed. Here we assume $0< s_\alpha \leq r_\alpha/p$ for each $\alpha\in I$.	
\end{definition}

\begin{definition} \mbox{\bf{(After KPX, \cite[Definition 2.3.3]{KPX})}}
We define by induction the corresponding $\Gamma_I$-complex $C^\bullet_{\Gamma_I}$ of a corresponding $(\varphi_I,\Gamma_I)$-module $M$ over 
\begin{align}
\Pi_{\mathrm{an},r_I,I,\breve{J},I\backslash J,A}(\pi_{K_I}):=\varprojlim_{s_I}\Pi_{[s_I,r_I],I,\breve{J},I\backslash J,A}(\pi_{K_I}),\\	
\Pi_{\mathrm{an},r_I,I,\widetilde{J},I\backslash J,A}(\pi_{K_I}):=\varprojlim_{s_I} \Pi_{[s_I,r_I],I,\widetilde{J},I\backslash J,A}(\pi_{K_I}),\\
\Pi_{\mathrm{an},r_I,I,J,\breve{I\backslash J},A}(\pi_{K_I}):=\varprojlim_{s_I}\Pi_{[s_I,r_I],I,J,\breve{I\backslash J},A}(\pi_{K_I}),\\	
\Pi_{\mathrm{an},r_I,I,\breve{J},\breve{I\backslash J},A}(\pi_{K_I}):=\varprojlim_{s_I} \Pi_{[s_I,r_I],I,\breve{J},\breve{I\backslash J},A}(\pi_{K_I}),\\	
\Pi_{\mathrm{an},r_I,I,\widetilde{J},\breve{I\backslash J},A}(\pi_{K_I}):=\varprojlim_{s_I} \Pi_{[s_I,r_I],I,\widetilde{J},\breve{I\backslash J},A}(\pi_{K_I}),\\
\Pi_{\mathrm{an},r_I,I,J,\widetilde{I\backslash J},A}(\pi_{K_I}):=\varprojlim_{s_I} \Pi_{[s_I,r_I],I,J,\widetilde{I\backslash J},A}(\pi_{K_I}),\\	
\Pi_{\mathrm{an},r_I,I,\breve{J},\widetilde{I\backslash J},A}(\pi_{K_I}):=\varprojlim_{s_I} \Pi_{[s_I,r_I],I,\breve{J},\widetilde{I\backslash J},A}(\pi_{K_I}),\\	
\Pi_{\mathrm{an},r_I,I,\widetilde{J},\widetilde{I\backslash J},A}(\pi_{K_I}):=\varprojlim_{s_I} \Pi_{[s_I,r_I],I,\widetilde{J},\widetilde{I\backslash J},A}(\pi_{K_I})	
\end{align}
to be the corresponding totalization of the following complex:
\[
\xymatrix@C+0pc@R+0pc{
0 \ar[r] \ar[r] \ar[r] &C^\bullet_{\Gamma_{I\backslash |I|}}(M) \ar[r]^{\gamma_{|I|}-1}\ar[r]\ar[r] &C^\bullet_{\Gamma_{I\backslash |I|}}(M) \ar[r] \ar[r] \ar[r] &0
}
\]
as long as $C^\bullet_{\Gamma_{I\backslash |I|}}(M)$ is constructed. We define by induction the corresponding $\Gamma_I$-complex $C^\bullet_{\Gamma_I}(M)$ of a corresponding $(\varphi_I,\Gamma_I)$-module $M$ over
\begin{align}
\Pi_{[s_I,r_I],I,\breve{J},I\backslash J,A}(\pi_{K_I}),\\	
\Pi_{[s_I,r_I],I,\widetilde{J},I\backslash J,A}(\pi_{K_I}),\\
\Pi_{[s_I,r_I],I,J,\breve{I\backslash J},A}(\pi_{K_I}),\\	
\Pi_{[s_I,r_I],I,\breve{J},\breve{I\backslash J},A}(\pi_{K_I}),\\
\Pi_{[s_I,r_I],I,\widetilde{J},\breve{I\backslash J},A}(\pi_{K_I}),\\
\Pi_{[s_I,r_I],I,J,\widetilde{I\backslash J},A}(\pi_{K_I}),\\	
\Pi_{[s_I,r_I],I,\breve{J},\widetilde{I\backslash J},A}(\pi_{K_I}),\\
\Pi_{[s_I,r_I],I,\widetilde{J},\widetilde{I\backslash J},A}(\pi_{K_I})	
\end{align}	
to be the corresponding totalization of the following complex:
\[
\xymatrix@C+0pc@R+0pc{
0 \ar[r] \ar[r] \ar[r] &C^\bullet_{\Gamma_{I\backslash |I|}}(M) \ar[r]^{\gamma_{|I|}-1}\ar[r]\ar[r] &C^\bullet_{\Gamma_{I\backslash |I|}}(M) \ar[r] \ar[r] \ar[r] &0
}
\]
as long as $C^\bullet_{\Gamma_{I\backslash |I|}}(M)$ is constructed. Here we assume $0< s_\alpha \leq r_\alpha/p$ for each $\alpha\in I$.	
\end{definition}

\begin{definition} \mbox{\bf{(After KPX, \cite[Definition 2.3.3]{KPX})}}
For any $(\varphi_I,\Gamma_I)$-module $M$ over 
\begin{align}
\Pi_{\mathrm{an},r_I,I,\breve{J},I\backslash J,A}(\pi_{K_I}):=\varprojlim_{s_I}\Pi_{[s_I,r_I],I,\breve{J},I\backslash J,A}(\pi_{K_I}),\\	
\Pi_{\mathrm{an},r_I,I,\widetilde{J},I\backslash J,A}(\pi_{K_I}):=\varprojlim_{s_I} \Pi_{[s_I,r_I],I,\widetilde{J},I\backslash J,A}(\pi_{K_I}),\\
\Pi_{\mathrm{an},r_I,I,J,\breve{I\backslash J},A}(\pi_{K_I}):=\varprojlim_{s_I}\Pi_{[s_I,r_I],I,J,\breve{I\backslash J},A}(\pi_{K_I}),\\	
\Pi_{\mathrm{an},r_I,I,\breve{J},\breve{I\backslash J},A}(\pi_{K_I}):=\varprojlim_{s_I} \Pi_{[s_I,r_I],I,\breve{J},\breve{I\backslash J},A}(\pi_{K_I}),\\	
\Pi_{\mathrm{an},r_I,I,\widetilde{J},\breve{I\backslash J},A}(\pi_{K_I}):=\varprojlim_{s_I} \Pi_{[s_I,r_I],I,\widetilde{J},\breve{I\backslash J},A}(\pi_{K_I}),\\
\Pi_{\mathrm{an},r_I,I,J,\widetilde{I\backslash J},A}(\pi_{K_I}):=\varprojlim_{s_I} \Pi_{[s_I,r_I],I,J,\widetilde{I\backslash J},A}(\pi_{K_I}),\\	
\Pi_{\mathrm{an},r_I,I,\breve{J},\widetilde{I\backslash J},A}(\pi_{K_I}):=\varprojlim_{s_I} \Pi_{[s_I,r_I],I,\breve{J},\widetilde{I\backslash J},A}(\pi_{K_I}),\\	
\Pi_{\mathrm{an},r_I,I,\widetilde{J},\widetilde{I\backslash J},A}(\pi_{K_I}):=\varprojlim_{s_I} \Pi_{[s_I,r_I],I,\widetilde{J},\widetilde{I\backslash J},A}(\pi_{K_I})	
\end{align} 
we define the corresponding complex $C^\bullet_{\varphi_I,\Gamma_I}(M)$ to be the corresponding totalization of $C^\bullet_{\varphi_I}C^\bullet_{\Gamma_I}(M)$.
For any $(\varphi_I,\Gamma_I)$-module $M$ over 
\begin{align}
\Pi_{[s_I,r_I],I,\breve{J},I\backslash J,A}(\pi_{K_I}),\\	
\Pi_{[s_I,r_I],I,\widetilde{J},I\backslash J,A}(\pi_{K_I}),\\
\Pi_{[s_I,r_I],I,J,\breve{I\backslash J},A}(\pi_{K_I}),\\	
\Pi_{[s_I,r_I],I,\breve{J},\breve{I\backslash J},A}(\pi_{K_I}),\\
\Pi_{[s_I,r_I],I,\widetilde{J},\breve{I\backslash J},A}(\pi_{K_I}),\\
\Pi_{[s_I,r_I],I,J,\widetilde{I\backslash J},A}(\pi_{K_I}),\\	
\Pi_{[s_I,r_I],I,\breve{J},\widetilde{I\backslash J},A}(\pi_{K_I}),\\
\Pi_{[s_I,r_I],I,\widetilde{J},\widetilde{I\backslash J},A}(\pi_{K_I})	
\end{align}	
we define the corresponding complex $C^\bullet_{\varphi_I,\Gamma_I}(M)$ to be the corresponding totalization of $C^\bullet_{\varphi_I}C^\bullet_{\Gamma_I}(M)$. Here we assume $0< s_\alpha \leq r_\alpha/p$ for each $\alpha\in I$.	
	
\end{definition}


\begin{definition} \mbox{\bf{(After KPX, \cite[Definition 2.3.3]{KPX})}}
For any $(\psi_I,\Gamma_I)$-module $M$ over 
\begin{align}
\Pi_{\mathrm{an},r_I,I,\breve{J},I\backslash J,A}(\pi_{K_I}):=\varprojlim_{s_I}\Pi_{[s_I,r_I],I,\breve{J},I\backslash J,A}(\pi_{K_I}),\\	
\Pi_{\mathrm{an},r_I,I,\widetilde{J},I\backslash J,A}(\pi_{K_I}):=\varprojlim_{s_I} \Pi_{[s_I,r_I],I,\widetilde{J},I\backslash J,A}(\pi_{K_I}),\\
\Pi_{\mathrm{an},r_I,I,J,\breve{I\backslash J},A}(\pi_{K_I}):=\varprojlim_{s_I}\Pi_{[s_I,r_I],I,J,\breve{I\backslash J},A}(\pi_{K_I}),\\	
\Pi_{\mathrm{an},r_I,I,\breve{J},\breve{I\backslash J},A}(\pi_{K_I}):=\varprojlim_{s_I} \Pi_{[s_I,r_I],I,\breve{J},\breve{I\backslash J},A}(\pi_{K_I}),\\	
\Pi_{\mathrm{an},r_I,I,\widetilde{J},\breve{I\backslash J},A}(\pi_{K_I}):=\varprojlim_{s_I} \Pi_{[s_I,r_I],I,\widetilde{J},\breve{I\backslash J},A}(\pi_{K_I}),\\
\Pi_{\mathrm{an},r_I,I,J,\widetilde{I\backslash J},A}(\pi_{K_I}):=\varprojlim_{s_I} \Pi_{[s_I,r_I],I,J,\widetilde{I\backslash J},A}(\pi_{K_I}),\\	
\Pi_{\mathrm{an},r_I,I,\breve{J},\widetilde{I\backslash J},A}(\pi_{K_I}):=\varprojlim_{s_I} \Pi_{[s_I,r_I],I,\breve{J},\widetilde{I\backslash J},A}(\pi_{K_I}),\\	
\Pi_{\mathrm{an},r_I,I,\widetilde{J},\widetilde{I\backslash J},A}(\pi_{K_I}):=\varprojlim_{s_I} \Pi_{[s_I,r_I],I,\widetilde{J},\widetilde{I\backslash J},A}(\pi_{K_I})	
\end{align} 
we define the corresponding complex $C^\bullet_{\psi_I,\Gamma_I}(M)$ to be the corresponding totalization of $C^\bullet_{\psi_I}C^\bullet_{\Gamma_I}(M)$.
For any $(\varphi_I,\Gamma_I)$-module $M$ over 
\begin{align}
\Pi_{[s_I,r_I],I,\breve{J},I\backslash J,A}(\pi_{K_I}),\\	
\Pi_{[s_I,r_I],I,\widetilde{J},I\backslash J,A}(\pi_{K_I}),\\
\Pi_{[s_I,r_I],I,J,\breve{I\backslash J},A}(\pi_{K_I}),\\	
\Pi_{[s_I,r_I],I,\breve{J},\breve{I\backslash J},A}(\pi_{K_I}),\\
\Pi_{[s_I,r_I],I,\widetilde{J},\breve{I\backslash J},A}(\pi_{K_I}),\\
\Pi_{[s_I,r_I],I,J,\widetilde{I\backslash J},A}(\pi_{K_I}),\\	
\Pi_{[s_I,r_I],I,\breve{J},\widetilde{I\backslash J},A}(\pi_{K_I}),\\
\Pi_{[s_I,r_I],I,\widetilde{J},\widetilde{I\backslash J},A}(\pi_{K_I})	
\end{align}	
we define the corresponding complex $C^\bullet_{\psi_I,\Gamma_I}(M)$ to be the corresponding totalization of $C^\bullet_{\psi_I}C^\bullet_{\Gamma_I}(M)$. Here we assume $0< s_\alpha \leq r_\alpha/p$ for each $\alpha\in I$.	
	
\end{definition}

\newpage

\section{Cohomologies of $B_I$-pairs and Mixed-Type Objects over Rigid Analytic Affinoids in Mixed-characteristic Case}

\subsection{Partial $(\varphi_I,\Gamma_I)$-Cohomology and Partial $(\psi_I,\Gamma_I)$-Cohomology}

\noindent Now we define the corresponding cohomologies of the multivariate $(\varphi_I,\Gamma_I)$-modules over the following two groups of rings:

\begin{align}
\varprojlim_{s_I}  B^+_{\mathrm{dR},I'}	\widehat{\otimes}\Pi_{[s_I,r_I],I,?,?',A},?=J,\widetilde{J},\breve{J},?'=I\backslash J,\widetilde{I\backslash J},\breve{I\backslash J}\\
\end{align}

with

\begin{align}
\varprojlim_{s_I} B^+_{\mathrm{dR},I'}	\widehat{\otimes}\Pi_{[s_I,r_I],I,?,?',A}[t_1^{-1},...,t_{I'}^{-1}],?=J,\widetilde{J},\breve{J},?'=I\backslash J,\widetilde{I\backslash J},\breve{I\backslash J},\\
\end{align}

with 

\begin{align}
\varprojlim_{s_I} B_{e,I'}	\widehat{\otimes}\Pi_{[s_I,r_I],I,?,?',A}, ?=J,\widetilde{J},\breve{J},?'=I\backslash J,\widetilde{I\backslash J},\breve{I\backslash J},\\
\end{align}

and

\begin{align}
B^+_{\mathrm{dR},I'}	\widehat{\otimes}\Pi_{[s_I,r_I],I,?,?',A},?=J,\widetilde{J},\breve{J},?'=I\backslash J,\widetilde{I\backslash J},\breve{I\backslash J}
\end{align}

with

\begin{align}
B^+_{\mathrm{dR},I'}	\widehat{\otimes}\Pi_{[s_I,r_I],I,?,?',A}[t_1^{-1},...,t_{I'}^{-1}],?=J,\widetilde{J},\breve{J},?'=I\backslash J,\widetilde{I\backslash J},\breve{I\backslash J}
\end{align}

with 

\begin{align}
B_{e,I'}	\widehat{\otimes}\Pi_{[s_I,r_I],I,?,?',A},?=J,\widetilde{J},\breve{J},?'=I\backslash J,\widetilde{I\backslash J},\breve{I\backslash J}.
\end{align}

\begin{definition} \mbox{\bf{(After KPX, \cite[Definition 2.3.3]{KPX})}}
We define by induction the corresponding $\varphi_I$-complex $C^\bullet_{\varphi_I}$ of a corresponding $B_{I'}$-$(\varphi_I,\Gamma_I)$-module $M$ over the first three groups of rings
to be the corresponding totalization of the following complex:
\[
\xymatrix@C+0pc@R+0pc{
0 \ar[r] \ar[r] \ar[r] &C^\bullet_{\varphi_{I\backslash |I|}}(M_{...,r_{|I|}}) \ar[r]^{\varphi_{|I|}-1}\ar[r]\ar[r] &C^\bullet_{\varphi_{I\backslash |I|}}(M_{...,r_{|I|}/p}) \ar[r] \ar[r] \ar[r] &0
}
\]
as long as $C^\bullet_{\varphi_{I\backslash |I|}}(M)$ is constructed. We define by induction the corresponding $\varphi_I$-complex $C^\bullet_{\varphi_I}(M)$ of a corresponding $B_{I'}$-$(\varphi_I,\Gamma_I)$-module $M$ over the second three groups of rings to be the corresponding totalization of the following complex:
\[
\xymatrix@C+0pc@R+0pc{
0 \ar[r] \ar[r] \ar[r] &C^\bullet_{\varphi_{I\backslash |I|}}(M_{...,[s_{|I|},r_{|I|}]}) \ar[r]^{\varphi_{|I|}-1}\ar[r]\ar[r] &C^\bullet_{\varphi_{I\backslash |I|}}(M_{...,[s_{|I|},r_{|I|}/p]}) \ar[r] \ar[r] \ar[r] &0
}
\]
as long as $C^\bullet_{\varphi_{I\backslash |I|}}(M)$ is constructed. Here we assume $0< s_\alpha \leq r_\alpha/p$ for each $\alpha\in I$.	
\end{definition}

\begin{definition} \mbox{\bf{(After KPX, \cite[Definition 2.3.3]{KPX})}}
We define by induction the corresponding $\psi_I$-complex $C^\bullet_{\psi_I}$ of a corresponding $B_{I'}$-$(\varphi_I,\Gamma_I)$-module $M$ over the first three groups of rings 
to be the corresponding totalization of the following complex:
\[
\xymatrix@C+0pc@R+0pc{
0 \ar[r] \ar[r] \ar[r] &C^\bullet_{\psi_{I\backslash |I|}}(M_{...,r_{|I|}}) \ar[r]^{\psi_{|I|}-1}\ar[r]\ar[r] &C^\bullet_{\psi_{I\backslash |I|}}(M_{...,pr_{|I|}}) \ar[r] \ar[r] \ar[r] &0
}
\]
as long as $C^\bullet_{\psi_{I\backslash |I|}}(M)$ is constructed. We define by induction the corresponding $\psi_I$-complex $C^\bullet_{\psi_I}(M)$ of a corresponding $B_{I'}$-$(\varphi_I,\Gamma_I)$-module $M$ over the second three groups of rings
to be the corresponding totalization of the following complex:
\[
\xymatrix@C+0pc@R+0pc{
0 \ar[r] \ar[r] \ar[r] &C^\bullet_{\psi_{I\backslash |I|}}(M_{...,[s_{|I|},r_{|I|}]}) \ar[r]^{\psi_{|I|}-1}\ar[r]\ar[r] &C^\bullet_{\psi_{I\backslash |I|}}(M_{...,[ps_{|I|},r_{|I|}]}) \ar[r] \ar[r] \ar[r] &0
}
\]
as long as $C^\bullet_{\psi_{I\backslash |I|}}(M)$ is constructed. Here we assume $0< s_\alpha \leq r_\alpha/p$ for each $\alpha\in I$.	
\end{definition}

\begin{definition} \mbox{\bf{(After KPX, \cite[Definition 2.3.3]{KPX})}}
We define by induction the corresponding $\Gamma_I$-complex $C^\bullet_{\Gamma_I}$ of a corresponding $B_{I'}$-$(\varphi_I,\Gamma_I)$-module $M$ over 
the first three groups of rings
to be the corresponding totalization of the following complex:
\[
\xymatrix@C+0pc@R+0pc{
0 \ar[r] \ar[r] \ar[r] &C^\bullet_{\Gamma_{I\backslash |I|}}(M) \ar[r]^{\gamma_{|I|}-1}\ar[r]\ar[r] &C^\bullet_{\Gamma_{I\backslash |I|}}(M) \ar[r] \ar[r] \ar[r] &0
}
\]
as long as $C^\bullet_{\Gamma_{I\backslash |I|}}(M)$ is constructed. We define by induction the corresponding $\Gamma_I$-complex $C^\bullet_{\Gamma_I}(M)$ of a corresponding $B_{I'}$-$(\varphi_I,\Gamma_I)$-module $M$ over
the second three groups of rings 	
to be the corresponding totalization of the following complex:
\[
\xymatrix@C+0pc@R+0pc{
0 \ar[r] \ar[r] \ar[r] &C^\bullet_{\Gamma_{I\backslash |I|}}(M) \ar[r]^{\gamma_{|I|}-1}\ar[r]\ar[r] &C^\bullet_{\Gamma_{I\backslash |I|}}(M) \ar[r] \ar[r] \ar[r] &0
}
\]
as long as $C^\bullet_{\Gamma_{I\backslash |I|}}(M)$ is constructed. Here we assume $0< s_\alpha \leq r_\alpha/p$ for each $\alpha\in I$.	
\end{definition}

\begin{definition} \mbox{\bf{(After KPX, \cite[Definition 2.3.3]{KPX})}}
For any $B_{I'}$-$(\varphi_I,\Gamma_I)$-module $M$ over 
the first three groups of rings 
we define the corresponding complex $C^\bullet_{\varphi_I,\Gamma_I}(M)$ to be the corresponding totalization of $C^\bullet_{\varphi_I}C^\bullet_{\Gamma_I}(M)$.
For any $(\varphi_I,\Gamma_I)$-module $M$ over 
the second three groups of rings
we define the corresponding complex $C^\bullet_{\varphi_I,\Gamma_I}(M)$ to be the corresponding totalization of $C^\bullet_{\varphi_I}C^\bullet_{\Gamma_I}(M)$. Here we assume $0< s_\alpha \leq r_\alpha/p$ for each $\alpha\in I$.	
	
\end{definition}

\begin{definition} \mbox{\bf{(After KPX, \cite[Definition 2.3.3]{KPX})}}
For any $B_{I'}$-$(\varphi_I,\Gamma_I)$-module $M$ over 
the first three groups of rings
we define the corresponding complex $C^\bullet_{\psi_I,\Gamma_I}(M)$ to be the corresponding totalization of $C^\bullet_{\psi_I}C^\bullet_{\Gamma_I}(M)$.
For any $B_{I'}$-$(\varphi_I,\Gamma_I)$-module $M$ over 
the second three groups of rings
we define the corresponding complex $C^\bullet_{\psi_I,\Gamma_I}(M)$ to be the corresponding totalization of $C^\bullet_{\psi_I}C^\bullet_{\Gamma_I}(M)$. Here we assume $0< s_\alpha \leq r_\alpha/p$ for each $\alpha\in I$.	
	
\end{definition}

\subsection{Partial $B_{I'}$-Cohomology}

\noindent We now define the corresponding partial $B_{I'}$-cohomology  in the situation

\begin{definition}  \mbox{\bf{(After Nakamura, \cite[Appendix 5]{Nak1})}} 
For any $B_{I'}$-$(\varphi_I,\Gamma_I)$-module $M$ over 
\begin{align}
\varprojlim_{s_I}  B^+_{\mathrm{dR},I'}	\widehat{\otimes}\Pi_{[s_I,r_I],I,?,?',A},?=J,\widetilde{J},\breve{J},?'=I\backslash J,\widetilde{I\backslash J},\breve{I\backslash J},\\
\end{align}

with

\begin{align}
\varprojlim_{s_I} B^+_{\mathrm{dR},I'}	\widehat{\otimes}\Pi_{[s_I,r_I],I,?,?',A}[t_1^{-1},...,t_{I'}^{-1}],?=J,\widetilde{J},\breve{J},?'=I\backslash J,\widetilde{I\backslash J},\breve{I\backslash J},\\
\end{align}

with 

\begin{align}
\varprojlim_{s_I} B_{e,I'}	\widehat{\otimes}\Pi_{[s_I,r_I],I,?,?',A},?=J,\widetilde{J},\breve{J},?'=I\backslash J,\widetilde{I\backslash J},\breve{I\backslash J},\\
\end{align}

and

\begin{align}
B^+_{\mathrm{dR},I'}	\widehat{\otimes}\Pi_{[s_I,r_I],I,J,I\backslash J,A},\\	
B^+_{\mathrm{dR},I'}	\widehat{\otimes}\Pi_{[s_I,r_I],I,\breve{J},I\backslash J,A},\\	
B^+_{\mathrm{dR},I'}	\widehat{\otimes}\Pi_{[s_I,r_I],I,\widetilde{J},I\backslash J,A},\\
B^+_{\mathrm{dR},I'}	\widehat{\otimes}\Pi_{[s_I,r_I],I,J,\breve{I\backslash J},A},\\	
B^+_{\mathrm{dR},I'}	\widehat{\otimes}\Pi_{[s_I,r_I],I,\breve{J},\breve{I\backslash J},A},\\
B^+_{\mathrm{dR},I'}	\widehat{\otimes}\Pi_{[s_I,r_I],I,\widetilde{J},\breve{I\backslash J},A},\\
B^+_{\mathrm{dR},I'}	\widehat{\otimes}\Pi_{[s_I,r_I],I,J,\widetilde{I\backslash J},A},\\	
B^+_{\mathrm{dR},I'}	\widehat{\otimes}\Pi_{[s_I,r_I],I,\breve{J},\widetilde{I\backslash J},A},\\
B^+_{\mathrm{dR},I'}	\widehat{\otimes}\Pi_{[s_I,r_I],I,\widetilde{J},\widetilde{I\backslash J},A}
\end{align}

with

\begin{align}
B^+_{\mathrm{dR},I'}	\widehat{\otimes}\Pi_{[s_I,r_I],I,J,I\backslash J,A}[t_1^{-1},...,t_{I'}^{-1}],\\	
B^+_{\mathrm{dR},I'}	\widehat{\otimes}\Pi_{[s_I,r_I],I,\breve{J},I\backslash J,A}[t_1^{-1},...,t_{I'}^{-1}],\\	
B^+_{\mathrm{dR},I'}	\widehat{\otimes}\Pi_{[s_I,r_I],I,\widetilde{J},I\backslash J,A}[t_1^{-1},...,t_{I'}^{-1}],\\
B^+_{\mathrm{dR},I'}	\widehat{\otimes}\Pi_{[s_I,r_I],I,J,\breve{I\backslash J},A}[t_1^{-1},...,t_{I'}^{-1}],\\	
B^+_{\mathrm{dR},I'}	\widehat{\otimes}\Pi_{[s_I,r_I],I,\breve{J},\breve{I\backslash J},A}[t_1^{-1},...,t_{I'}^{-1}],\\
B^+_{\mathrm{dR},I'}	\widehat{\otimes}\Pi_{[s_I,r_I],I,\widetilde{J},\breve{I\backslash J},A}[t_1^{-1},...,t_{I'}^{-1}],\\
B^+_{\mathrm{dR},I'}	\widehat{\otimes}\Pi_{[s_I,r_I],I,J,\widetilde{I\backslash J},A}[t_1^{-1},...,t_{I'}^{-1}],\\	
B^+_{\mathrm{dR},I'}	\widehat{\otimes}\Pi_{[s_I,r_I],I,\breve{J},\widetilde{I\backslash J},A}[t_1^{-1},...,t_{I'}^{-1}],\\
B^+_{\mathrm{dR},I'}	\widehat{\otimes}\Pi_{[s_I,r_I],I,\widetilde{J},\widetilde{I\backslash J},A}[t_1^{-1},...,t_{I'}^{-1}],
\end{align}

with 

\begin{align}
B_{e,I'}	\widehat{\otimes}\Pi_{[s_I,r_I],I,J,I\backslash J,A},\\	
B_{e,I'}	\widehat{\otimes}\Pi_{[s_I,r_I],I,\breve{J},I\backslash J,A},\\	
B_{e,I'}	\widehat{\otimes}\Pi_{[s_I,r_I],I,\widetilde{J},I\backslash J,A},\\
B_{e,I'}	\widehat{\otimes}\Pi_{[s_I,r_I],I,J,\breve{I\backslash J},A},\\	
B_{e,I'}	\widehat{\otimes}\Pi_{[s_I,r_I],I,\breve{J},\breve{I\backslash J},A},\\
B_{e,I'}	\widehat{\otimes}\Pi_{[s_I,r_I],I,\widetilde{J},\breve{I\backslash J},A},\\
B_{e,I'}	\widehat{\otimes}\Pi_{[s_I,r_I],I,J,\widetilde{I\backslash J},A},\\	
B_{e,I'}	\widehat{\otimes}\Pi_{[s_I,r_I],I,\breve{J},\widetilde{I\backslash J},A},\\
B_{e,I'} \widehat{\otimes}\Pi_{[s_I,r_I],I,\widetilde{J},\widetilde{I\backslash J},A}
\end{align}
we define the corresponding $B_{I'}$-complex $C^\bullet_{B_{I'}}(M)$ to be 
\[
\xymatrix@C+0pc@R+0pc{
0 \ar[r] \ar[r] \ar[r] &{*}_1 \times {*}_2  \ar[r]^{?-?'}\ar[r]\ar[r] &{*}_3\ar[r] \ar[r] \ar[r] &0,
}
\]
where
\begin{align}
*_1:=C^\bullet(\mathrm{Gal}_{\mathbb{Q}_p,1}\times...\times \mathrm{Gal}_{\mathbb{Q}_p,I'}, M^+_\mathrm{dR}),\\
*_2:=C^\bullet(\mathrm{Gal}_{\mathbb{Q}_p,1}\times...\times \mathrm{Gal}_{\mathbb{Q}_p,I'}, M_e),\\
*_3:=C^\bullet(\mathrm{Gal}_{\mathbb{Q}_p,1}\times...\times \mathrm{Gal}_{\mathbb{Q}_p,I'}, M_\mathrm{dR}).	
\end{align}

\end{definition}

\begin{definition}
Now we consider any $B_{I'}$-$(\varphi_I,\Gamma_I)$-module $M$ over 

\begin{align}
\varprojlim_{s_I}  B^+_{\mathrm{dR},I'}	\widehat{\otimes}\Pi_{[s_I,r_I],I,?,?',A},?=J,\widetilde{J},\breve{J},?'=I\backslash J,\widetilde{I\backslash J},\breve{I\backslash J}\\
\end{align}

with

\begin{align}
\varprojlim_{s_I} B^+_{\mathrm{dR},I'}	\widehat{\otimes}\Pi_{[s_I,r_I],I,?,?',A}[t_1^{-1},...,t_{I'}^{-1}],?=J,\widetilde{J},\breve{J},?'=I\backslash J,\widetilde{I\backslash J},\breve{I\backslash J},\\
\end{align}

with 

\begin{align}
\varprojlim_{s_I} B_{e,I'}	\widehat{\otimes}\Pi_{[s_I,r_I],I,?,?',A}, ?=J,\widetilde{J},\breve{J},?'=I\backslash J,\widetilde{I\backslash J},\breve{I\backslash J},\\
\end{align}

and

\begin{align}
B^+_{\mathrm{dR},I'}	\widehat{\otimes}\Pi_{[s_I,r_I],I,?,?',A},?=J,\widetilde{J},\breve{J},?'=I\backslash J,\widetilde{I\backslash J},\breve{I\backslash J}
\end{align}

with

\begin{align}
B^+_{\mathrm{dR},I'}	\widehat{\otimes}\Pi_{[s_I,r_I],I,?,?',A}[t_1^{-1},...,t_{I'}^{-1}],?=J,\widetilde{J},\breve{J},?'=I\backslash J,\widetilde{I\backslash J},\breve{I\backslash J}
\end{align}

with 

\begin{align}
B_{e,I'}	\widehat{\otimes}\Pi_{[s_I,r_I],I,?,?',A},?=J,\widetilde{J},\breve{J},?'=I\backslash J,\widetilde{I\backslash J},\breve{I\backslash J}
\end{align}

Then we can define the corresponding $C^\bullet_{B_{I'},\varphi_I,\Gamma_I}$-cohomology complex by taking the corresponding totalization of the corresponding double complex $C^\bullet_{B_{I'}}C^\bullet_{\varphi_I,\Gamma_I}(M)$.
\end{definition}

\newpage

\section{The Results on the Cohomologies}

\subsection{Comparisons for $C^\bullet_{\varphi_I,\Gamma_I},C^\bullet_{\psi_I,\Gamma_I},C^\bullet_{\psi_I}$}

\noindent We now consider the following categories for $A$ a rigid affinoid over $\mathbb{Q}_p$:\\

\noindent A. The corresponding category of all the $(\varphi_I,\Gamma_I)$-modules over the corresponding rings carrying the corresponding cohomologies $C^\bullet_{(\varphi_I,\Gamma_I)},C^\bullet_{(\psi_I,\Gamma_I)},C^\bullet_{\psi_I}$ (for sufficiently small $r_{I,0}$): \\

\begin{align}
\Pi_{\mathrm{an},r_{I,0},I,\breve{J},I\backslash J,A}(\pi_{K_I}):=\varprojlim_{s_I}\Pi_{[s_I,r_{I,0}],I,\breve{J},I\backslash J,A}(\pi_{K_I}),\\	
\Pi_{\mathrm{an},r_{I,0},I,\widetilde{J},I\backslash J,A}(\pi_{K_I}):=\varprojlim_{s_I} \Pi_{[s_I,r_{I,0}],I,\widetilde{J},I\backslash J,A}(\pi_{K_I}),\\
\Pi_{\mathrm{an},r_{I,0},I,J,\breve{I\backslash J},A}(\pi_{K_I}):=\varprojlim_{s_I}\Pi_{[s_I,r_{I,0}],I,J,\breve{I\backslash J},A}(\pi_{K_I}),\\	
\Pi_{\mathrm{an},r_{I,0},I,\breve{J},\breve{I\backslash J},A}(\pi_{K_I}):=\varprojlim_{s_I} \Pi_{[s_I,r_{I,0}],I,\breve{J},\breve{I\backslash J},A}(\pi_{K_I}),\\	
\Pi_{\mathrm{an},r_{I,0},I,\widetilde{J},\breve{I\backslash J},A}(\pi_{K_I}):=\varprojlim_{s_I} \Pi_{[s_I,r_{I,0}],I,\widetilde{J},\breve{I\backslash J},A}(\pi_{K_I}),\\
\Pi_{\mathrm{an},r_{I,0},I,J,\widetilde{I\backslash J},A}(\pi_{K_I}):=\varprojlim_{s_I} \Pi_{[s_I,r_{I,0}],I,J,\widetilde{I\backslash J},A}(\pi_{K_I}),\\	
\Pi_{\mathrm{an},r_{I,0},I,\breve{J},\widetilde{I\backslash J},A}(\pi_{K_I}):=\varprojlim_{s_I} \Pi_{[s_I,r_{I,0}],I,\breve{J},\widetilde{I\backslash J},A}(\pi_{K_I}),\\	
\Pi_{\mathrm{an},r_{I,0},I,\widetilde{J},\widetilde{I\backslash J},A}(\pi_{K_I}):=\varprojlim_{s_I} \Pi_{[s_I,r_{I,0}],I,\widetilde{J},\widetilde{I\backslash J},A}(\pi_{K_I}).	
\end{align}

\noindent B. The corresponding category of all the $(\varphi_I,\Gamma_I)$-modules over the corresponding rings carrying the corresponding cohomologies $C^\bullet_{(\varphi_I,\Gamma_I)},C^\bullet_{(\psi_I,\Gamma_I)},C^\bullet_{\psi_I}$ (where $0<s_\alpha\leq r_\alpha/p\leq  r_{\alpha,0}$ for each $\alpha\in I$): \\

\begin{align}
\Pi_{[s_I,r_I],I,\breve{J},I\backslash J,A}(\pi_{K_I}),\\	
\Pi_{[s_I,r_I],I,\widetilde{J},I\backslash J,A}(\pi_{K_I}),\\
\Pi_{[s_I,r_I],I,J,\breve{I\backslash J},A}(\pi_{K_I}),\\	
\Pi_{[s_I,r_I],I,\breve{J},\breve{I\backslash J},A}(\pi_{K_I}),\\	
\Pi_{[s_I,r_I],I,\widetilde{J},\breve{I\backslash J},A}(\pi_{K_I}),\\
\Pi_{[s_I,r_I],I,J,\widetilde{I\backslash J},A}(\pi_{K_I}),\\	
\Pi_{[s_I,r_I],I,\breve{J},\widetilde{I\backslash J},A}(\pi_{K_I}),\\	
\Pi_{[s_I,r_I],I,\widetilde{J},\widetilde{I\backslash J},A}(\pi_{K_I}).	
\end{align}

\begin{theorem}
Let $I$ be a set of two elements. Let $M$ be some object over $\Pi_{\mathrm{an},r_{I,0},I,*,*,A}(\pi_{K_I})$ in the corresponding category $A$ and let $M_{[s_I,r_I]}$ be the corrresponding (under the horizontal equivalence of the categories for the rings of the same type) object over $\Pi_{[s_I,r_{I}],I,*,*,A}(\pi_{K_I})$ in the category $B$. Then we have the following quasi-isomorphisms:
\begin{align}
C^\bullet_{\varphi_I,\Gamma_I}(M)\overset{\sim}{\rightarrow}C^\bullet_{\varphi_I,\Gamma_I}(M_{[s_I,r_I]}), \\
C^\bullet_{\psi_I,\Gamma_I}(M)\overset{\sim}{\rightarrow}C^\bullet_{\psi_I,\Gamma_I}(M_{[s_I,r_I]}), \\
C^\bullet_{\psi_I}(M)\overset{\sim}{\rightarrow}C^\bullet_{\psi_I}(M_{[s_I,r_I]}), \\	
\end{align}
in the bounded derived category of $A$-modules $D^\flat(A)$.	
\end{theorem}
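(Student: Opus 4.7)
The plan is to proceed by induction on $|I|$, reducing to the single-variable case treated in \cite[Section 2.3]{KPX}, and to use Fr\'echet--Stein descent for the quasi-Stein rings built in Section~2. Throughout I exploit the already-established equivalence between $(\varphi_I,\Gamma_I)$-modules on the Fr\'echet--Stein ring $\Pi_{\mathrm{an},r_{I,0}}$ and compatible families of modules over each single band $[s_I,r_I]$, which is precisely the horizontal equivalence the statement invokes to relate $M$ and $M_{[s_I,r_I]}$.

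First I would unfold the iterated totalizations. For $|I|=2$, each of $C^\bullet_{\varphi_I,\Gamma_I}$, $C^\bullet_{\psi_I,\Gamma_I}$, $C^\bullet_{\psi_I}$ is the total complex of an explicit multi-layered complex whose edge maps are $\varphi_\alpha-1$, $\psi_\alpha-1$, and $\gamma_\alpha-1$ for $\alpha\in I$. The comparison map
\[
C^\bullet_{\varphi_I,\Gamma_I}(M) \longrightarrow C^\bullet_{\varphi_I,\Gamma_I}(M_{[s_I,r_I]})
\]
is induced termwise by the restriction maps between the Fr\'echet--Stein ring and its single-band quotients, and hence commutes with each such differential. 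Filtering by slice index and invoking the convergence of the associated spectral sequence of a bicomplex, it then suffices to show the comparison map induces a quasi-isomorphism one direction at a time, which is the single-variable statement of \cite{KPX}.

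Second, for a fixed direction $\alpha\in I$, the quasi-Stein axiom ensures that the Fr\'echet--Stein transition maps have dense image, so the natural map from the cohomology over the Fr\'echet--Stein ring to the inverse limit of the cohomologies over single bands is an isomorphism provided $R^1\varprojlim_{s_I}$ vanishes on each cohomology group in sight. Given this, one uses the partial Frobenius $\varphi_\alpha$ to transport between bands of the form $[s_\alpha,r_\alpha]$ and $[s_\alpha/p, r_\alpha/p]$ (this is the same $2^{|I|}$-uniform covering argument already invoked for the proofs of the comparison theorems in Section~4), which shows that the resulting inverse system of cohomologies is essentially constant in $D^\flat(A)$, so the limit agrees with the value at any single band. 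For the $\psi$-flavoured statements we invoke the Herr-style comparison identifying the cones $[\varphi_\alpha-1]$ and $[\psi_\alpha-1]$ in the derived category; for $C^\bullet_{\psi_I}$ the $\Gamma_I$-layer drops out and the same argument applies in purer form.

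The main obstacle is the vanishing of $R^1\varprojlim_{s_I}$ on the bicomplexes at each stage, i.e., the claim that passage to the derived inverse limit introduces no extra cohomology in $D^\flat(A)$. This reduces to showing strict compatibility and uniform bounds on the rank of the pseudocoherent layers across the Fr\'echet--Stein system, which is exactly what is provided by \cref{proposition2.18} and \cref{proposition2.19} applied to the underlying pseudocoherent sheaves. Once the strict Mittag--Leffler condition is in place, the spectral-sequence and partial-Frobenius transport arguments above conclude the proof simultaneously for all three complexes.
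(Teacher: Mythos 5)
Your proposal is correct in outline, but it takes a genuinely different route from the paper. The paper does not filter the totalized bicomplex or analyze $R^1\varprojlim$ at all: it introduces an intermediate family of rings $\Pi_{\mathrm{an},r_{1,0},[s_2,r_2],I,*,*,A}(\pi_{K_I})$ (Fr\'echet--Stein in the first variable, a single closed band in the second), and then transfers cohomology in two steps, each time using the already-established equivalence of module categories together with the interpretation of the relevant $(\varphi_\alpha,\Gamma_\alpha)$-cohomology groups as Yoneda extension groups, so that the isomorphism on cohomology is formal once the categorical equivalence is known; the full statement then follows by totalizing in the remaining variable. Your approach is instead a direct higher-dimensional transcription of the original KPX argument: unfold the totalization, reduce by a spectral sequence to one direction at a time, and compare the Fr\'echet--Stein cohomology with the inverse limit over bands, using partial Frobenius transport and the uniform-rank propositions (\cref{proposition2.18}, \cref{proposition2.19}) to kill $R^1\varprojlim$. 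What the paper's route buys is brevity and the avoidance of any Mittag--Leffler analysis, at the cost of relying on the Ext-group interpretation, which is most transparent for $C^\bullet_{\varphi_I,\Gamma_I}$ and is not spelled out for the $\psi$-flavoured complexes; what your route buys is a uniform, explicit treatment of all three complexes, at the cost of having to actually establish the strictness/vanishing of $R^1\varprojlim$ on each layer, which you correctly identify as the technical heart but only sketch. Do note that for $C^\bullet_{\psi_I}$ alone the Herr-style identification of the cones of $\varphi_\alpha-1$ and $\psi_\alpha-1$ is not directly applicable (that comparison is between the $(\varphi,\Gamma)$- and $(\psi,\Gamma)$-complexes), so your closing remark that the $\Gamma_I$-layer simply drops out needs to be replaced by a direct analysis of $\psi_\alpha-1$ on the inverse system; this is a repairable looseness rather than a fatal gap.
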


\begin{proof}
The proof relies on the following intermediate $(\varphi_I,\Gamma_I)$-modules. Let $I=\{1,2\}$, we now defined the following rings:
\begin{align}
\Pi_{\mathrm{an},r_{1,0},[s_2,r_2],I,\breve{J},I\backslash J,A}(\pi_{K_I}):=\varprojlim_{s_1}\Pi_{[s_1,r_{1,0}]\times[s_2,r_{2}],I,\breve{J},I\backslash J,A}(\pi_{K_I}),\\	
\Pi_{\mathrm{an},r_{1,0},[s_2,r_2],I,\widetilde{J},I\backslash J,A}(\pi_{K_I}):=\varprojlim_{s_1} \Pi_{[s_1,r_{1,0}]\times[s_2,r_{2}],I,\widetilde{J},I\backslash J,A}(\pi_{K_I}),\\
\Pi_{\mathrm{an},r_{1,0},[s_2,r_2],I,J,\breve{I\backslash J},A}(\pi_{K_I}):=\varprojlim_{s_1}\Pi_{[s_1,r_{1,0}]\times[s_2,r_{2}],I,J,\breve{I\backslash J},A}(\pi_{K_I}),\\	
\Pi_{\mathrm{an},r_{1,0},[s_2,r_2],I,\breve{J},\breve{I\backslash J},A}(\pi_{K_I}):=\varprojlim_{s_1} \Pi_{[s_1,r_{1,0}]\times[s_2,r_{2}],I,\breve{J},\breve{I\backslash J},A}(\pi_{K_I}),\\	
\Pi_{\mathrm{an},r_{1,0},[s_2,r_2],I,\widetilde{J},\breve{I\backslash J},A}(\pi_{K_I}):=\varprojlim_{s_1} \Pi_{[s_1,r_{1,0}]\times[s_2,r_{2}],I,\widetilde{J},\breve{I\backslash J},A}(\pi_{K_I}),\\
\Pi_{\mathrm{an},r_{1,0},[s_2,r_2],I,J,\widetilde{I\backslash J},A}(\pi_{K_I}):=\varprojlim_{s_1} \Pi_{[s_1,r_{1,0}]\times[s_2,r_{2}],I,J,\widetilde{I\backslash J},A}(\pi_{K_I}),\\	
\Pi_{\mathrm{an},r_{1,0},[s_2,r_2],I,\breve{J},\widetilde{I\backslash J},A}(\pi_{K_I}):=\varprojlim_{s_1} \Pi_{[s_1,r_{1,0}]\times[s_2,r_{2}],I,\breve{J},\widetilde{I\backslash J},A}(\pi_{K_I}),\\	
\Pi_{\mathrm{an},r_{1,0},[s_2,r_2],I,\widetilde{J},\widetilde{I\backslash J},A}(\pi_{K_I}):=\varprojlim_{s_1} \Pi_{[s_1,r_{1,0}]\times[s_2,r_{2}],I,\widetilde{J},\widetilde{I\backslash J},A}(\pi_{K_I}).	
\end{align}
This category serves as a corresponding intermediate category which factors through the corresponding original equivalence on the categories involved. From $A$ to this category we have the equivalence on the cohomology groups by considering the $(\varphi_2,\Gamma_2)$-module structure, then by regarding the corresponding $(\varphi_2,\Gamma_2)$-cohomology groups as Yoneda extension groups we have the isomorphism on $(\varphi_2,\Gamma_2)$-cohomology groups which further shows the equivalence on the full cohomology groups. Similarly one compares this category with the category $B$ to finish.

\end{proof}

\newpage

\subsection*{Acknowledgements}

This is the natural continuation of our previous work \cite{T1} after \cite{CKZ} and \cite{PZ}. One could easily feel that we are actually inspired now by many programs in $p$-adic analysis. The corresponding Iwasawa consideration is still a very inspiring point, however $p$-adic Langlands program also introduces some motivation. We would like to thank Professor Kedlaya for many helpful discussion along my study in these directions presented in this paper such as the higher dimensional period ring $B_{\mathrm{dR},I}$.

\newpage

\bibliographystyle{ams}

\end{document}